\title{Feynman Formula for Discrete-time Quantum Walks}
\author{Jean-Pierre Fouque}
\address{Department of Statistics and Applied Probability, South Hall, University of California, Santa Barbara, CA 93106, USA}
\email{fouque@pstat.ucsb.edu}
\thanks{Email for Fouque: \href{mailto:fouque@pstat.ucsb.edu}{fouque@pstat.ucsb.edu}}
\author{Tomoyuki Ichiba}
\address{Department of Statistics and Applied Probability, South Hall, University of California, Santa Barbara, CA 93106, USA}
\email{ichiba@pstat.ucsb.edu}
\thanks{Email for Ichiba: \href{mailto:ichiba@pstat.ucsb.edu}{ichiba@pstat.ucsb.edu}}
\author{Ka Lok Lam}
\address{Department of Statistics and Applied Probability, South Hall, University of California, Santa Barbara, CA 93106, USA}
\email{kalok@ucsb.edu}
\thanks{Email for Lam: \href{mailto:kalok@ucsb.edu}{kalok@ucsb.edu}}
\renewcommand{\cite}{\citep}
\subjclass[2020]{60F05, 60J10, 81-10, 81Q05}
\keywords{Quantum walks, Markov additive process, Feynman Formula, Quantum transport PDEs}
\begin{document}

\begin{abstract}
We explicitly connect (discrete-time) quantum walks on $\Z$ with a four-state Markov additive process via a Feynman-type formula \eqref{eqns: Feynman Formula}. Using this representation, we derive a relation between the spectral decomposition of the Markov additive process
and the limiting density of the homogeneous quantum walk. In addition,  we consider a space-time rescaling of quantum walks, which leads to a system of quantum transport PDEs in continuous time and space with a phase interaction term. Our probabilistic representation for this type of PDE offers an efficient Monte Carlo computational technique. 
\end{abstract}

\maketitle

\section{Introduction}
\label{sec: introduction}
There are two types of quantum walks, continuous-time and discrete-time quantum walks. In this paper, we only discuss discrete-time quantum walks which were introduced by \cite{Aharonov93}  and began to draw attention to the quantum computing community since the paper \cite{ABMVW_2001}. We refer readers to the numerous surveys \cite{Kempe_2003}, \cite{AE_2012}, \cite{Konno_Survey_2020}, \cite{ZhouReview_2021}, \cite{KadianGarhwalKumar2021CSR}, ordered from the earliest to the most recent, and the references therein for detailed development of the literature of quantum walks.

Despite being defined to quantize classical random walks, quantum walks are deterministic dynamics and were also termed ``quantum cellular automata" by \cite{Meyer_1996}; they are well-known to behave very differently from classical random walks. The characteristic example is its famous ballistic long-time behavior with a bimodal weak limit (see for example \cite{konno_2005}, \cite{Grimmett_2004}),  which is in big contrast to the diffusive behavior driven by the classical central limit theorem. Thus, it is not surprising that techniques for studying quantum walks are rather non-probabilistic. They include, but are not limited to,  path-counting and orthogonal polynomials \cite{konno_2005}, Fourier transform \cite{Grimmett_2004}  \cite{MS4-2018} \cite{Strauch_2006}, the energy method \cite{MS-2020} and matrix analysis \cite{CGMV_2012} (see also the survey \cite{Konno_Survey_2020}).

In addition, the ballistic behavior described in the previous paragraph is also reflected by the possibility of a hyperbolic space-time rescaling of the quantum walk to a system of transport PDEs with a phase interaction term.  This was first stated and termed ``quantum transport PDEs'' in \cite{Molfetta_Debbasch_2012} (see also \cite{MS4-2018} for discussions of this type of Dirac PDEs). We refer the reader to the recent book \cite{Molfetta_2024} and references therein for developments in Physics between quantum walks and transport PDEs.

Our main contribution is to write a Feynman-type representation for quantum walks, bringing in probabilistic techniques for studying them. With our Feynman formula, we explicitly recognize the amplitude of a quantum walk to be a conditional expectation of a classical four-state Markov additive process. Using the formula, we are able to relate the spectral decomposition of the Markov additive process and the limiting density of quantum walks. In addition, we provide a probabilistic representation for the  quantum transport PDE in terms of continuous-time scaling limits of the aforementioned four-state Markov additive process. This offers an efficient Monte Carlo method for such PDE.

Connections between classical processes and quantum walks have been widely studied in the literature. One direction is the problem of mimicking. Since quantum walks are unitary dynamics, taking modulus squares to the amplitudes of states in each step gives a flow of probability marginals if the walk is initialized with a normalized state. It has been studied in \cite{Montero2017UniversalGenerators} and \cite{YSCRHN22} how to match this flow of marginals with a time-inhomogeneous Markov chain. In addition, it is well-established in the continuous-time literature (see \cite{Carlen1984Conservative}) that flow of marginals induced by a Schr\"{o}dinger equation can be mimicked by a homogeneous continuous-time Markov process, called a conservative diffusion. In addition, to incorporate the effect of decoherence to quantum walks, \cite{Attal_2012} introduced open quantum walks; by repeatedly measuring and running an open quantum walk, one naturally obtains quantum trajectories, which are classical stochastic processes. See the survey article \cite{Sinayskiy_2019} and the introduction section in \cite{CarboneGirottiMelchor2022JSP} for details. 
Finally, by introducing a walk-type zeta function, \cite{konno_zeta_2012}, \cite{konno_crossover_2020}, \cite{KKS_2022} recognized universal principles underlying both quantum walks and classical walks with Markov increments so that the walks can be handled in a unifying scheme. We remark that the 4-state Markov additive process induced from our Feynman formulae does not come from any measurement procedure and we intend to use it as a technique to uncover properties of quantum walks instead of comparing with quantum walks under a unifying scheme. Therefore, we believe that the relation between classical processes and quantum walks that we propose is novel and useful.

The paper is structured as follows.
In Section \ref{sec: Feynmanformula}, we recall the definition of a one-dimensional quantum walk, derive our main Feynman formula for the rotation coin $e^{i\theta \sigma_1}$, and comment on potential Monte Carlo simulations. Next, in Section \ref{sec: weaklimit}, we derive a relation between the spectral decomposition of the Markov additive process
and the limiting density of the homogeneous quantum walk , and we demonstrate the derivation of the weak limit for the long-term behavior of quantum walks. In Section \ref{sec: telegraph}, we consider a space-time rescaling of quantum walks, which leads to a system of transport PDEs in continuous time and space, with a phase interaction term.  Monte Carlo simulations are very efficient to approximate the solution of such a system. Finally, in Section \ref{sec: extensions},  we propose extensions to our Feynman formula to more general quantum walks, which include more general homogeneous coins, time-dependent coins (see for example \cite{Banuls2006}) and site-dependent coins (see for example \cite{WojcikEtAl2004}, \cite{LindenSharam2009}, \cite{MS4-2018} and \cite{MS-2020}).

\section{Feynman formula for one-dimensional quantum walks}\label{sec: Feynmanformula}
\subsection{Notations}
For common notations in the quantum literature, see \cite{NielsenChuang_2010} and \cite{Hall_2013} for a more quantum computational and quantum mechanical account respectively. Here we mainly follow notations in \cite{Watrous_2018} with slight modifications.

Let $\H$ be a Hilbert space and denote  its  unit sphere  $S(\H):= \{x\in \H: \norm{x}_{\H}=1\}$. The space of bounded operators on $\H$ is denoted by $L(\H)$ and the space of unitary operators  by $U(\H)$. 

An alphabet is any set $\Sigma$ of countable cardinality. We denote 
\begin{align}
    \ell^2(\Sigma):= \{f:\Sigma\to \C\mid \norm{f}_{\ell_2}^2:= \sum_{x\in \Sigma}\abs{f(x)}^2<\infty \},
\end{align}
 the complex Hilbert space indexed by $\Sigma$ with inner product
\begin{align*}
    \inprod{f, g}:= \sum_{x\in \Sigma}f(x) \overline{g(x)},
\end{align*}
and standard orthonormal basis $\{e_i:x\mapsto \I_{x=i}\}_{i\in \Sigma}$; its respective dual basis is denoted by $\{e_{i}^*\}_{i\in \Sigma}$. If $\psi\in \ell_2(\Sigma)$, then we define $\{\psi(x)\}_{x\in \Sigma}$ via 
\begin{align*}
    \psi = \sum_{x\in \Sigma} \psi(x) e_x.
\end{align*}
Unless otherwise specified, any matrix representation of $L(\ell_2(\Sigma))$ is with respect to the standard basis $\{e_i\}_{i\in \Sigma}$. For the alphabet $\{\pm 1\}$, the ordered basis is always $\{e_1, e_{-1}\}$. Common unitary matrix representations for $L(\ell_2\{\pm 1\})$ include:
\begin{align}
    \sigma_1 :=
\begin{bmatrix}
    0 & 1\\
    1  & 0
\end{bmatrix}, \quad e^{i\theta\sigma_1}   = \begin{bmatrix}
    \cos \theta & i\sin \theta\\
    i\sin \theta & \cos \theta 
\end{bmatrix}\quad \mbox{for every $\theta\in \R$},
\end{align}
which is the Pauli-X matrix and its induced rotation matrix. Lastly, we recall the standard isomorphism result between $\ell^2(\Sigma_1)\otimes \ell^2(\Sigma_2)$ and $\ell^2(\Sigma_1\times \Sigma_2)$, and we denote the basis of both spaces by
$\{e_i e_j\}_{(i,j)\in \Sigma_1\times \Sigma_2}$, which is the same as $\{e_i \otimes e_j\}_{(i,j)\in \Sigma_1\times \Sigma_2}$ and $\{e_{i,j}\}_{(i,j)\in \Sigma_1\times \Sigma_2}$.

\subsection{One-dimensional quantum walks}
In this section we introduce the main object of the paper. 
\begin{definition}[Quantum walks on $\Z$]
 Quantum walks on $\Z$  are discrete-time dynamics of the form
\begin{align} \label{eq: def2.1}
    \psi_{n} = W\psi_{n-1}\quad \mbox{for}\quad n\geq 1; \quad \psi_0 \in L(\ell_2(\Z)\otimes \ell_2(\{\pm 1\})),
\end{align}
in which the walk operator $W:= SC\in L(\ell_2(\Z)\otimes \ell_2(\{\pm 1\}))$ is the composition of the coin operator $C = \sum_x e_xe_x^* \otimes C_x$ where $C_x\in U(\ell^2(\{\pm 1\}))$ are unitary for all $x\in \Z$ and the shift operator $S$, which is defined by $Se_x e_z = e_{x+z}e_z$ for every $x\in Z, z= \pm 1$. Note that the shift operator $S$ is also unitary and therefore $W$ itself is unitary.
\end{definition}
We remark that quantum walks are deterministic so we can visualize them as follows: suppose we have the initial state defined by $\psi_0 = \sum_x \alpha_x e_x e_1 + \sum_x \beta _x e_x e_{-1}$ where $\alpha_x, \beta_x\in \C$ for every $x\in \Z$. We represent the quantum walk as a line of vectors at each vertex of the integer lattice: 
\begin{align}
\begin{matrix}
\Z &\cdots & x-2 & x-1 & x & x+1 & x+2&\cdots \\[1em]
\begin{matrix}
e_
{1} \\[1em] e_{-1}
\end{matrix}
& \cdots &
\begin{pmatrix}
    \alpha_{x-2}\\[1em]
    \beta_{x-2}
\end{pmatrix}
& 
\begin{pmatrix}
    \alpha_{x-1}\\[1em]
    \beta_{x-1}
\end{pmatrix}
& 
\begin{pmatrix}
    \alpha_{x}\\[1em]
    \beta_{x}
\end{pmatrix}
& 
\begin{pmatrix}
    \alpha_{x+1}\\[1em]
    \beta_{x+1}
\end{pmatrix}
& 
\begin{pmatrix}
    \alpha_{x+2}\\[1em]
    \beta_{x+2}
\end{pmatrix}& \cdots \\[2em]
Coins &\cdots & C_{x-2} & C_{x-1} & C_x & C_{x+1} & C_{x+2}&\cdots 
\end{matrix}
\end{align}
where $C_x\in U(2)$ is the coin at position $x\in \Z$. 

If we denote the action of the coin by $
C_x\begin{pmatrix}
\alpha_x \\
\beta_x
\end{pmatrix}:=  \begin{pmatrix}
    \tilde \alpha_x \\
    \tilde \beta_x
\end{pmatrix}$, then, the shift step gives: 
\begin{align}
\begin{matrix}
\Z &\cdots & x-2 & x-1 & x & x+1 & x+2&\cdots\\[1em]
\begin{matrix}
e_1 \\[1em] e_{-1}
\end{matrix}
& \cdots &
\begin{pmatrix}
    \tilde \alpha_{x-3}\\[1em]
    \tilde \beta_{x-1}
\end{pmatrix}
& 
\begin{pmatrix}
    \tilde\alpha_{x-2}\\[1em]
    \tilde\beta_{x}
\end{pmatrix}
& 
\begin{pmatrix}
    \tilde\alpha_{x-1}\\[1em]
    \tilde\beta_{x_+1}
\end{pmatrix}
& 
\begin{pmatrix}
    \tilde\alpha_{x}\\[1em]
    \tilde\beta_{x+2}
\end{pmatrix}
& 
\begin{pmatrix}
    \tilde\alpha_{x+1}\\[1em]
    \tilde\beta_{x+3}
\end{pmatrix}& \cdots \\[2em]
Coins &\cdots & C_{x-2} & C_{x-1} & C_x & C_{x+1} & C_{x+2}&\cdots 
\end{matrix}
\end{align}
We remark that the shift operator acts on $(\tilde \alpha_x)_{x\in \Z}$ as a right shift, while acting on $(\tilde \beta_x)_{x\in \Z}$ as a left shift. Next we provide a concrete example.

 \begin{example} \label{ex: QWex2}
With the homogeneous rotation coin $C_{x} \equiv e^{i (\pi/4) \sigma_{1} }=
\frac{1}{\sqrt{2}}\begin{bmatrix}
    1 & i\\
    i & 1
\end{bmatrix}
$  for all $x \in \mathbb Z$  and the normalized initial condition $\psi_{0}(0,-1) = \psi_{0}(0, 1) = 1 / \sqrt{2}$, $\psi_0(x,\pm 1) \equiv 0$ for all $x \neq 0$, the first two steps of this quantum walk is represented in  Figure \ref{fig: quantum_walk_exmple}.

\end{example}

 \usetikzlibrary{arrows.meta, bending,positioning}
  \usetikzlibrary {fadings,patterns}
\tikzfading[name=fade out,
            inner color=transparent!60,
            outer color=transparent!100]
 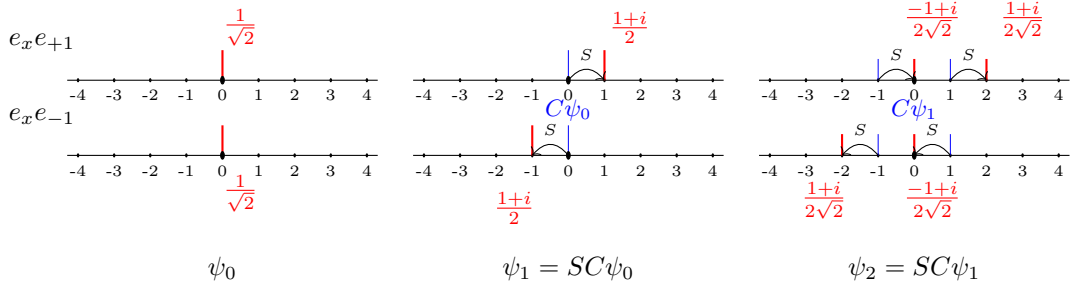
\begin{figure}
 \begin{center}
 \begin{tabular}{ccc}
\begin{tikzpicture}[xscale=0.48]
\draw[-] (-4.3,0) -- (4.3,0);
\draw[-] (-4.3,-1)--(4.3,-1);
\foreach \x in {-4,...,4}
\draw (\x,0) circle(.02);
\foreach \x in {-4,...,4}
\node[black] at (\x, -0.2) {\tiny\x};
\foreach \x in {-4,...,4}
\draw (\x,-1) circle(.02);
\foreach \x in {-4,...,4} 
\node[black] at (\x, -1.2) {\tiny\x};
\draw[thick, red] (0,0) -- (0,0.4);      
\node[red] at (0.5, 0.7) {$\frac{1}{\sqrt{2}}$};
\draw[thick, red] (0, -1) -- (0,-0.6);
\node[red] at (0.5, -1.5) {$\frac{1}{\sqrt{2}}$};
\node at (0,-2.5) {$\psi_{0}$};
\draw[fill] (0,0) circle(.05);
\draw[fill] (0,-1) circle(.05);
\node at (-5,.5) {$e_{x} e_{+1}$};
\node at (-5,-.5) {$e_{x} e_{-1}$};
\end{tikzpicture} & 
\begin{tikzpicture}[xscale=0.48]
\draw[-] (-4.3,0) -- (4.3,0);
\draw[-] (-4.3,-1)--(4.3,-1);
\foreach \x in {-4,...,4}
\draw (\x,0) circle(.02);
\foreach \x in {-4,...,4}
\node[black] at (\x, -0.2) {\tiny\x};
\foreach \x in {-4,...,4}
\draw (\x,-1) circle(.02);
\foreach \x in {-4,...,4}
\node[black] at (\x, -1.2) {\tiny\x};
\draw[thick, red] (-1,-1) -- (-1,-0.6); 
\draw[ultra thin, blue] (0,-1) -- (0,-0.6);
\node[red] at (-1.5, -1.7) {$\frac{1+i}{2}$};
\draw[thick, red] (1, 0) -- (1,0.4);
\draw[ultra thin, blue] (0, 0) -- (0,0.4);
\node[red] at (1.7, 0.7) {$\frac{1+i}{2}$};
\node at (0,-2.5) {$\psi_{1} = SC\psi_{0}$};
\draw[fill] (0,0) circle(.05); 
\draw[fill] (0,-1) circle(.05);
\draw[->] (0,-1) to [bend right] node[above]{\tiny $S$} (-1,-1); 
\draw[->] (0,0) to [bend left] node[above]{\tiny $S$} (1,0);
\node[blue] at (0, -0.4) {\small $C\psi_{0}$};
\end{tikzpicture} & 
\begin{tikzpicture}[xscale=0.48]
\draw[-] (-4.3,0) -- (4.3,0);
\draw[-] (-4.3,-1)--(4.3,-1);
\foreach \x in {-4,...,4}
\draw (\x,0) circle(.02);
\foreach \x in {-4,...,4}
\node[black] at (\x, -0.2) {\tiny\x};
\foreach \x in {-4,...,4}
\draw (\x,-1) circle(.02);
\foreach \x in {-4,...,4}
\node[black] at (\x, -1.2) {\tiny\x};
\draw[thick, red] (0,0-1) -- (0,0.283-1); 
\draw[ultra thin, blue] (1,0-1) -- (1,0.283-1);
\node[red] at (0.5, 0.9-2.5) {$\frac{-1+i}{2\sqrt{2}}$};
\draw[->] (1,0-1) to [bend right] node[above]{\tiny $S$} (0,0-1);
\draw[thick, red] (-2,0-1) -- (-2, 0.283-1);
\draw[ultra thin, blue] (-1,0-1) -- (-1, 0.283-1);
\node[red] at (-2.5, 0.9-2.5) {$\frac{1+i}{2\sqrt{2}}$};
\draw[->] (-1,0-1) to [bend right] node[above]{\tiny $S$} (-2,0-1);

\draw[thick, red] (0, 0) -- (0,1-0.717);
\draw[ultra thin, blue] (-1, 0) -- (-1,1-0.717);
\node[red] at (0.5,-1+1.8) {$\frac{-1+i}{2\sqrt{2}}$};
\draw[->] (-1,1-1) to [bend left] node[above]{\tiny $S$} (0,1-1); 
\draw[thick, red] (2, 1-1) -- (2,1-0.717);
\draw[ultra thin, blue] (1, 1-1) -- (1,1-0.717);
\node[red] at (3, -1+1.8) {$\frac{1+i}{2\sqrt{2}}$};
\draw[->] (1,1-1) to [bend left] node[above]{\tiny $S$} (2,1-1);
\node at (0,-2.5) {$\psi_{2} = SC\psi_{1}$};
\draw[fill] (0,0) circle(.05); 
\draw[fill] (0,-1) circle(.05);
\node[blue] at (0, -0.4) {\small $C\psi_{1}$};
\end{tikzpicture} 
\end{tabular}
\caption{The mechanism of the one-dimensional quantum walk $ (\psi_{n} = W^{n} \psi_{0}, n = 0,1,2 )$ with the rotation coin $e^{i (\pi/4) \sigma_{1}} 
$ and the initial condition $\psi_{0}(0,-1) = \psi_{0}(0, 1) = 1 / \sqrt{2}$ in Example \ref{ex: QWex2}. Two coin states $z = -1$ and $z=+1$ are assigned for each position $x \in  \mathbb Z$. For each step, we apply the coin operator $C$ first and then the shift operator $S$. At $n = 2$, the state $C\psi_1$ has 4 non-zero components because the action of $C$ splits each component of $\psi_1$ into two.}  \label{fig: quantum_walk_exmple}
\end{center}
\end{figure}

\subsection{Deriving the Feynman formula}\label{sec: Feynman_Formula}
In this section we derive the Feynman formula we advertised. We begin with the Feynman-path representation: 
\begin{lemma}\label{lemma: Poisson-like_coefficients}
For a homogeneous quantum walk on the 1-D lattice with coin $C_x = e^{i\theta \sigma_1}$, we have 
\begin{align}\label{eqns: Poisson-like_coefficients}
    \psi_n(x, z)&= \sum_{k_1, \cdots, k_n =0}^\infty \frac{(i\theta)^{k_1+\cdots+k_n}}{k_1!\cdots k_n!} \psi_0(x_n, z_n),
\end{align}
where  $(x_n, z_n)$ are being defined by $z_m:= (-1)^{k_m}z_{m-1}$ and $x_m:= x_0-\sum_{j=0}^{m-1} z_j $ for $m\geq 1$ with $(x_0, z_0):=(x, z)$.
\end{lemma}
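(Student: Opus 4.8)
The plan is to isolate a single ``backward'' one-step recursion for the amplitudes $\psi_n(x,z)$ and then iterate it $n$ times by induction on $n$, carrying along the points $(x_m,z_m)$ the iteration generates. First I would expand the coin in its power series: since $\sigma_1^2=I$ one has $\sigma_1^k e_w = e_{(-1)^k w}$, hence
\begin{align*}
 C_y e_w \;=\; e^{i\theta\sigma_1}e_w \;=\; \sum_{k\geq 0}\frac{(i\theta)^k}{k!}\,e_{(-1)^k w},
\end{align*}
and applying the shift $S e_a e_b = e_{a+b}e_b$ gives $W e_y e_w = \sum_{k\geq 0}\frac{(i\theta)^k}{k!}\,e_{y+(-1)^k w}\,e_{(-1)^k w}$. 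Writing $\psi_{n-1}=\sum_{y,w}\psi_{n-1}(y,w)\,e_y e_w$, moving the bounded (unitary) operator $W$ inside the sum, and reading off the coefficient of $e_x e_z$ --- which forces $(-1)^k w = z$, that is $w=(-1)^k z$, together with $y=x-z$ --- yields the recursion
\begin{align}\label{eq: plan-onestep}
 \psi_n(x,z) \;=\; \sum_{k\geq 0}\frac{(i\theta)^k}{k!}\,\psi_{n-1}\bigl(x-z,\,(-1)^k z\bigr),\qquad n\geq 1.
\end{align}
Since $|\psi_{n-1}(\cdot,\cdot)|\leq \|\psi_{n-1}\|_{\ell^2}=\|\psi_0\|_{\ell^2}$ and $\sum_{k}|\theta|^k/k!=e^{|\theta|}$, every multiple series appearing below converges absolutely, which legitimizes all rearrangements.

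Next I would iterate \eqref{eq: plan-onestep}. Taking $k=k_1$ and setting $x_1:=x_0-z_0=x-z$, $z_1:=(-1)^{k_1}z_0=(-1)^{k_1}z$, the case $n=1$ of the claim is exactly \eqref{eq: plan-onestep}. For the inductive step, assuming the formula for $n-1$ with an arbitrary starting point, I apply it to $\psi_{n-1}(x_1,z_1)$, relabel its summation variables $k_2,\dots,k_n$, and compare the points $(\tilde x_m,\tilde z_m)$ it generates from $(\tilde x_0,\tilde z_0)=(x_1,z_1)$ with the points $(x_m,z_m)$ generated from $(x_0,z_0)=(x,z)$. From $\tilde z_m=(-1)^{k_{m+1}}\tilde z_{m-1}$ one gets $\tilde z_m=z_{m+1}$ by induction, and the telescoping computation
\begin{align*}
 \tilde x_m \;=\; \tilde x_0-\sum_{j=0}^{m-1}\tilde z_j \;=\; (x_0-z_0)-\sum_{j=1}^{m}z_j \;=\; x_0-\sum_{j=0}^{m}z_j \;=\; x_{m+1}
\end{align*}
then shows $\psi_0(\tilde x_{n-1},\tilde z_{n-1})=\psi_0(x_n,z_n)$. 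Multiplying the weight $(i\theta)^{k_1}/k_1!$ by the weight $(i\theta)^{k_2+\cdots+k_n}/(k_2!\cdots k_n!)$ furnished by the inductive hypothesis produces $(i\theta)^{k_1+\cdots+k_n}/(k_1!\cdots k_n!)$, and summing over $k_1$ as well gives \eqref{eqns: Poisson-like_coefficients}.

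The only genuine subtlety --- and the step I would check most carefully --- is this index bookkeeping in the inductive step. Because the representation is ``backward'' (the pair $(x_m,z_m)$ is obtained by running from $(x,z)$ at time $n$ down to time $n-m$), one must verify that performing one step of \eqref{eq: plan-onestep} followed by an $(n-1)$-step expansion reproduces precisely the shifted sequence $(x_{m+1},z_{m+1})$ with no off-by-one error; the telescoping identity above for $\tilde x_m$ and the parity identity $(-1)^{-k}=(-1)^k$ used in solving $(-1)^k w=z$ are exactly what make the two sequences coincide. Everything else is the routine power-series expansion of $e^{i\theta\sigma_1}$ together with the elementary action of the shift $S$, plus the absolute-convergence bound already noted.
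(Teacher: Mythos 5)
Your proof is correct and follows essentially the same route as the paper's: expand the coin $e^{i\theta\sigma_1}$ in its power series using $\sigma_1^2=I$, read off the one-step backward recursion $\psi_n(x,z)=\sum_k \frac{(i\theta)^k}{k!}\psi_{n-1}(x-z,(-1)^k z)$, and iterate. The paper simply states ``repeatedly applying'' the recursion, whereas you spell out the induction and the index bookkeeping (the shift $\tilde x_m=x_{m+1}$, $\tilde z_m=z_{m+1}$) explicitly and add the absolute-convergence justification; both additions are harmless and correct, but the underlying argument is identical.
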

\begin{proof}
    First, applying the walk operator on a basis element gives
\begin{align*}
   We_x e_z =  S e^{i\theta \sigma_1} e_xe_z = S \sum_{k=0}^\infty \frac{(i\theta)^k}{k!}e_x \sigma_1^k e_{ z}= \sum_{k=0}^\infty \frac{(i\theta)^k}{k!}e_{x+(-1)^k z} e_{(-1)^k z}. 
\end{align*}
Then for any state $\psi$, we obtain from 
\begin{align*}
    W\psi &= W \sum_{x\in \Z, z\in \{\pm 1\}} \psi(x, z) e_x e_z = \sum_{x\in \Z, z\in \{\pm 1\}, k\in \N} \frac{(i\theta)^k}{k!}\psi(x, z) e_{x+(-1)^k z} e_{(-1)^k z}\\
    &=\sum_{x\in \Z, z\in \{\pm 1\}, k\in \N} \frac{(i\theta)^k}{k!}\psi(x-z, (-1)^kz) e_{x} e_{z} \notag 
\end{align*}
that 
\begin{align}\label{eqns: one-step-proof}
    (W\psi)(x, z) = \sum_{k=0}^\infty \frac{(i\theta)^k}{k!}\psi(x-z, (-1)^k z).
\end{align}
 Repeatedly applying \eqref{eqns: one-step-proof} on $\psi_n:= W^n \psi_0$ where $n\geq 1$ gives 
\begin{align}
    \psi_n(x, z) = \sum_{k_1, \cdots , k_n}^\infty \frac{(i\theta)^{k_1+\cdots + k_n}}{k_1! \cdots k_n!} \psi(x-z-(-1)^{k_1}z - \cdots -(-1)^{k_1+\cdots +k_{n-1}}z, (-1)^{k_n}z)
\end{align}
which is exactly the required identity. 
\end{proof}

\begin{remark}
The coin $C = e^{i\theta \sigma_1}$ is periodic in $\theta$ with period $2\pi$. In the following we restrict $\theta\in (0,2\pi)$ where we exclude the trivial case $\theta=0\,\pmod{2\pi}$.
\end{remark}
\begin{remark}
    Note that  \eqref{eqns: Poisson-like_coefficients}
can be re-written as 
\begin{align}\label{eqns: Poisson-like-new}
    \psi_n(x_0, z_0)&= \sum_{k_1, \cdots, k_n =0}^\infty i^{k_1+\cdots+k_ n}\frac{\theta^{k_1}\cdots\theta^{k_n}}{k_1!\cdots k_n!} \psi_0(x_n, z_n).
\end{align}
The Poisson-like coefficients in \eqref{eqns: Poisson-like-new} and the fact that the factor $i^{k_1+\cdots + k_n}$ depends only on $(k_1+\cdots + k_n)\pmod 4$ motivate the following definition.
\end{remark}

\begin{definition}\label{defn: MA Model} 
Introducing a sequence of i.i.d. Poisson random variables $(N_1,N_2,\cdots)$ with parameter $\theta\in (0,2\pi)$, we define the Markov additive model\footnote{A Markov additive model is a triple that consists of a Markov chain, a real-valued function of it and a sum of that real-valued function of the Markov chain (see for instance \cite{NeyNummelin_1987a} and \cite{NeyNummelin_1987b}).} $(S_n, Z_n:= f(S_n), X_n)\in \{0, 1, 2, 3\} \times \{\pm 1\}\times\Z$ by
\begin{align}
    &S_0= 0, \quad S_n = \sum_{j=1}^n N_j \pmod{4} \quad \mbox{for}\quad n\geq 1,\\
    &Z_0=z,\quad Z_n=f(S_n)Z_0, \quad f(s):= (-1)^s  \quad\mbox{for} \quad s\in \{0, 1, 2, 3\} \quad\mbox{and}\quad n\geq 1,\\
    &X_0=x, \quad X_n=X_{n-1}-Z_{n-1}=\cdots=
    X_0 - \sum_{j=0}^{n-1}Z_j\quad\mbox{for}\quad n\geq 1.
\end{align}

\end{definition}

Our main result is as follows: 
\begin{theorem}[Feynman Formula]\label{thm: Feynman_Formula}
For the homogeneous quantum walk on the 1-D lattice with coin $C_x=  e^{i\theta \sigma_1}$ for all $x\in \Z$, we have: 
\begin{tcolorbox}
\vspace{-0.7em}
\begin{align}\label{eqns: Feynman Formula}
  \psi_n(x,z)&= e^{n\theta}\E\left[i^{S_n} \psi_0(X_n, Z_n)  \mid (S_0, Z_0, X_0) = (0, z, x)\right]
\end{align}
\end{tcolorbox}
\noindent for $(n,x, z) \in \mathbb N_0 \times \mathbb Z \times \{\pm 1\} $, where $(S_\cdot, Z_\cdot, X_\cdot)$ are 
in Definition \ref{defn: MA Model}. 
\end{theorem}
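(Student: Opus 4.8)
The plan is to derive \eqref{eqns: Feynman Formula} directly from the Feynman-path representation in Lemma~\ref{lemma: Poisson-like_coefficients} by recognizing the multinomial-type sum \eqref{eqns: Poisson-like-new} as an expectation over the i.i.d.\ Poisson vector $(N_1,\dots,N_n)$ defining the Markov additive model in Definition~\ref{defn: MA Model}. First I would fix $(n,x,z)$ and write out the joint probability mass function of $(N_1,\dots,N_n)$: since each $N_j\sim\mathrm{Poisson}(\theta)$ independently, we have $\P[N_1=k_1,\dots,N_n=k_n] = e^{-n\theta}\,\theta^{k_1+\cdots+k_n}/(k_1!\cdots k_n!)$. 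Thus the coefficient $\tfrac{\theta^{k_1}\cdots\theta^{k_n}}{k_1!\cdots k_n!}$ appearing in \eqref{eqns: Poisson-like-new} equals $e^{n\theta}\,\P[N_1=k_1,\dots,N_n=k_n]$, which is where the prefactor $e^{n\theta}$ in the Feynman formula comes from.

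Next I would match the remaining ingredients of the summand. On the event $\{N_j=k_j, j=1,\dots,n\}$, the Markov chain takes the values $S_m = k_1+\cdots+k_m \pmod 4$, so $i^{S_n} = i^{k_1+\cdots+k_n}$ (using that $i^{4}=1$, so the value only depends on the sum mod $4$, exactly as noted in the remark preceding Definition~\ref{defn: MA Model}). Similarly, starting from $(S_0,Z_0,X_0)=(0,z,x)$, the recursions $Z_m = (-1)^{S_m}Z_0$ and $X_m = X_0 - \sum_{j=0}^{m-1}Z_j$ produce precisely the pair $(X_n,Z_n)$ that, under the identification $k_m \leftrightarrow N_m$, coincides with the deterministic pair $(x_n,z_n)$ defined in Lemma~\ref{lemma: Poisson-like_coefficients}: indeed $z_m = (-1)^{k_m}z_{m-1}$ telescopes to $(-1)^{k_1+\cdots+k_m}z_0 = (-1)^{S_m}Z_0$, and the $x_m$ recursion matches the $X_m$ recursion term by term. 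Hence the summand $i^{k_1+\cdots+k_n}\psi_0(x_n,z_n)$ in \eqref{eqns: Poisson-like-new} equals the random variable $i^{S_n}\psi_0(X_n,Z_n)$ evaluated on the event $\{N_j=k_j\}$.

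Putting these two observations together, I would rewrite \eqref{eqns: Poisson-like-new} as
\begin{align*}
\psi_n(x,z) = e^{n\theta}\sum_{k_1,\dots,k_n=0}^{\infty} \P[N_1=k_1,\dots,N_n=k_n]\; i^{k_1+\cdots+k_n}\,\psi_0(x_n,z_n) = e^{n\theta}\,\E\!\left[i^{S_n}\psi_0(X_n,Z_n)\mid (S_0,Z_0,X_0)=(0,z,x)\right],
\end{align*}
which is exactly \eqref{eqns: Feynman Formula}. The only point requiring a little care is the justification of interchanging the (absolutely convergent) series with the expectation and confirming that the conditional expectation given the starting triple is genuinely a sum against the Poisson law of $(N_1,\dots,N_n)$ — this is immediate because, conditionally on $(S_0,Z_0,X_0)$, the whole trajectory $(S_m,Z_m,X_m)_{m\le n}$ is a deterministic function of $(N_1,\dots,N_n)$, whose joint law does not depend on the starting point. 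I do not anticipate a serious obstacle here; the main subtlety is purely bookkeeping: checking that the index map $k_m\mapsto N_m$ aligns the deterministic path $(x_m,z_m)$ of Lemma~\ref{lemma: Poisson-like_coefficients} with the stochastic path $(X_m,Z_m)$ of Definition~\ref{defn: MA Model}, including the correct handling of the "$\pmod 4$" in $S_n$ versus the un-reduced exponent $k_1+\cdots+k_n$ in $i^{k_1+\cdots+k_n}$, which is harmless since $i^4=1$.
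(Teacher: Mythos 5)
Your proposal is correct and takes essentially the same approach as the paper: starting from Lemma~\ref{lemma: Poisson-like_coefficients}, pulling out the factor $e^{n\theta}$ to expose the joint Poisson mass function, and identifying the summand with $i^{S_n}\psi_0(X_n,Z_n)$ evaluated on the event $\{N_j=k_j\}$. The paper's proof is more terse but is the identical argument; your extra bookkeeping (the telescoping of $z_m$, the harmlessness of the $\bmod\,4$ reduction since $i^4=1$) is exactly the content the paper leaves implicit.
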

  \begin{proof}
  Starting with the result in Lemma \ref{lemma: Poisson-like_coefficients}, and using Poisson distributions, we have 
\begin{align*}
  \psi_n(x_0,z_0)  &= \sum_{k_1, \cdots, k_n=0}^\infty \frac{(i\theta)^{k_1+\cdots+k_n}}{k_1!\cdots k_n!} \psi_0(x_n, z_n)\nonumber\\ 
    &= e^{n\theta}\sum_{k_1, \cdots, k_n=0}^\infty (i)^{k_1+\cdots+k_n}\frac{e^{-\theta}\theta^{k_1}\cdots
    e^{-\theta}\theta^{k_n}}{k_1!\cdots k_n!} \psi_0(x_n, z_n)\nonumber\\
   &= e^{n\theta}\E\left[i^{S_n} \psi_0(X_n, Z_n)  \mid (S_0, Z_0, X_0) = (0, z, x)\right],
\end{align*}
which is formula \eqref{eqns: Feynman Formula} for $n\geq 1$ with $x_0 = x$ and $z_0 = z$.  
\end{proof}

\subsection{Monte Carlo simulations}
\label{sec: MonteCarloSimulations}

It is very tempting to use our formula \eqref{eqns: Feynman Formula} to implement a Monte Carlo computation of the wave function $\psi_n$ or its square modulus to approximate the probability of presence. By brute force, we would generate many realizations of the Poisson random variables $(N_1^{(k)},\cdots,N_n^{(k)})$ for $k=1,\cdots,M$, and then obtain
\[
\psi_n(x,z)\approx e^{n\theta}\frac{1}{M}\sum_{k=1}^M i^{S^{(k)}_n}\psi_0(X^{(k)}_n,Z^{(k)}_n),
\]
where $(S^{(k)}_n,Z^{(k)}_n,X^{(k)}_n)$ are computed from $(N_1^{(k)},\cdots,N_n^{(k)})$ according to the formulas in Definition \ref{defn: MA Model}. Well, we tried and that doesn't work well unless using a huge unrealistic number of realizations even for a relatively small time $n$. The detailed study of the sample size to compute it with stochastic simulation methods that include importance sampling, e.g., \cite{MR3784496} is an ongoing project. In fact, we are in a comparable situation with the stochastic model for the telegrapher's equation described by the great Mark Kac in \cite{Kac_1974}. 
Here is what he wrote about implementing a Monte Carlo computation for a discrete time transport system:

\begin{quote}
    ``This is a completely ridiculous scheme. Because in order to have good accuracy $\Delta t$ had better be small. I don't know how small, but certainly you ought to discretize reasonably well. But that means the probability of reversing directions is very close to zero. It is a very unlikely event. This is the type of situation which is very difficult to handle by any kind of Monte Carlo technique. Because with such a small probability you would have to have an enormous number of particles. You would need a really ridiculous number. Otherwise, the fluctuation will be enormous."
\end{quote}

In fact, for our system, due to its Schr\"odinger nature, the  situation is even worse as cancellations between small probabilities are happening and crucial in getting the right answer. This will be made very precise in Section \ref{sec: weaklimit}. 

However, in this same article, Mark Kac suggests a rescaling of the system to pass to the continuous-time limit. He obtained a probabilistic representation of the solution of a transport system of Boltzmann type and observed that in this limiting situation, Monte Carlo technique makes perfect sense and is amazingly efficient. This has been the basis and widely used technique for solving transport equations with Boltzmann interactions (see \cite{Pardoux89}).

We will perform such a rescaling of our model to pass to the continuous-time limit and identify a limiting system of PDEs of Schr\"odinger transport type for which our probabilistic representation is an efficient Monte Carlo scheme. This will be presented in Section \ref{sec: telegraph}.

\section{Relation between quantum walks and spectral properties of Markov additive models}\label{sec: weaklimit}
In this section we concentrate on the case of a  quantum walk with homogeneous coin operator $C=I\otimes 
e^{i\theta\sigma_1}$. 
\subsection{Motivation - ballistic weak limit of quantum walk}
If we start the quantum walk described in Theorem \ref{thm: Feynman_Formula} with the initial state $\psi_0\in S(\ell^2(\Z\times \{\pm 1\}))$, that is $\sum_{x\in \Z}(\abs{\psi_0(x, 1)}^2 + \abs{\psi_0(x, -1)}^2) =1$, then also $\psi_n\in S(\ell^2(\Z\times \{\pm 1\}))$ since the walk operator $W = SC$ is  unitary. It follows that we can define random variables $\Xi_n $ supported on $\Z$ with probability mass functions defined by
\begin{align} \label{eq: Psin}
    \P(\Xi_n = x) = \abs{\psi_n(x, 1)}^2 + \abs{\psi_n(x, -1)}^2, \quad x \in \mathbb Z,\quad n \ge 0.
\end{align}
Observe that sampling from $\Xi_n$ is obtained by  measuring the underlying quantum system at time $n$, and therefore does not define a process in time. In other words $\Xi_n$ is only a flow of marginals.

It first appeared in Konno's work \cite{konno_2005} that if we start with the state
\begin{align}
    \psi_0(x, z):= R\I_{x=0, z=1} + L\I_{x=0, z= -1}  \quad \mbox{where}\quad (R, L)\in \C^2 \quad \mbox{with}\quad \abs{R}^2 + \abs{L}^2=1,
\end{align}
 there exists a weak limit for the sequence of random variables ${\Xi_n}/{n}$ with the limiting probability density function on the real axis given by $f_K\cdot \omega_{\psi_0}$, where
\begin{equation}\label{eqns: Konnos_distribution}
\begin{split}
    f_K(y;\theta) &:= \frac{1}{\pi} \frac{\sin\theta }{(1-y^2)\sqrt{\cos^2\theta-y^2}} \I_{\abs{y}\leq \cos\theta} ,
\\
 \omega_{\psi_0}(y) &:= 1+(\abs{R}^2 - \abs{L}^2 +2 \tan\theta \Im(R\overline{L}))y, \quad y\in \R, 
\end{split}
\end{equation}
which could be found in, for instance,  \cite{konno_2005, SunadaTate_2012} with slightly different notations. Indeed one can check that the function $f_K\cdot \omega_{\psi_0}$ is non-negative and integrates to one over $\R$.  
In order to derive this convergence, we characterize the distribution  of ${\Xi_n}/{n}$ by
\begin{align} \label{eqns: quantum_walk_cdf}
    \P \Big(y_1<\frac{\Xi_n}{n} \leq y_2\Big) 
    &= \sum_{x\in \Z,\, ny_1< x\leq ny_2} (\abs{\psi_n(x, 1)}^2 + \abs{\psi_n(x, -1)}^2)\\
    &= n\int_{\frac{1}{n}\floor{ny_1}}^{\frac{1}{n}\floor{ny_2}} (\abs{\psi_n(\ceil{an}, 1)}^2 +  \abs{\psi_n(\ceil{an}, -1)}^2) \mathrm d a,\nonumber
\end{align}
for every  $y_1<y_2\in \R$. 

In Figure \ref{fig:density} we plot on the left an approximation of this density obtained by solving recursively \eqref{eq: def2.1}, and on the right we plot the limiting density $f_K \cdot \omega_{\psi_0} \equiv f_K $ in \eqref{eqns: Konnos_distribution} with $\theta = \pi / 4$, $R = L = 1/ \sqrt{2}$.

\begin{figure}[H] 
\begin{center}
\begin{tabular}{cc}
\includegraphics[scale=.4]{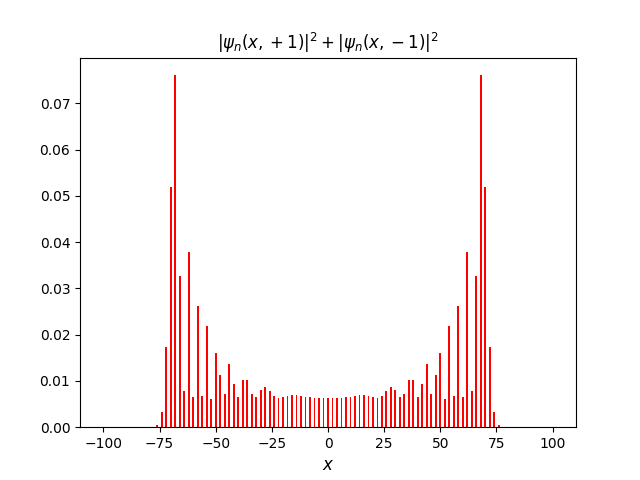} & 
\includegraphics[scale=.4]{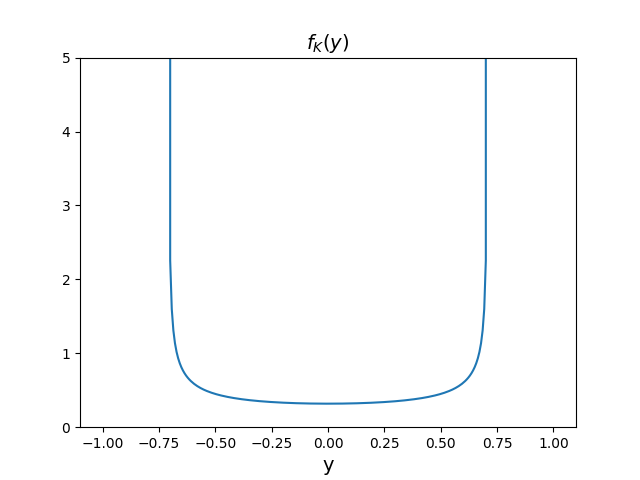}  
 \\
\end{tabular}
\caption{(Left) Probability mass function $\mathbb P ( \Xi_{100} = x) $, $-100 \le x \le 100$ of the discrete-time quantum walk computed recursively from \eqref{eq: def2.1} and then \eqref{eq: Psin} with the  balanced coin $e^{i (\pi/4)\sigma_1}$ and the initial condition described in Example \ref{ex: QWex2} and Figure \ref{fig: quantum_walk_exmple}. (Right) The limiting  probability density function $f_{K}(y; \theta ) $, $\lvert y \rvert < 1/\sqrt{2}$ in \eqref{eqns: Konnos_distribution} with $\theta = \pi / 4$, $R = L = 1/ \sqrt{2}$. \label{fig:density}
 }
\end{center}
\end{figure}

On the other hand, given the four-state Markov chain $(S_n)$ defined in Definition \ref{defn: MA Model} supported in $\Omega:=\{0, 1, 2, 3\}$, we can define $T_n:= \sum_{j=0}^{n-1} (-1)^{S_j}$ for every $n\geq 1$ and $T_0:= 0$.  

Writing the Feynman formula \eqref{eqns: Feynman Formula} using the pair $(S_n, T_ n)$ gives:
\begin{proposition}
Denoting $\psi_n^\xi$ the nth step states for the quantum walk with initial condition $\psi_0^\xi(x, z):= \I_{x=0, z = \xi}$, we have for every fix $n\in \N$:
\begin{align}\label{eqns: Feynman_Formula_Probability_difference}
     \psi_n^\xi(x, z)  =
 \begin{cases}
     e^{n\theta}[\P(T_n = xz, S_n = 0) -\P(T_n = xz, S_n = 2) ], \quad z = \xi\\
     ie^{n\theta}[\P(T_n = xz, S_n = 1) -\P(T_n = xz, S_n = 3) ], \quad z = -\xi  , 
 \end{cases}    
\end{align}
for every $x\in \Z$ and $z= \pm 1$.

\end{proposition}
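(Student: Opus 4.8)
The plan is to start from the Feynman formula \eqref{eqns: Feynman Formula} specialized to the initial condition $\psi_0^\xi(x,z) = \I_{x=0,z=\xi}$, and then expand the expectation $\E[i^{S_n}\psi_0^\xi(X_n,Z_n)\mid (S_0,Z_0,X_0)=(0,z,x)]$ by conditioning on the terminal value $S_n$ of the driving Markov chain. Since $S_n$ takes values in $\{0,1,2,3\}$ and $i^{S_n}\in\{1,i,-1,-i\}$ accordingly, the expectation splits into four terms weighted by $\P(S_n=s)$ on the appropriate events.

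The key observation is that the indicator $\psi_0^\xi(X_n,Z_n)=\I_{X_n=0,Z_n=\xi}$ couples the event $\{X_n=0\}$ with a parity constraint on $S_n$. Recall from Definition \ref{defn: MA Model} that $Z_n = f(S_n)Z_0 = (-1)^{S_n}z$, so $Z_n=\xi$ forces $(-1)^{S_n}z=\xi$; when $z=\xi$ this means $S_n$ is even (so $S_n\in\{0,2\}$), and when $z=-\xi$ it means $S_n$ is odd (so $S_n\in\{1,3\}$). The other condition $X_n=0$, combined with the relation $T_n = \sum_{j=0}^{n-1}(-1)^{S_j}$ — and I would first record the algebraic identity $X_n = x - z\,T_n$ following from $X_n = X_0 - \sum_{j=0}^{n-1}Z_j = x - z\sum_{j=0}^{n-1}(-1)^{S_j}$, using $Z_j=(-1)^{S_j}z$ — becomes $x - zT_n=0$, i.e.\ $T_n = x/z = xz$ (since $z=\pm1$). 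So the event $\{X_n=0, Z_n=\xi\}$ is exactly $\{T_n=xz\}$ intersected with the parity condition on $S_n$.

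Putting these together: for $z=\xi$, only $S_n\in\{0,2\}$ contributes, giving $e^{n\theta}(i^0\P(T_n=xz,S_n=0)+i^2\P(T_n=xz,S_n=2)) = e^{n\theta}(\P(T_n=xz,S_n=0)-\P(T_n=xz,S_n=2))$; for $z=-\xi$, only $S_n\in\{1,3\}$ contributes, giving $e^{n\theta}(i^1\P(T_n=xz,S_n=1)+i^3\P(T_n=xz,S_n=3)) = ie^{n\theta}(\P(T_n=xz,S_n=1)-\P(T_n=xz,S_n=3))$. This is precisely \eqref{eqns: Feynman_Formula_Probability_difference}, and the $n=0$ case can be checked directly (both sides equal $\I_{x=0,z=\xi}$).

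The argument is essentially bookkeeping, so there is no serious obstacle; the one point requiring a little care is the clean identification $X_n = x - zT_n$ and the resulting equivalence $\{X_n=0\}\Leftrightarrow\{T_n=xz\}$ under the conditioning, together with making sure the parity of $S_n$ is correctly matched to the sign $z=\pm\xi$ via $f(s)=(-1)^s$. Once that translation is in place, the four-way split of $i^{S_n}$ delivers the two stated cases.
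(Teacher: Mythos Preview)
Your proposal is correct and follows essentially the same approach as the paper's own proof: substitute the identities $X_n = x - zT_n$ and $Z_n = (-1)^{S_n}z$ into the Feynman formula \eqref{eqns: Feynman Formula}, then evaluate the expectation against the indicator initial condition $\psi_0^\xi$. The paper states this in two lines while you spell out the parity bookkeeping on $S_n$ and the equivalence $\{X_n=0\}\Leftrightarrow\{T_n=xz\}$, but the underlying argument is identical.
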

\begin{proof}
Substituting the relations $X_n = x - zT_n$ and $Z_n = (-1)^{S_n}$ in the Feynman formula \eqref{eqns: Feynman Formula} gives:
\begin{align}\label{eqns: Feynman_Formula_Tn}
    \psi_n(x, z) = e^{n\theta} \E\left[ i^{S_n} \psi_0(x - zT_n, z(-1)^{S_n})\right] , \quad n \ge 0 . 
\end{align}
For the initial states  $\psi_0^\xi(x, z):= \I_{x=0, z = \xi}$ with $\xi = 1$ or $\xi = -1$, we obtain \eqref{eqns: Feynman_Formula_Probability_difference}. 
\end{proof}
By the ergodic theorem for the irreducible aperiodic finite-state Markov chain $(S_n)$ with the doubly stochastic transition probability matrix (see \eqref{eqns: transition_of_Sn} in the next section for the precise expression), 
its stationary distribution is uniform, and so we have:
\begin{align}
\lim_{n\to \infty} \frac{T_n}{n} = 0 \quad a.s. 
\end{align}
Rescaling the space  in \eqref{eqns: Feynman_Formula_Probability_difference} by setting $x = an$ for $a\in \R$ such that $an\in \Z$, by the theory of large deviations \cite{Dembo_2010}, we see that each probability term $\mathbb P ( T_n / n = a z , S_n= \cdot) $ in \eqref{eqns: Feynman_Formula_Probability_difference} is exponentially small for large $n$ and $a \neq 0$. Equation \eqref{eqns: Feynman_Formula_Probability_difference} tells us that the quantum walk amplitude $ \psi_n^\xi (an, z) $ is a re-scaled (by a factor $e^{n\theta}$) difference of small probabilities led by large deviation. 

The remainder of this section is devoted to link the asymptotic behavior of \eqref{eqns: Feynman_Formula_Probability_difference} to the properties of the Markov additive model $(S_n, T_n)$. 
\subsection{Joint distribution of \texorpdfstring{$(S_n, T_n)$}{(Sn,Tn)}}
Setting up notations, we define $p_n(s, t):= \P(S_n = s, T_n = t)$ for every $s\in\Omega$ and $t\in \Z$. We denote the transition probability kernel of $(S_n)$ by $P:= \{p(s',s)\}_{(s',s)\in \Omega^2}$. In matrix form, the kernel is given by
\begin{align}\label{eqns: transition_of_Sn}
 &   P = 
\begin{bmatrix}
    p_0 & p_1 & p_2 & p_3 \\
    p_3 & p_0 & p_1 & p_2\\
    p_2 & p_3 & p_0 & p_1 \\
    p_1 & p_2 & p_3 & p_0
\end{bmatrix},
\quad \mbox{where}\quad 
\begin{bmatrix}
    p_0\\
    p_1\\
    p_2\\
    p_3
\end{bmatrix}:=\frac{e^{-\theta}}{2}
\begin{bmatrix}
    \cosh\theta+\cos\theta\\ \sinh\theta+\sin\theta\\ \cosh\theta-\cos\theta\\ \sinh\theta-\sin\theta
\end{bmatrix}.  
\end{align}
Here, $p_s = \mathbb P(N_1 \equiv s \mod 4) $, $s \in \Omega$ can be obtained, for instance, by the probability generating function of the Poisson random variable $N_1$. We shall evaluate the characteristic functions  
\begin{align*}
    \mathbb E (e^{i k T_n}  {\I_{\{S_n = s\}}}) , \quad k \in (-\pi, \pi) ,\quad s\in \Omega, \quad n \ge 1,
\end{align*}
and invert them to obtain the joint distribution $p_n$ in \eqref{eqns: joint_marginal_Fourier_inversion}.
Indeed, it is straightforward to see that $(p_n)$ satisfies the forward dynamics
\begin{align*}
p_{n+1}(s, t) = \sum_{s'\in \Omega} p_n(s', t-f(s'))p(s', s) \quad\mbox{for} \quad n\geq 1, \quad 
    p_{0}(s, t) = \I_{s=0, t = 0},
\end{align*}
for all $(s, t)\in \Omega\times \Z$. Taking the Fourier transform on the $t$ component and defining the respective Fourier transform $\hat p(s, k):= \sum_{t\in \Z} p(s, t)e^{ikt}$ for every $k\in (-\pi, \pi)$, we have 

\begin{align}\label{eqns: Fourier_dynamics_of_joint}
    \hat p_{n+1}(s, k) = \sum_{s'\in \Omega} \hat p_n(s', k) e^{ikf(s')} p(s', s) \quad\mbox{for} \quad n\geq 1, \quad  
    \hat p_0 (s, k) &= \I_{s=0}, 
\end{align}
for every $(s, k)\in \Omega\times (-\pi, \pi)$. Defining the tilted kernels $P_k:= \{p(s', s)e^{ikf(s')}\}_{(s', s)\in \Omega^2}$ with  parameters $k\in (-\pi,\pi)$,  we can re-write  the Fourier dynamics \eqref{eqns: Fourier_dynamics_of_joint} to be the matrix dynamics 
\begin{align*}
    \hat p_{n+1}(k) = P_k \hat p_n(k)\quad\mbox{for} \quad n\geq 1, \quad  \hat{p}_0(k) = 
\begin{bmatrix}
    1 & 0 & 0 & 0
\end{bmatrix}^T,
\end{align*}
where $\hat p_{n}(k):= \{\hat p_n(s,k)\}_{s\in \Omega}$ are vectors and $P_k$ has the matrix representation
\begin{align}\label{eqns: tilted_kernel_matrix}
    P_k = \begin{bmatrix}
    e^{ik}p_0 & e^{ik}p_1 & e^{ik}p_2 & e^{ik}p_3 \\
    e^{-ik}p_3 & e^{-ik}p_0 &e^{-ik} p_1 & e^{-ik}p_2\\
    e^{ik}p_2 & e^{ik}p_3 & e^{ik}p_0 & e^{ik}p_1 \\
    e^{-ik}p_1 &e^{-ik} p_2 & e^{-ik}p_3 & e^{-ik}p_0
\end{bmatrix}.
\end{align}
By the inverse Fourier transform we can thus obtain the joint marginals of $(S_n, T_n)$ to be 
\begin{align}\label{eqns: joint_marginal_Fourier_inversion}
    p_n(s, t) = \frac{1}{2\pi}\int_{-\pi}^\pi (P_k^n \hat p_0(k))(s) e^{-ikt}\mathrm d k.
\end{align}
Subsequent simplifications can be made by studying the spectrum of the matrices $P_k$:
\begin{proposition}\label{prop: split_spectrum}
   The subspaces of $\ell_2(\Omega)$ defined by $A:= span \{e_0+e_2, e_1+e_3\}$ and $B:= span \{e_0 - e_2, e_1 - e_3\}$ are invariant subspaces of the tilted kernel $P_k$. Furthermore the matrix representations of $P_k$ under $A, B$ are given respectively by the diagonalizable matrices 
\begin{align}\label{eqns: split_matrix}
   A(k) := e^{-\theta}\begin{bmatrix}
    e^{ik}\cosh\theta & e^{ik}\sinh\theta\\
    e^{-ik}\sinh\theta & e^{-ik}\cosh\theta
\end{bmatrix}, \quad     B(k) := 
e^{-\theta}\begin{bmatrix}
    e^{ik}\cos\theta & e^{ik}\sin\theta\\
    -e^{-ik}\sin\theta & e^{-ik}\cos\theta
\end{bmatrix},
\end{align}
with the eigenvalues of $A(k)$ being 
\begin{align*}
    \lambda_{A,1}(k) &=e^{-\theta}[ \cos k\cosh\theta + \sqrt{\cos^2k\cosh^2\theta-1} ], \quad \mbox{and}\\
      \lambda_{A,2}(k) &=e^{-\theta}[ \cos k\cosh\theta - \sqrt{\cos^2k\cosh^2\theta-1} ],
\end{align*}
and the eigenvalues of $B(k)$ being
\begin{align*}
    \lambda_{B,1}(k) &=e^{-\theta}[ \cos k\cos\theta + \sqrt{\cos^2k\cos^2\theta-1} ], \quad \mbox{and}\\
      \lambda_{B,2}(k) &=e^{-\theta}[ \cos k\cos\theta - \sqrt{\cos^2k\cos^2\theta-1} ],
\end{align*}
where we specify the square root function as 
\begin{align*}
    \sqrt{c} = 
\begin{cases}
    \sqrt{c}, & \quad c\geq 0\\
    i\sqrt{-c}, & \quad c< 0
\end{cases}.
\end{align*}
\end{proposition}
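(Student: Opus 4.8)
The plan is to exploit the block structure of $P_k$ coming from the order-two permutation $\Pi\colon e_j\mapsto e_{j+2\,(\mathrm{mod}\ 4)}$ of $\ell_2(\Omega)$, whose $(+1)$- and $(-1)$-eigenspaces are precisely $A$ and $B$. One sees directly from \eqref{eqns: tilted_kernel_matrix} that $\Pi P_k\Pi=P_k$: the matrix is invariant under the simultaneous row-and-column permutation $(0\,2)(1\,3)$, because both the cyclic arrangement of $p_0,p_1,p_2,p_3$ and the alternating $e^{\pm ik}$ pattern are unchanged under a shift of the index by two. Hence $P_k$ commutes with $\Pi$ and therefore leaves $A$ and $B$ invariant, and $\ell_2(\Omega)=A\oplus B$. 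Concretely (and this simultaneously produces the claimed representations) I would simply apply $P_k$ to the four vectors $e_0\pm e_2,\ e_1\pm e_3$ by adding and subtracting the relevant columns of \eqref{eqns: tilted_kernel_matrix}. For example,
\[
P_k(e_0+e_2)=e^{ik}(p_0+p_2)(e_0+e_2)+e^{-ik}(p_1+p_3)(e_1+e_3),
\]
and analogously for $P_k(e_1+e_3)$, $P_k(e_0-e_2)$, $P_k(e_1-e_3)$; the only thing requiring care is the sign bookkeeping in the $B$-block, which is what produces the asymmetric off-diagonal signs in $B(k)$.

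Reading off the coordinates in the ordered bases $\{e_0+e_2,\ e_1+e_3\}$ and $\{e_0-e_2,\ e_1-e_3\}$ gives $2\times 2$ matrices whose entries are $e^{\pm ik}(p_0\pm p_2)$ and $e^{\pm ik}(p_1\pm p_3)$. Substituting the explicit values from \eqref{eqns: transition_of_Sn}, namely $p_0+p_2=e^{-\theta}\cosh\theta$, $p_1+p_3=e^{-\theta}\sinh\theta$, $p_0-p_2=e^{-\theta}\cos\theta$, $p_1-p_3=e^{-\theta}\sin\theta$, turns these into exactly the matrices $A(k)$ and $B(k)$ displayed in \eqref{eqns: split_matrix}.

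For the spectra I would use the $2\times 2$ identity $\lambda_\pm=\tfrac12\big(\mathrm{tr}\pm\sqrt{\mathrm{tr}^2-4\det}\big)$. One computes $\mathrm{tr}\,A(k)=2e^{-\theta}\cos k\cosh\theta$ and, using $\cosh^2\theta-\sinh^2\theta=1$, $\det A(k)=e^{-2\theta}$; similarly $\mathrm{tr}\,B(k)=2e^{-\theta}\cos k\cos\theta$ and, using $\cos^2\theta+\sin^2\theta=1$, $\det B(k)=e^{-2\theta}$. Plugging in and pulling out the common factor $e^{-\theta}$ yields precisely the claimed eigenvalues, with the prescribed branch of the square root being nothing more than a convention that keeps the formulas uniform once the radicand becomes negative. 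Diagonalizability then follows because any $2\times 2$ matrix with two distinct eigenvalues is diagonalizable, and the eigenvalues of $A(k)$ (resp.\ $B(k)$) coincide only when $\cos^2k\cosh^2\theta=1$ (resp.\ $\cos^2k\cos^2\theta=1$).

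The one genuine subtlety — which I would flag explicitly and which is the only real obstacle — is this last degeneracy locus. For $B(k)$ the condition $\cos^2k\cos^2\theta=1$ forces $\theta=\pi$ and $k=0$, where $B(0)=-e^{-\pi}I$ is trivially diagonalizable, so $B(k)$ is diagonalizable for every $k\in(-\pi,\pi)$. For $A(k)$, however, since $\cosh\theta>1$ for $\theta\in(0,2\pi)$, the equation $\cos^2k\cosh^2\theta=1$ has finitely many solutions in $(-\pi,\pi)$, and at such $k$ the matrix $A(k)$ has a repeated eigenvalue but nonzero off-diagonal entries, so it is a genuine Jordan block rather than diagonalizable. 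Thus the precise statement is that $A(k)$ is diagonalizable for all but finitely many $k$; this is harmless for the Fourier inversion \eqref{eqns: joint_marginal_Fourier_inversion} (an integral in $k$) and for the behaviour near $k=0$, where the discriminant is strictly positive. Every remaining step is a short mechanical computation.
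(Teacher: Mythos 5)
Your proposal is correct and goes well beyond the paper's one-line justification (``It can be easily checked by direct computation''). The computations of the $2\times 2$ blocks, the trace/determinant extraction of the eigenvalues, and the arithmetic with $p_0\pm p_2$ and $p_1\pm p_3$ all check out, and the opening observation that $A$ and $B$ are the $\pm 1$ eigenspaces of the involution $\Pi\colon e_j\mapsto e_{j+2\,(\mathrm{mod}\ 4)}$, combined with $\Pi P_k\Pi = P_k$, gives a clean structural reason for the block decomposition that the paper leaves implicit; this is a genuine improvement in transparency, although the subsequent column-by-column computation already establishes invariance on its own.

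You have also spotted a real, if mild, imprecision in the proposition as stated: the blanket claim that $A(k)$ is diagonalizable is false at the finitely many $k\in(-\pi,\pi)$ where $\cos^2 k = 1/\cosh^2\theta$, since there the eigenvalues of $A(k)$ coincide while the off-diagonal entries $e^{\pm ik}\sinh\theta$ are nonzero (recall $\theta\in(0,2\pi)$ with $\theta\neq 0$), so $A(k)$ is a non-trivial Jordan block. Your analysis of $B(k)$ is likewise correct: the only degeneracy is at $(\theta,k)=(\pi,0)$, where $B(0)=-e^{-\pi}I$ is scalar and hence trivially diagonalizable. You correctly note that this is harmless for everything downstream: the offending $A$-branch is removed entirely from the quantum-walk amplitudes in Theorem~\ref{thm: absence_of_A_j}, and in the Fourier inversion \eqref{eqns: joint_marginal_Fourier_inversion} the exceptional set has measure zero. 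A more precise formulation of the proposition would either restrict ``diagonalizable'' to all but finitely many $k$, or simply assert that $A(k)$ and $B(k)$ have the stated eigenvalues and that $B(k)$ is diagonalizable.
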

\begin{proof}
  It can be easily checked by direct computation.  
\end{proof}

Proposition $\ref{prop: split_spectrum}$ tells us that $P_k$ is diagonalizable with eigenvalues $\lambda_{A, j}(k)$ and $\lambda_{B, j}(k)$ for $j = 1, 2$ . Regarding the left eigenvectors of $P_k$, denoted by $\ell_{A,j}(k):= \{\ell_{A,j}(k, s)\}_{s\in \Omega}$ and $\ell_{B,j}(k):= \{\ell_{A,j}(k, s)\}_{s\in \Omega}$; and right eigenvectors of $P_k$, denoted by $r_{A_j}(k):= \{r_{A,j}(k, s)\}_{s\in \Omega}$ and $ r_{B_j}(k):= \{r_{B,j}(k, s)\}_{s\in \Omega}$,   corresponding to the respective eigenvalues $\lambda_{A,j}(k), \lambda_{B,j}(k)$, we can now further decompose \eqref{eqns: joint_marginal_Fourier_inversion} 
 into the sum of four integrals by spectral decomposition:
\begin{align}\label{eqns: marginals_as_integrals_0}
    p_n(s, t) &= \sum_{j=1}^{2} \frac{1}{2\pi}\int_{-\pi}^\pi r_{A,j}(k, 1)\ell_{A,j}(k, s) \lambda_{A,j}^n(k) e^{-ikt}\mathrm d k\\
    &\hskip 1cm +\sum_{j=1}^{2} \frac{1}{2\pi}\int_{-\pi}^\pi r_{B,j}(k, 1)\ell_{B,j}(k, s) \lambda_{B,j}^n(k) e^{-ikt} \mathrm d k,\nonumber\\
     &= \sum_{j = 1, 2}(I_n(s, t; A_j) + I_n(s, t; B_j)) \nonumber
\end{align}
for every $s\in \Omega$ and $t\in \Z$,
where we define for $j=1,2$:
\begin{align}\label{eqns: marginals_as_integralsA}
I_n(s, t; A_j)&=\frac{1}{2\pi}\int_{-\pi}^\pi r_{A,j}(k, 1)\ell_{A,j}(k, s) \lambda_{A,j}^n(k) e^{-ikt}\mathrm d k,\\
I_n(s, t; B_j)&=\frac{1}{2\pi}\int_{-\pi}^\pi r_{B,j}(k, 1)\ell_{B,j}(k, s) \lambda_{B,j}^n(k) e^{-ikt} \mathrm d k.
\label{eqns: marginals_as_integralsB}
\end{align}

\subsection{Derivation of asymptotics via spectral properties}

\subsubsection{Quantum walk amplitudes}

Since \eqref{eqns: Feynman_Formula_Probability_difference} expresses amplitude of a quantum walk as the difference of joint probability of the pair $(S_n, T_n)$,  we substitute \eqref{eqns: marginals_as_integrals_0} into formula \eqref{eqns: Feynman_Formula_Probability_difference} to obtain the key observation:

\begin{theorem}\label{thm: absence_of_A_j}
The amplitude $\psi_n^\xi (x, z)$ of the quantum walk starting at the state $\psi_0^\xi(x, z):= \I_{x = 0, z = \xi}$ is given by 
\begin{align}\label{eqns: absence_of_A_j}
\psi_n^\xi(x, z) =
\begin{cases}
   2e^{n\theta}\sum_{j=1}^2 I_n(0, xz; B_j), \quad z = \xi, \\
  2ie^{n\theta} \sum_{j=1}^2  I_n(1, xz; B_j), \quad z = -\xi,
\end{cases}
\end{align} 
where the effects of eigenvalues and eigenvectors from $A(k)$ have been cancelled.  
\end{theorem}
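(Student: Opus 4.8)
The plan is to combine the probabilistic Feynman representation \eqref{eqns: Feynman_Formula_Probability_difference} with the spectral decomposition \eqref{eqns: marginals_as_integrals_0} and show that all contributions from the $A(k)$-block vanish. Concretely, formula \eqref{eqns: Feynman_Formula_Probability_difference} writes $\psi_n^\xi(x,z)$ for $z = \xi$ as $e^{n\theta}[p_n(0, xz) - p_n(2, xz)]$ and for $z = -\xi$ as $ie^{n\theta}[p_n(1, xz) - p_n(3, xz)]$. So the key algebraic fact to establish is that the differences $p_n(0,t) - p_n(2,t)$ and $p_n(1,t) - p_n(3,t)$ pick out exactly the $B$-block and double it: $p_n(0,t) - p_n(2,t) = 2\sum_{j=1}^2 I_n(0,t;B_j)$ and $p_n(1,t) - p_n(3,t) = 2\sum_{j=1}^2 I_n(1,t;B_j)$.

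First I would unwind the spectral sum \eqref{eqns: marginals_as_integrals_0} coordinate by coordinate in $s \in \Omega$. The point is that the right eigenvectors $r_{A,j}(k)$ live in the subspace $A = \mathrm{span}\{e_0 + e_2, e_1 + e_3\}$ and the right eigenvectors $r_{B,j}(k)$ live in $B = \mathrm{span}\{e_0 - e_2, e_1 - e_3\}$, by Proposition \ref{prop: split_spectrum}. Therefore, as functions of the coordinate index, $r_{A,j}(k,0) = r_{A,j}(k,2)$ and $r_{A,j}(k,1) = r_{A,j}(k,3)$, while $r_{B,j}(k,0) = -r_{B,j}(k,2)$ and $r_{B,j}(k,1) = -r_{B,j}(k,3)$. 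Taking the difference $I_n(0,t;A_j) - I_n(2,t;A_j)$ in \eqref{eqns: marginals_as_integralsA} therefore kills the $A$-terms (the integrands agree), and the difference $I_n(0,t;B_j) - I_n(2,t;B_j) = 2 I_n(0,t;B_j)$ doubles the $B$-terms. The same computation with indices $1,3$ handles the coin-flipped case. Substituting these into \eqref{eqns: Feynman_Formula_Probability_difference} yields \eqref{eqns: absence_of_A_j} directly.

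To be careful about the eigenvector normalization, I would note that the four $I_n$-integrands in \eqref{eqns: marginals_as_integrals_0} are normalization-independent provided the left and right eigenvectors are chosen biorthogonally (scaling $r$ by $c$ forces $\ell$ to be scaled by $1/c$ so that the spectral projector $r_{A,j}\ell_{A,j}^{\,*}$ is unchanged), and what we actually need is only the sign structure of the right eigenvectors across the index pairs $\{0,2\}$ and $\{1,3\}$, which is dictated solely by membership in the invariant subspaces $A$ and $B$ — no explicit diagonalization of $A(k)$ or $B(k)$ is needed. I would also recall that $\hat p_0(k) = e_0 = \tfrac12(e_0+e_2) + \tfrac12(e_0 - e_2)$ decomposes into both blocks, which is why the first coordinate argument ``$1$'' (i.e. the $s=0$ component of the initial vector, written $(k,1)$ in \eqref{eqns: marginals_as_integralsA}) survives in both $A$- and $B$-contributions, but only the $B$-contribution survives the coordinate difference.

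The main obstacle is purely bookkeeping: correctly tracking which coordinate slot of the eigenvectors is being evaluated in \eqref{eqns: marginals_as_integralsA}–\eqref{eqns: marginals_as_integralsB} — the ``$(k,1)$'' slot refers to the component of the initial condition vector $\hat p_0(k)$, while the ``$(k,s)$'' slot is the output component indexed by $s \in \Omega$ — and making sure the $\pm$ signs coming from the $B$-subspace structure are applied to the correct slot (the output slot $s$, not the initial slot) so the cancellation of the $A$-block and the factor $2$ on the $B$-block both come out right. Once the index conventions are pinned down, the rest is a one-line substitution. I do not anticipate any analytic difficulty, since all integrals are over the compact interval $(-\pi,\pi)$ with bounded integrands and the identity is an exact finite-$n$ statement.
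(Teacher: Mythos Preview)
Your overall strategy matches the paper's exactly: substitute \eqref{eqns: marginals_as_integrals_0} into \eqref{eqns: Feynman_Formula_Probability_difference} and use the invariant-subspace structure of Proposition~\ref{prop: split_spectrum} to cancel the $A$-block and double the $B$-block. However, you have tripped over precisely the bookkeeping hazard you flag in your last paragraph. In \eqref{eqns: marginals_as_integralsA}--\eqref{eqns: marginals_as_integralsB} the $s$-dependence of $I_n(s,t;\cdot)$ sits in the \emph{left} eigenvector component $\ell_{\cdot,j}(k,s)$; the right-eigenvector factor $r_{\cdot,j}(k,1)$ is fixed and does not see $s$. So the identities actually needed are $\ell_{A,j}(k,0)=\ell_{A,j}(k,2)$, $\ell_{A,j}(k,1)=\ell_{A,j}(k,3)$ and the sign-reversed versions for $\ell_{B,j}$ --- which is exactly what the paper's proof invokes. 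Your second paragraph argues these relations only for $r_{A,j}$ and $r_{B,j}$, and that does not by itself control the $s$-slot of $I_n$.

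The repair is immediate. The subspaces $A$ and $B$ of Proposition~\ref{prop: split_spectrum} are orthogonal complements in $\ell^2(\Omega)$, so the spectral projector $r_{A,j}\,\ell_{A,j}^{\,T}$ has range contained in $A$ (forcing $r_{A,j}\in A$) and kernel containing $B$ (forcing $\ell_{A,j}\in B^{\perp}=A$); likewise $\ell_{B,j}\in A^{\perp}=B$. Hence the left eigenvectors inherit the very same coordinate symmetries you wrote down for the right ones, and the remainder of your argument goes through verbatim. (Equivalently, one checks directly that $A$ and $B$ are also invariant under $P_k^{T}$.) With this one-line correction your proof and the paper's coincide.
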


\begin{proof}
A direct substitution of \eqref{eqns: marginals_as_integrals_0} into formula \eqref{eqns: Feynman_Formula_Probability_difference}  gives 
for $\xi = \pm 1$ that:
\begin{align*}
    \psi^\xi_n(x, z) =
\begin{cases}
    e^{n\theta} \sum_{j=1}^2 \left[I_n(0,xz; A_j)+I_n(0, xz; B_j)-I_n(2, xz; A_j)-I_n(2, xz; B_j)\right], \quad z= \xi, \\
     ie^{n\theta} \sum_{j=1}^2\left[ I_n(1,xz; A_j)+I_n(1, xz; B_j)-I_n(3, xz; A_j)-I_n(3, xz; B_j)\right], \,\, z = -\xi
\end{cases} 
\label{eqns:Feynman_difference_of_integrals}
\end{align*}
for every $x\in \Z$ and $z = \pm 1$. By Proposition \ref{prop: split_spectrum}, the left eigenvectors satisfy
\begin{align*}
\ell_{A, j}(k, 0) = \ell_{A, j}(k, 2), \quad \ell_{A, j}(k, 1) = \ell_{A, j}(k, 3),\\
    \ell_{B, j}(k, 0)  = -\ell_{B, j}(k, 2), \quad \ell_{B, j}(k, 1) = -\ell_{B, j}(k, 3)
\end{align*}
for $j = 1, 2$. Therefore, by the definition of the integrals in \eqref{eqns: marginals_as_integralsA} and \eqref{eqns: marginals_as_integralsB}, we have:
\begin{align*}
    I_n(0, t; A_j) = I_n(2, t; A_j),  \quad I_n(1, t; A_j) = I_n(3, t; A_j),\\ 
     I_n(0, t; B_j) = -I_n(2, t; B_j),  \quad I_n(1, t; B_j) = -I_n(3, t; B_j)
\end{align*}
for every $t\in \Z$ and $j = 1, 2$. Then \eqref{eqns: absence_of_A_j} follows. 
\end{proof}

\begin{remark}
    In fact, the matrices $A(k)$ are driving the large deviation regime of the Markov additive model $(S_n, T_n)$. Because of the cancellation in the difference of small probabilities as seen in \eqref{eqns: Feynman_Formula_Probability_difference}, the only factors that drive the asymptotic behavior of $\psi_n^\xi$ are the matrices $B(k)$. 
\end{remark}

\begin{remark}\label{remark: Relation_with_Grimmet}
In Proposition \ref{prop: split_spectrum}, using the alternative basis $\{e_0 - e_2, i(e_1- e_3)\}$ for the subspace $B$, we can rewrite the matrix representation of $P_k$ over it as
\begin{align*}
    e^{-\theta}
\begin{bmatrix}
e^{ik} & \\
 & e^{-ik}
\end{bmatrix}
\begin{bmatrix}
    \cos \theta & i\sin \theta \\
    i\sin \theta & \cos \theta 
\end{bmatrix} = e^{-\theta}\diag(e^{ik}. e^{-ik})e^{i\theta \sigma_1},
\end{align*}
where $\diag (e^{ik}, e^{-ik})e^{i\theta \sigma_1}$ is the matrix representation of the quantum walk operator $W:=SC$ over the Fourier domain in space, which is first documented in \cite{Grimmett_2004}. One can easily show that \eqref{eqns: absence_of_A_j} in Theorem \ref{thm: absence_of_A_j} still holds using this matrix representation, which coincides with the integral studied in Equation (1.20) in \cite{SunadaTate_2012}. In the following, we continue with our initial natural choice of basis in Proposition \ref{prop: split_spectrum}. 
\end{remark}
\subsubsection{Deriving quantum walks asymptotics}
With the initial conditions
\begin{align*}
    \psi_0:= R \psi_0^+ + L \psi_0^- \quad &\mbox{where}\quad \psi_0^{\xi}(x, z) = \I_{x=0, z=\xi}\quad \mbox{with} \quad \xi = \pm 1 ,\\
    &\mbox{and}\quad  (R, L)\in \C^2 \quad\mbox{with}\quad \abs{R}^2 + \abs{L}^2 = 1,\nonumber
\end{align*}
we can rewrite \eqref{eqns: quantum_walk_cdf} as 
\begin{equation}\label{eqns: decomposing_cdf}
\begin{split}
    \P \Big(y_1<\frac{\Xi_n}{n}\leq y_2\Big) &= n\int_{\frac{1}{n}\floor{ny_1}}^{\frac{1}{n}\floor{ny_2}}  \sum_{z=\pm 1}\abs{R\psi_n^+(\ceil{an}, z)+L\psi_n^-(\ceil{an}, z)}^2 \mathrm d a , \\ 
    &= n\int_{\frac{1}{n}\floor{ny_1}}^{\frac{1}{n}\floor{ny_2}}
    \sum_{z=\pm 1}\Big[ \abs{R}^2 \abs{\psi_n^+(\ceil{an}, z)}^2  + \abs{L}^2 \abs{\psi_n^-(\ceil{an}, z)}^2 \\
    & \qquad \qquad \qquad \qquad \qquad + 2\Re(R\overline{L} \psi_n^+(\floor{an}, z) \overline{\psi_n^-(\floor{an}, z)})\Big]\mathrm d a. 
    \end{split}
\end{equation}

In order to handle the terms in \eqref{eqns: decomposing_cdf}, we see from \eqref{eqns: absence_of_A_j} that we need
the asymptotic behavior of integrals of
\begin{align}
e^{2n\theta}\abs{I_n(s, \ceil{an}, B_1)+I_n(s, \ceil{an}, B_2)}^2,\quad s = 0,1,
\end{align}
as well as of the cross term:
\begin{align}
    e^{2n\theta}(I_n(0, \ceil{an}, B_1)+I_n(0, \ceil{an}, B_2)) \overline{(I_n(1, \ceil{an}, B_1)+I_n(1, \ceil{an}, B_2))}.
\end{align}
We have the following convergence results: 
\begin{lemma}\label{lemma: integral_asymptotics}
Let $\epsilon$ be given such that $0<\epsilon<\frac{1}{2}\cos\theta$ where we suppose w.l.o.g. that $\cos\theta>0$. For $(y_1,y_2)$ such that
$-\cos\theta+\epsilon\leq y_1<y_2\leq \cos\theta -\epsilon$, we have:
\begin{align}
 & \lim_n  n\int_{\frac{1}{n}\floor{ny_1}}^{\frac{1}{n}\floor{ny_2}}e^{2n\theta}\abs{\sum_{j=1, 2} I_n(0, \ceil{an}, B_j)}^2 \mathrm d a = \frac{1}{8}\int_{y_1}^{y_2} (1+a)^2 f_K(a; \theta) \mathrm d a,\label{eqns: integral_quantum_limit_0}\\
   & \lim_n  n\int_{\frac{1}{n}\floor{ny_1}}^{\frac{1}{n}\floor{ny_2}} e^{2n\theta}\abs{ \sum_{j=1, 2} I_n(1, \ceil{an}, B_j)}^2 \mathrm d a = \frac{1}{8}\int_{y_1}^{y_2} (1-a^2) f_K(a; \theta) \mathrm d a,\label{eqns: integral_quantum_limit_1}\\
   & \lim_n  n\int_{\frac{1}{n}\floor{ny_1}}^{\frac{1}{n}\floor{ny_2}}  e^{2n\theta}\left(\sum_{j=1, 2} I_n(0, \ceil{an}, B_j))(\overline{\sum_{j=1, 2} I_n(1, \ceil{an}, B_j)}\right) \mathrm d a  \label{eqns: integral_quantum_limit_cross} \\ \notag 
    &= \frac{1}{8}\int_{y_1}^{y_2} a(1+a)\tan\theta f_K(a; \theta )\mathrm d a,
\end{align}
where $f_K$ is the ballistic limiting density function of quantum walks introduced in \eqref{eqns: Konnos_distribution}.
\end{lemma}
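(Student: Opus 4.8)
The plan is to compute the asymptotics of each integral $I_n(s, \lceil an\rceil; B_j)$ defined in \eqref{eqns: marginals_as_integralsB} by a stationary phase / Laplace-type analysis of the oscillatory integral $\frac{1}{2\pi}\int_{-\pi}^\pi r_{B,j}(k,1)\ell_{B,j}(k,s)\lambda_{B,j}^n(k)e^{-ikt}\,\mathrm dk$ with $t = \lceil an\rceil$. First I would record explicitly the eigenvalues $\lambda_{B,j}(k)$ from Proposition \ref{prop: split_spectrum} together with normalized left/right eigenvectors of $B(k)$; since $B(k)$ is a $2\times 2$ matrix these are elementary rational-trigonometric expressions, and the prefactor $r_{B,j}(k,1)\ell_{B,j}(k,s)$ is smooth in $k$ away from the branch points $\cos^2 k\cos^2\theta = 1$. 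The crucial point is that $e^{n\theta}\lambda_{B,j}(k) = \cos k\cos\theta \mp i\sqrt{1-\cos^2 k\cos^2\theta}$ has modulus exactly $1$ for all real $k$ (using the specified square-root convention, since $\cos^2 k\cos^2\theta\le 1$), so $e^{n\theta}\lambda_{B,j}(k) = e^{-i\phi_j(k)}$ for a real phase $\phi_j$; hence $e^{n\theta}I_n(s,\lceil an\rceil; B_j)$ is a genuine oscillatory integral $\frac{1}{2\pi}\int_{-\pi}^\pi g_{j,s}(k)\,e^{-in(\phi_j(k)/1 \cdot + ak)}\,\mathrm dk$ with bounded amplitude, amenable to stationary phase.

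Next I would locate the stationary points: setting $\frac{\mathrm d}{\mathrm dk}(\phi_j(k)+ak)=0$ gives a relation between $a$ and $k$ which, after differentiating $\cos\phi_j = \cos k\cos\theta$, produces $a = \mp\frac{\sin k\cos\theta}{\sqrt{1-\cos^2 k\cos^2\theta}}$ for the two branches; solving for the stationary $k = k_*(a)$ one finds $\cos^2 k_* = \frac{(1-a^2)\text{-ish}}{\cos^2\theta}$ type identities, and substituting back yields the group velocity map whose Jacobian is exactly $f_K(a;\theta)$ up to the constant $\tfrac18$. This is precisely the mechanism by which Konno's density $f_K$ appears: it is the pushforward of the uniform-on-$k$ measure under the group-velocity map $k\mapsto -\phi_j'(k)$, and the factors $(1+a)^2$, $(1-a^2)$, $a(1+a)\tan\theta$ come from evaluating $|r_{B,j}(k_*,1)\ell_{B,j}(k_*,s)|^2$ (respectively the cross product) at the stationary point. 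I would carry this out by writing $e^{2n\theta}|\sum_j I_n(s,\lceil an\rceil; B_j)|^2$ as a double integral over $(k,k')$, apply stationary phase so only the diagonal $k=k'=k_*(a)$ survives in the $n\to\infty$ limit (the off-branch and off-diagonal contributions oscillate away, which is where the $\epsilon$-margin $|a|\le\cos\theta-\epsilon$ keeping us away from the branch points $a=\pm\cos\theta$ is used), and then recognize that the outer $n\int_{\frac1n\lfloor ny_1\rfloor}^{\frac1n\lfloor ny_2\rfloor}\cdots\,\mathrm da$ converges to $\int_{y_1}^{y_2}(\text{limit of integrand in }a)\,\mathrm da$ by a dominated-convergence argument once the integrand is shown to be uniformly bounded on the compact $a$-range.

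The main obstacle I anticipate is making the stationary-phase interchange rigorous at the level of the $n$-scaled $L^2$-type quantities rather than pointwise: one must show that $e^{2n\theta}|\sum_j I_n(s,\lceil an\rceil; B_j)|^2$ converges to its stationary-phase prediction \emph{uniformly enough in $a$} on $[y_1,y_2]$ to pass the limit inside $n\int\cdots\mathrm da$, and simultaneously handle the fact that $t=\lceil an\rceil$ is an integer jittering around $an$ (the discrepancy $\lceil an\rceil - an$ contributes a bounded phase $e^{-ik(\lceil an\rceil - an)}$ that must be controlled). I would deal with this by proving a uniform stationary-phase estimate with explicit error $O(n^{-1/2})$ valid for $a$ in the compact range bounded away from the caustics, using that $\phi_j''(k_*(a))\neq 0$ there (non-degeneracy of the stationary point, equivalent to $f_K(a;\theta)<\infty$), and then the outer $\mathrm da$-integral converges by bounded convergence. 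The remaining work — extracting the exact constants $\tfrac18$ and the polynomial weights $(1+a)^2$, $(1-a^2)$, $a(1+a)\tan\theta$ — is a direct but lengthy computation of the eigenvector prefactors at $k_*(a)$, which I would present as a lemma with the trigonometric identities $\cos^2 k_*\cos^2\theta = 1-\frac{a^2}{\,\cdot\,}$ collected beforehand; cross-checking against Remark \ref{remark: Relation_with_Grimmet} and Equation (1.20) of \cite{SunadaTate_2012} gives an independent confirmation of the answer.
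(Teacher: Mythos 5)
Your overall plan matches the paper's: identify that $e^{n\theta}\lambda_{B,j}(k)$ is unimodular, write $e^{n\theta}I_n(s,\cdot\,;B_j)$ as genuine Fourier integrals with real phases $\pm\cos^{-1}(\cos k\cos\theta)-ak$, locate the stationary points (the paper finds four, related by $k_2=\pi-k_1$, $k_3=-k_2$, $k_4=-k_1$), observe that the second derivative of the phase at a critical point is $\mathrm{sgn}(a)/(\pi f_K(a;\theta))$ so that $f_K$ emerges as the pushforward of Lebesgue measure on $k$ under the group-velocity map, and extract the polynomial weights from the eigenvector prefactors at those critical points. This is exactly the computation done in the appendix, including the justification for replacing $\lceil an\rceil$ by $an$ via \cite{SunadaTate_2012}.

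There is, however, a genuine gap in the final step. You propose to pass to the limit inside $n\int_{\cdots}^{\cdots}(\cdot)\,\mathrm{d}a$ by dominated/bounded convergence ``once the integrand is shown to be uniformly bounded.'' But $n\,e^{2n\theta}\bigl|\sum_j I_n(s,\lceil an\rceil;B_j)\bigr|^2$ does \emph{not} have a pointwise limit in $a$. The single-integral stationary-phase expansion produces four coherent $O(n^{-1/2})$ contributions (one per critical point), and squaring or cross-multiplying them yields, in addition to the ``diagonal'' non-oscillating terms you want, ``off-diagonal'' terms of the \emph{same} order $O(n^{-1})$ carrying phase factors like $e^{in[\Theta(k_i)-\Theta(k_j)]}$ with $i\neq j$. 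After multiplying by $n$ these become $O(1)$ oscillating terms — concretely, factors such as $\cos(n\pi(1-a))$ and $\sin(2n\Theta_{B_1}(k_1,a)-n\pi(1-a))$ appear in the pointwise asymptotic. These oscillate in $n$ for each fixed $a$ and hence have no pointwise limit, so dominated convergence is not available. The same issue arises if you set up a $(k,k')$ double integral and apply two-dimensional stationary phase: the off-diagonal critical points $(k_i,k_j)$, $i\neq j$, are non-degenerate and also contribute $O(n^{-1})$, so nothing ``oscillates away'' pointwise. The correct mechanism, which the paper uses, is to integrate against $\mathrm{d}a$ first and then kill the $O(1)$ oscillating terms by the Riemann--Lebesgue Lemma — noting that $\partial_a[2\Theta_{B_1}(k_1,a)-\pi(1-a)]$ is bounded away from zero on the integration range — together with one further application of stationary phase in $a$ for the term whose phase $2\Theta_{B_1}(k_1(a),a)$ has $\partial_a$-derivative $2k_1(a)$ vanishing at the interior point $a=0$. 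Without this averaging-in-$a$ step, the argument does not close.
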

\begin{proof}
    This is a consequence of the stationary phase asymptotics uniformly in $a\in [y_1,y_2]$. Note that for fixed $\epsilon>0$, we can choose $n$ large enough such that   the inequality $-\cos\theta+\epsilon/2\leq  \frac{1}{n}\floor{ny_1} \leq \frac{1}{n}\floor{ny_2}\leq \cos\theta -\epsilon/2$ holds.   
    See \hyperref[appendix: fourier]{Appendix} for the details.
\end{proof}

Coming back to the distribution of ${\Xi_n}/{n}$ given by \eqref{eqns: quantum_walk_cdf}, we deduce:
\begin{theorem} \label{thm: ballistic_limit_for_rotation}
With the initial state, 
\begin{align*}
    \psi_0(x, z):= R\I_{x=0, z=1} + L\I_{x=0, z= -1}  \quad \mbox{where}\quad (R, L)\in \C^2 \quad \mbox{with}\quad \abs{R}^2 + \abs{L}^2=1,
\end{align*}
$\frac{1}{n}\Xi_n$ converges in distribution to $Z$ where $Z$ is a real-valued random variable with probability density defined by $w_{\psi_0}(y)f_K(y; \theta )$ in \eqref{eqns: Konnos_distribution}.
\end{theorem}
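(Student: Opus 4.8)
The plan is to combine the exact amplitude formula from Theorem~\ref{thm: absence_of_A_j} with the integral asymptotics of Lemma~\ref{lemma: integral_asymptotics} to pass to the limit in the cdf expression \eqref{eqns: decomposing_cdf}. First I would fix $y_1 < y_2$ arbitrary reals; by an $\epsilon$-exhaustion argument it suffices to prove convergence of $\P(y_1 < \Xi_n/n \le y_2)$ to $\int_{y_1}^{y_2} \omega_{\psi_0}(a) f_K(a;\theta)\,\mathrm d a$ when $-\cos\theta + \epsilon \le y_1 < y_2 \le \cos\theta - \epsilon$, since outside $[-\cos\theta, \cos\theta]$ the limiting density vanishes and the mass that the pre-limit measures place there is controlled by the large-deviation estimate noted after \eqref{eqns: Feynman_Formula_Probability_difference} (the contribution of $a$ bounded away from $[-\cos\theta,\cos\theta]$ decays exponentially, so any residual mass is asymptotically negligible). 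Tightness plus convergence of the cdf on a dense set of continuity points then upgrades to convergence in distribution.

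Next, inside the good window I would substitute \eqref{eqns: absence_of_A_j} into \eqref{eqns: decomposing_cdf}. The factor $e^{n\theta}$ in \eqref{eqns: absence_of_A_j} exactly matches the $e^{2n\theta}$ normalisations appearing in Lemma~\ref{lemma: integral_asymptotics} once we take square moduli: the $z = \xi$ pieces of $\psi_n^{\pm}$ contribute $4 e^{2n\theta}\,|\sum_j I_n(0,\lceil an\rceil; B_j)|^2$ and the $z = -\xi$ pieces contribute $4 e^{2n\theta}\,|\sum_j I_n(1,\lceil an\rceil; B_j)|^2$, while the cross term $2\Re(R\overline L\, \psi_n^+ \overline{\psi_n^-})$ produces — collecting the $z = +1$ and $z = -1$ contributions and using the extra factor $i$ in the off-diagonal case — a multiple of $\Re$ or $\Im$ of $e^{2n\theta}(\sum_j I_n(0;B_j))\overline{(\sum_j I_n(1;B_j))}$. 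Applying the three limits \eqref{eqns: integral_quantum_limit_0}, \eqref{eqns: integral_quantum_limit_1}, \eqref{eqns: integral_quantum_limit_cross} termwise (with $\lceil an\rceil$ and $\lfloor an\rfloor$ interchangeable in the limit since they differ by one lattice site and the integrands are continuous in $a$), one gets
\begin{align*}
\lim_n \P\Big(y_1 < \tfrac{\Xi_n}{n} \le y_2\Big)
&= \tfrac12 |R|^2 \int_{y_1}^{y_2}\!\big[(1+a)^2 + (1-a^2)\big] f_K(a;\theta)\,\mathrm d a \\
&\quad + \tfrac12 |L|^2 \int_{y_1}^{y_2}\!\big[(1+a)^2 + (1-a^2)\big] f_K(a;\theta)\,\mathrm d a \\
&\quad + \text{(cross term)}.
\end{align*}
A short algebraic simplification using $(1+a)^2 + (1-a^2) = 2(1+a)$ turns the diagonal part into $|R|^2\int (1+a) f_K + |L|^2\int(1-a)f_K$ after also accounting for the sign bookkeeping between the $\psi^+$ and $\psi^-$ off-diagonal components (the $z=-\xi$ branch of $\psi^-$ lands on the $a\mapsto -a$ reflection, which is why $|L|^2$ picks up $(1-a)$); combining with the cross term $2\tan\theta\,\Im(R\overline L)\int a\, f_K$ yields exactly $\int_{y_1}^{y_2} \omega_{\psi_0}(a) f_K(a;\theta)\,\mathrm d a$, matching \eqref{eqns: Konnos_distribution}.

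The main obstacle is \emph{not} in this excerpt's Theorem but is quarantined into Lemma~\ref{lemma: integral_asymptotics}: the uniform stationary-phase analysis of the oscillatory integrals $I_n(s,\lceil an\rceil; B_j)$, including identifying the critical points $k_*(a)$ where the phase $-ka + n^{-1}\log\lambda_{B,j}^n$ is stationary, checking the non-degeneracy of the Hessian on the window $|a| \le \cos\theta - \epsilon$, and verifying that the two eigenvalue branches $j = 1,2$ combine with the correct Jacobian factors to reproduce $f_K$ and its moments — this is deferred to the Appendix. Granting that Lemma, the remaining work here is purely bookkeeping: matching the exponential prefactors, keeping track of the $i$ and the complex conjugates in the cross term, interchanging $\lfloor\cdot\rfloor$/$\lceil\cdot\rceil$ and $\sum\int$ versus $\int\sum$ (justified by dominated convergence, the integrands being uniformly bounded on the compact window by the stationary-phase estimate), and the trigonometric identity collapsing the four quadratic contributions into $\omega_{\psi_0}$. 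A secondary technical point is making the $\epsilon\to 0$ passage rigorous: one shows the limiting density $f_K\cdot\omega_{\psi_0}$ has no atoms at $\pm\cos\theta$ and integrates to one over $\R$, so that the windows $[-\cos\theta+\epsilon,\cos\theta-\epsilon]$ exhaust the total mass as $\epsilon\downarrow 0$, forcing the exponentially small tail contributions to vanish in the limit.
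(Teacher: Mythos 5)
Your proposal follows the paper's own route: substitute the $B_j$-only representation of Theorem \ref{thm: absence_of_A_j} into \eqref{eqns: decomposing_cdf}, apply the three limits of Lemma \ref{lemma: integral_asymptotics}, simplify, and remove the $\epsilon$-window at the end by observing that $f_K\cdot\omega_{\psi_0}$ integrates to one. The plan is sound and all the needed ingredients are in place.

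One inaccuracy worth flagging, since it is precisely the point where the $\omega_{\psi_0}$ asymmetry is produced: the displayed intermediate formula gives \emph{both} $|R|^2$ and $|L|^2$ the coefficient $(1+a)^2+(1-a^2)$, which (since $|R|^2+|L|^2=1$) would collapse to $1+a$ regardless of $(R,L)$ and is therefore wrong as written. The source of the discrepancy is that for $\psi_n^\xi(\lceil an\rceil,z)$ the relevant integral is $I_n(\cdot, xz; B_j)$ with $xz=\pm\lceil an\rceil$, so the two chirality components of $\psi^+$ carry arguments $\lceil an\rceil$ and $-\lceil an\rceil$, while those of $\psi^-$ carry $-\lceil an\rceil$ and $\lceil an\rceil$; after the change of variables $a\mapsto -a$, the $|L|^2$ block should read $(1-a)^2+(1-a^2)=2(1-a)$, matching the paper's equations \eqref{eqns: sum_simplifty_0}--\eqref{eqns: sum_simplifty_1} and \eqref{eqns: quantum_sum_limit_0}--\eqref{eqns: quantum_sum_limit_1}. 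Your verbal patch (``the $z=-\xi$ branch of $\psi^-$ lands on the $a\mapsto -a$ reflection'') also misidentifies the branch: it is the $z=\xi$ branch of $\psi^-$, carrying $I_n(0,-\lceil an\rceil;B_j)$, that reflects into $(1-a)^2$; the $z=-\xi$ branch carries $I_n(1,\lceil an\rceil;B_j)$ with the even coefficient $(1-a^2)$, for which the reflection is immaterial. This is bookkeeping rather than a missing idea, but it is exactly the bookkeeping the theorem is about, so it should appear correctly in the proof rather than be deferred to a remark.
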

\begin{proof}
We observe from \eqref{eqns: absence_of_A_j} that:
\begin{equation}
    \sum_{z=\pm 1} \abs{\psi_n^+(\ceil{an}, z)}^2 = 4e^{2n\theta} \Big( \Big\lvert \sum_{j=1}^2 I_n(0, \ceil{an}, B_j) \Big \rvert^2 +   \Big \lvert \sum_{j=1}^2 I_n(1,\ceil{-an}, B_j ) \Big \rvert^2\Big), \label{eqns: sum_simplifty_0}
    \end{equation}
    \begin{equation}
     \quad \sum_{z=\pm 1} \abs{\psi_n^-(\ceil{an}, z)}^2 = 4e^{2n\theta} \Big(\Big \lvert \sum_{j=1}^2 I_n(0, \ceil{-an}, B_j) \Big \rvert^2 +   \Big \lvert \sum_{j=1}^2 I_n(1,\ceil{an}, B_j )\Big \rvert^2\Big), \label{eqns: sum_simplifty_1}\end{equation}
     \begin{equation}\label{eqns: sum_simplifty_cross}
     \begin{split} 
     \sum_{z=\pm 1} \psi_n^+(\ceil{an}, z)\overline{\psi_n^-(\ceil{an}, z)}&=4e^{2n\theta}\Big [\sum_{j=1}^2 I_n(0, \ceil{an},B_j)\overline{\sum_{j=1}^2iI_n(1, \ceil{an}, B_j)} \\ 
     & \qquad   {}+ i\sum_{j=1}^2 I_n(1, -\ceil{an}, B_j) \overline{\sum_{j=1}^2 I_n(0, -\ceil{an}, B_j)} \Big]. 
     \end{split}
\end{equation}
Using the limits in Lemma \ref{lemma: integral_asymptotics}, we obtain: 
\begin{align} \label{eqns: quantum_sum_limit_0}
&\lim_n n\int_{\frac{1}{n}\floor{n y_1}}^{\frac{1}{n}\floor{n y_2}} \sum_{z=\pm 1}\abs{\psi_n^+ (\ceil{an}, z)}^2\\
&= \frac{4}{8}\int_{y_1}^{y_2} f_K(a; \theta)[(1+a)^2+(1-a^2)] \mathrm d a = \int_{y_1}^{y_2} (1+a)f_K(a; \theta) \mathrm d a,\nonumber\\ \label{eqns: quantum_sum_limit_1}
    &\lim_n n\int_{\frac{1}{n}\floor{n y_1}}^{\frac{1}{n}\floor{n y_2}} \sum_{z=\pm 1}\abs{\psi_n^- (\ceil{an}, z)}^2\\
    &= \frac{4}{8}\int_{y_1}^{y_2} f_K(a; \theta)[(1-a^2)+(1-a)^2]da = \int_{y_1}^{y_2} (1-a)f_K(a; \theta) \mathrm d a,\nonumber\\ 
    \label{eqns: quantum_sum_limit_cross}
   & \lim_n n\int_{\frac{1}{n}\floor{n y_1}}^{\frac{1}{n}\floor{n y_2}} \sum_{z=\pm 1}\psi_n^+ (\ceil{an}, z)\overline{\psi_n^-(\ceil{an}, z)} \\
   &= \frac{4}{8}\int_{y_1}^{y_2} f_K(a; \theta)[-ia(1+a)\tan\theta + i(-a)(1-a)\tan \theta ] \mathrm d a \nonumber\\
    &=-i\int_{y_1}^{y_2} af_K(a; \theta) \mathrm d a\nonumber, 
\end{align}
where we  replaced $a$ with $-a$ in the limits derived in Lemma \ref{lemma: integral_asymptotics} to deal with limits of integrals with the parameter $-\ceil{an}$ instead of $\ceil{an}$ in \eqref{eqns: sum_simplifty_0}, \eqref{eqns: sum_simplifty_1} and \eqref{eqns: sum_simplifty_cross} by making a change of variable in the outer integrals  that concerns the parameter $a$ in Lemma \ref{lemma: integral_asymptotics}. 

Applying these limits \eqref{eqns: quantum_sum_limit_0}, \eqref{eqns: quantum_sum_limit_1}, and \eqref{eqns: quantum_sum_limit_cross} to \eqref{eqns: decomposing_cdf}, we deduce 
\begin{align}
&\lim_n \P \Big(y_1<\frac{\Xi_n}{n}\leq y_2\Big)\\
&=\int_{y_1}^{y_2} f_K(a; \theta) \Big[\abs{R}^2 (1+a) + \abs{L}^2 (1-a) + 2\Re \Big(\frac{R\overline{L}\tan\theta a}{i}\Big)\Big] \mathrm d a \notag \\
&=\int_{y_1}^{y_2} f_K(a; \theta)[1+(\abs{R}^2-\abs{L}^2 + 2\tan\theta \Im (R\overline{L}))a] \mathrm d a, \notag 
\end{align}
which is precisely the integral over $[y_1,y_2]$ of the density $f_K\cdot \omega_{\psi_0}$ defined in \eqref{eqns: Konnos_distribution}. The last step consists in sending $\epsilon$ to $0$ and using 
\begin{align}
  \int_{-\cos\theta}^{\cos\theta}f_K(a;\theta) \omega_{\psi_0}(a) \mathrm d a=1,
\end{align}
to conclude that the density is $f_K\cdot\omega_{\psi_0}$ over the full real line.
\end{proof}

 \section{Hyperbolic scaling limit and relation with the telegraph process}
\label{sec: telegraph}
We consider the case with the homogeneous coin $C = \id \ox e^{i\theta\sigma_1}$ studied in Section \ref{sec: Feynman_Formula}. Our goal is to rescale the discrete time $n\in \N$ and discrete space $x\in \Z$ for $\psi_n(x, z)$ in order to obtain a limiting PDE  in continuous time $t>0$ and space $y\in \R$. This will be achieved by using the Feynman formula \eqref{eqns: Feynman Formula} with the processes $(S_n,Z_n,X_n)$ introduced in Definition \ref{defn: MA Model}, whose scaling limit is given by

\begin{lemma}\label{lemma: classical_scaling_limit}
For all $\epsilon>0$, denoting by $N_n^\epsilon, n=1,2\cdots$ our i.i.d. Poisson random variables with parameter $\epsilon\lambda>0$, and $S_n^\epsilon=\sum_{j=1}^n N_j^\epsilon$ with $S_0^\epsilon=0$, we have the following weak convergence in the space of cadlag paths $D(0, \infty)$:
\begin{align}\label{cvgD1}
   \lim_{\epsilon \to 0}\left(S^\epsilon_{\lfloor t/\epsilon\rfloor}\right)_{t\geq 0} \overset{\L}{=} \left(N_t\right)_{t\geq 0}, \quad \mbox{a Poisson process of intensity}\quad \lambda>0.
\end{align}
Consequently, denoting $Z_n^\epsilon=(-1)^{S_n^\epsilon}z$,
\begin{align}\label{cvgD2}
   \lim_{\epsilon \to 0} \left(Z^\epsilon_{\lfloor t/\epsilon\rfloor}\right)_{t\geq 0} \overset{\L}{=} \left((-1)^{N_t}z\right)_{t\geq 0},\quad\mbox{denoted by}\quad (Z_t)_{t\geq 0}
\end{align}Denoting $X^\epsilon_n=\floor{y/\epsilon}-\sum_{j=0}^{n-1} (-1)^{S_n^\epsilon}$ for $y\in \R$, we have also
\begin{align}\label{cvgD3}
   \lim_{\epsilon \to 0}\left(\epsilon X^\epsilon_{\lfloor t/\epsilon\rfloor}\right)_{t\geq 0} \overset{\L}{=} \left(y-\int_0^t (-1)^{N_s} \mathrm d s\right)_{t\geq 0},\quad\mbox{denoted by}\quad (Y_t)_{t\geq 0}.
\end{align}
Here the process $T_t:= \int_0^t (-1)^{N_s} \mathrm d s $ is the well-known telegraph process driven by the direction process $D_t:= (-1)^{N_t}$; and we can write $Z_t = zD_t$ and $Y_t = y - T_t$. 
\end{lemma}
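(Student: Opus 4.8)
The plan is to establish \eqref{cvgD1} directly, and then to deduce \eqref{cvgD2} and \eqref{cvgD3} from it via the continuous mapping theorem, an elementary Riemann-sum estimate, and Slutsky's theorem. For \eqref{cvgD1}, on one probability space fix a Poisson process $(\tilde N_t)_{t\ge0}$ of intensity $\lambda$ and set $N_j^\epsilon:=\tilde N_{j\epsilon}-\tilde N_{(j-1)\epsilon}$; these are i.i.d.\ Poisson$(\epsilon\lambda)$, so $(S_n^\epsilon)_{n\ge0}$ has the law of $(\tilde N_{n\epsilon})_{n\ge0}$ and hence $\bigl(S_{\lfloor t/\epsilon\rfloor}^\epsilon\bigr)_{t\ge0}\overset{\L}{=}\bigl(\tilde N_{\varphi_\epsilon(t)}\bigr)_{t\ge0}$ with the deterministic time change $\varphi_\epsilon(t):=\epsilon\lfloor t/\epsilon\rfloor$, for which $0\le t-\varphi_\epsilon(t)<\epsilon$. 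On any compact $[0,T]$, almost surely $\tilde N$ has finitely many jumps, all of size one, at distinct times $\sigma_1<\dots<\sigma_m$; once $\epsilon$ is smaller than $\sigma_1$ and than the minimal gap between these times, each cell $((k-1)\epsilon,k\epsilon]$ contains at most one $\sigma_j$, so $\tilde N\circ\varphi_\epsilon$ is the unit-jump step path with jumps at the grid points $\epsilon\lceil\sigma_j/\epsilon\rceil$, each within $\epsilon$ of $\sigma_j$. Interpolating $\sigma_j\mapsto\epsilon\lceil\sigma_j/\epsilon\rceil$ piecewise-linearly (and fixing $0$ and $T$) yields homeomorphisms $\mu_\epsilon$ of $[0,T]$ with $\|\mu_\epsilon-\mathrm{id}\|_\infty<\epsilon$ and $(\tilde N\circ\varphi_\epsilon)\circ\mu_\epsilon=\tilde N$ on $[0,T]$, so $\tilde N\circ\varphi_\epsilon\to\tilde N$ almost surely in the Skorokhod $J_1$ topology; letting $T\uparrow\infty$ gives \eqref{cvgD1}.

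Next, \eqref{cvgD2} follows by the continuous mapping theorem. The pointwise map $g\colon w\mapsto z(-1)^{w(\cdot)}$ is continuous on the closed subspace of $\mathbb Z_{\ge0}$-valued cadlag paths (which are automatically step paths): if $w_n\to w$ in $J_1$ with all $w_n$ integer-valued, then, for suitable $\lambda_n\to\mathrm{id}$, $\|w_n\circ\lambda_n-w\|_{\infty,[0,T]}<1$ eventually, and two integer-valued functions at distance $<1$ coincide, so $w_n\circ\lambda_n=w$ and hence $g(w_n)\circ\lambda_n=g(w)$ on $[0,T]$. Since $Z_{\lfloor\cdot/\epsilon\rfloor}^\epsilon=g\bigl(S_{\lfloor\cdot/\epsilon\rfloor}^\epsilon\bigr)$ and the limit $(-1)^{N_\cdot}z=g(N_\cdot)$, \eqref{cvgD1} gives \eqref{cvgD2}.

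For \eqref{cvgD3}, use $(-1)^{S_j^\epsilon}=zZ_j^\epsilon$ and compare the defining sum of $X^\epsilon$ with the integral of the associated piecewise-constant path to get
\begin{align*}
\epsilon X_{\lfloor t/\epsilon\rfloor}^\epsilon
=\epsilon\lfloor y/\epsilon\rfloor-\int_0^t z\,Z_{\lfloor s/\epsilon\rfloor}^\epsilon\,\mathrm d s+R^\epsilon(t),\qquad \sup_{t\ge0}\,|R^\epsilon(t)|\le\epsilon,
\end{align*}
the remainder $R^\epsilon$ arising only from the last, incomplete cell. The primitive functional $w\mapsto\bigl(\int_0^{\cdot}w(s)\,\mathrm d s\bigr)$ is continuous on $\{\pm1\}$-valued paths from the $J_1$ topology into $C([0,\infty))$ with the locally uniform topology: if $w_n\to w$ in $J_1$ with both $\pm1$-valued, then $w_n\circ\lambda_n=w$ on $[0,T]$ eventually, so $\bigl|\int_0^t w_n-\int_0^t w\bigr|\le 2m\,\|\lambda_n-\mathrm{id}\|_\infty$ uniformly in $t\le T$, with $m$ the number of jumps of $w$ on $[0,T]$. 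Combining this with \eqref{cvgD2}, using $z\,Z_s=(-1)^{N_s}$, and absorbing the deterministic shift $\epsilon\lfloor y/\epsilon\rfloor\to y$ and the uniformly vanishing $R^\epsilon$ by Slutsky's theorem gives \eqref{cvgD3} (with a continuous limit, hence also convergence in $D$). The closing identifications of $T_t$, $D_t$, $Z_t$ and $Y_t$ are then immediate.

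I expect the main difficulty to be the transfer in the last step: pushing $J_1$-convergence of the two-valued processes $Z_{\lfloor\cdot/\epsilon\rfloor}^\epsilon$ through the primitive functional, which is not $J_1$-continuous in general. What makes it work is that the limiting direction process $(-1)^{N_\cdot}$ has only unit jumps and the integrands are bounded by $1$, so $J_1$-convergence upgrades to exact equality after a vanishingly small time change and the primitives then converge locally uniformly; the cleanest way to make this rigorous is to invoke the Skorokhod representation theorem and argue pathwise on each compact interval before letting the interval grow to $[0,\infty)$.
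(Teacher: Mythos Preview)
Your proof is correct and more careful than the paper's, but it proceeds along a genuinely different route. For \eqref{cvgD1} the paper simply checks that increments $S^\epsilon_{\lfloor t/\epsilon\rfloor}-S^\epsilon_{\lfloor s/\epsilon\rfloor}$ are Poisson with parameter $(\lfloor t/\epsilon\rfloor-\lfloor s/\epsilon\rfloor)\epsilon\lambda\to\lambda(t-s)$ and are independent, i.e.\ it argues via finite-dimensional distributions and leaves tightness implicit; you instead embed the whole family into a single Poisson process $\tilde N$ by setting $N_j^\epsilon=\tilde N_{j\epsilon}-\tilde N_{(j-1)\epsilon}$, which upgrades the statement to almost-sure $J_1$ convergence and makes the functional claim fully rigorous without any separate tightness step. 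For \eqref{cvgD3} the paper tracks the jump times $n_j^\epsilon$ of $S^\epsilon$, observes $\epsilon n_j^\epsilon\to T_j$, and decomposes the Riemann sum over the intervals $[n_j^\epsilon,n_{j+1}^\epsilon)$ to match $\int_{T_j}^{T_{j+1}}(-1)^{N_s}\,\mathrm d s$; you instead write $\epsilon X^\epsilon_{\lfloor t/\epsilon\rfloor}$ as the primitive of the step process plus a uniformly $O(\epsilon)$ remainder and push \eqref{cvgD2} through the integral functional by exploiting that $J_1$ convergence of $\{\pm1\}$-valued paths becomes exact equality after a vanishing time change. Your approach is cleaner in that it treats all three limits with a single coupling and continuous-mapping machinery; the paper's approach is quicker to write but relies on the reader to fill in the functional-level details.
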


\begin{proof}
For \eqref{cvgD1}, the limiting distribution of an increment $S^\epsilon_{\lfloor t/\epsilon\rfloor}-S^\epsilon_{\lfloor s/\epsilon\rfloor}$ follows by using that the sum of independent Poisson random variables is a Poisson random variable with parameter the sum of the parameters and 
\[
\lim_{\epsilon\to 0}(\lfloor t/\epsilon\rfloor-\lfloor s/\epsilon\rfloor)\epsilon \lambda=\lambda (t-s).
\]
The independence of increments follows from the independence of the random variable $N_n$'s.

Convergence \eqref{cvgD2} follows from the convergence in distribution of integer-valued random variables.

For convergence \eqref{cvgD3} one can introduce the times of increase $n_j^{\epsilon}$ of $S^\epsilon_{\lfloor t/\epsilon\rfloor}$ noting that only the increments by $1$ matter as the increments by more than $1$ are of higher order in $\epsilon$. It follows that $\lim_{\epsilon\to 0}\epsilon n_j^{\epsilon}= T_j$, the $j$'th time of increase of $N_t$. 

The convergence \eqref{cvgD3} follows from $\lim_{\epsilon \to 0}\epsilon \lfloor y/{\epsilon}\rfloor=y$ and the decomposition of the sum appearing in $\epsilon X^\epsilon_{\lfloor t/\epsilon\rfloor}$ over the intervals $[n_{j}^{\epsilon},n_{j+1}^{\epsilon}[$ to obtain the convergence to the integral appearing in $Y_t$ over the intervals $[T_j,T_{j+1}[$.
\end{proof}

Lemma \ref{lemma: classical_scaling_limit} motivates us to consider, in \eqref{eqns: Feynman Formula},   the rescaling of space $x\in \Z$ with $\floor{y/\epsilon}$, time $n\in \N$ with $\floor{n/\epsilon}$ and the coin parameter $\theta>0$ with $\epsilon \lambda$ where $\lambda>0$. We also  rescale the initial state $\psi_0$ as follows:

\begin{theorem}\label{thm: quantum_scaling_limit}
Let $f:\R\times \{\pm 1\}\to \C$ be a function which is $C_0$ in the first entry, that is, $\lim_{y\to \pm \infty}\abs{f(y, \pm 1)} = 0$.  For every $\epsilon>0$, we define the state vector $\psi_0^\epsilon\in \ell^2(\Z\times \{\pm 1\})$ (not necessarily normalized) by
\begin{align}
    \psi_0^\epsilon(x, z) =  f(\epsilon x, z) 
\end{align}
and let $\psi_n^\epsilon$ be the n-th step state of this initial condition with the homogeneous coin $e^{i\epsilon \lambda \sigma_1}$. Then for every $t>0, y\in \R$ and $z = \pm 1$, the point-wise limit
\begin{align}\label{eqns: scaling_quantum_expectation}
   \psi(t, y, z):= \lim_{\epsilon\to 0}  \psi_{[t/\epsilon]}^\epsilon ([y/\epsilon], z) = e^{t\lambda}\E_{}(i^{N_t} f(Y_t, Z_t)\mid (N_0, Y_0, Z_0)=(0, y, z))
\end{align}
exists.
\end{theorem}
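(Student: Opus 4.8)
The plan is to combine the Feynman formula of Theorem \ref{thm: Feynman_Formula} with the path-space convergence in Lemma \ref{lemma: classical_scaling_limit}, so that the rescaled amplitude $\psi_{[t/\epsilon]}^\epsilon([y/\epsilon],z)$ is recognized as an expectation of a functional of a discretized telegraph trajectory that converges to the continuous one. First I would write out, using the Feynman formula \eqref{eqns: Feynman Formula} with coin parameter $\theta = \epsilon\lambda$, time $[t/\epsilon]$ and space $[y/\epsilon]$,
\begin{align*}
  \psi_{[t/\epsilon]}^\epsilon([y/\epsilon],z)
  = e^{[t/\epsilon]\,\epsilon\lambda}\,
    \E\!\left[\, i^{S^\epsilon_{[t/\epsilon]}}\, f\!\big(\epsilon X^\epsilon_{[t/\epsilon]},\, Z^\epsilon_{[t/\epsilon]}\big)
    \,\middle|\, (S_0,Z_0,X_0)=(0,z,[y/\epsilon]) \right],
\end{align*}
where $S^\epsilon, Z^\epsilon, X^\epsilon$ are the processes of Definition \ref{defn: MA Model} built from i.i.d.\ Poisson$(\epsilon\lambda)$ variables (note $\epsilon X^\epsilon$ starts from $\epsilon[y/\epsilon]\to y$, matching the setup of Lemma \ref{lemma: classical_scaling_limit}). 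The prefactor converges: $e^{[t/\epsilon]\epsilon\lambda}\to e^{t\lambda}$. So the whole statement reduces to showing that the expectation converges to $\E[\,i^{N_t} f(Y_t,Z_t)\mid (N_0,Y_0,Z_0)=(0,y,z)\,]$.

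The key step is to pass the limit inside the expectation. By Lemma \ref{lemma: classical_scaling_limit} we have joint weak convergence in $D(0,\infty)$ of $(S^\epsilon_{[\cdot/\epsilon]}, Z^\epsilon_{[\cdot/\epsilon]}, \epsilon X^\epsilon_{[\cdot/\epsilon]})$ to $(N_\cdot, Z_\cdot, Y_\cdot)$; evaluating at the fixed time $t$ (a.s.\ a continuity time of the limit, since a Poisson process jumps at a fixed time with probability zero) gives joint convergence in distribution of the time-$t$ marginals $(S^\epsilon_{[t/\epsilon]}, Z^\epsilon_{[t/\epsilon]}, \epsilon X^\epsilon_{[t/\epsilon]}) \Rightarrow (N_t, Z_t, Y_t)$. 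I would then argue that the map $(s,\zeta,y')\mapsto i^{s}\, f(y',\zeta)$ — or rather $(s \bmod 4, \zeta, y') \mapsto i^{s} f(y',\zeta)$, since $i^s$ depends only on $s\bmod 4$ — is bounded and a.s.\ continuous with respect to the law of $(N_t,Z_t,Y_t)$: $f$ is continuous in its first argument by hypothesis, the coin/direction components are discrete, and the integer-valued component $N_t$ contributes a continuous (indeed locally constant) factor. Continuity of $f$ plus the $C_0$ (hence bounded) assumption on $f$ give a bounded a.s.-continuous test function, so the continuous mapping theorem together with uniform integrability (trivial, as $|i^s f(\cdot,\cdot)| \le \|f\|_\infty$ and the $N_t^\epsilon$-moments are uniformly controlled) yields convergence of the expectations. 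Multiplying by the convergent prefactor $e^{[t/\epsilon]\epsilon\lambda}$ finishes the argument.

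The main obstacle — and the point that needs genuine care rather than invocation of a theorem — is the interaction between the modular factor $i^{S^\epsilon_{[t/\epsilon]}}$ and the limit. The raw sums $S^\epsilon_{[t/\epsilon]}=\sum_{j\le[t/\epsilon]}N_j^\epsilon$ diverge (their mean is $\sim \lambda t/\epsilon$), so one cannot say $i^{S^\epsilon_{[t/\epsilon]}}$ converges as a function of the unreduced sum; what one must use is that only $S^\epsilon_{[t/\epsilon]}\bmod 4$ matters, and that this is governed by the jumps of the limiting Poisson process $N_t$. Concretely, the jumps of $S^\epsilon_{[\cdot/\epsilon]}$ of size $\geq 2$ contribute negligibly (their expected number on $[0,t]$ is $O(\epsilon)$), so with probability tending to one $S^\epsilon_{[t/\epsilon]}$ and the number of unit increments agree mod $4$; and the latter converges in distribution to $N_t$. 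This is exactly the mechanism already recorded in the proof of Lemma \ref{lemma: classical_scaling_limit} (``only the increments by $1$ matter''), so I would phrase the continuity argument on the space $\{0,1,2,3\}\times\{\pm1\}\times\R$ for the reduced process $(S^\epsilon_{[t/\epsilon]}\bmod 4,\, Z^\epsilon_{[t/\epsilon]},\, \epsilon X^\epsilon_{[t/\epsilon]})$, whose limit is $(N_t\bmod 4, Z_t, Y_t)$, and where the test function $(\bar s,\zeta,y')\mapsto i^{\bar s}f(y',\zeta)$ is manifestly bounded and continuous (the first two coordinates being discrete and the third entering only through the continuous $f$). That reduction is the crux; everything else is bookkeeping.
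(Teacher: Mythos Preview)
Your approach is correct and matches the paper's: write out the Feynman formula with $\theta=\epsilon\lambda$, then pass to the limit via Lemma~\ref{lemma: classical_scaling_limit}. The paper's proof is terser (it simply invokes the weak convergence without spelling out the continuous mapping / bounded test function argument), but the substance is the same.

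One correction, though: the ``main obstacle'' in your final paragraph is based on a miscalculation. Since each $N_j^\epsilon\sim\mathrm{Poisson}(\epsilon\lambda)$, the raw sum $S^\epsilon_{[t/\epsilon]}=\sum_{j\le[t/\epsilon]}N_j^\epsilon$ is Poisson with parameter $[t/\epsilon]\cdot\epsilon\lambda\to\lambda t$; its mean is $\sim\lambda t$, not $\sim\lambda t/\epsilon$, and it does not diverge. This is exactly the content of \eqref{cvgD1}: $S^\epsilon_{[t/\epsilon]}$ itself converges in law to $N_t$, no reduction mod~$4$ needed. Hence $i^{S^\epsilon_{[t/\epsilon]}}$ is a bounded function of an integer-valued random variable converging in distribution, and the continuous mapping argument from your second paragraph already handles it. Your last paragraph can simply be deleted.
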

\begin{proof}
    By the Feynman formula \eqref{eqns: Feynman Formula}, we have 
\begin{align*}
    \psi_{[t/\epsilon]}^\epsilon ([y/\epsilon], z) &= e^{[t/\epsilon]\cdot \epsilon \lambda} \E\left[i^{S_{[t/\epsilon]}^\epsilon} \psi_0^\epsilon([y/\epsilon]- z T_{[t/\epsilon]}^\epsilon, z(-1)^{S_{[t/\epsilon]}^\epsilon}) \right]\\
    &= e^{[t/\epsilon]\cdot \epsilon \lambda} \E\left[i^{S_{[t/\epsilon]}^\epsilon} f(\epsilon[y/\epsilon]- z\epsilon T_{[t/\epsilon]}^\epsilon, z(-1)^{S_{[t/\epsilon]}^\epsilon})\right]. 
\end{align*}
Together with the weak convergence result in Lemma \ref{lemma: classical_scaling_limit}, we have
\begin{align*}
    \psi(t, y, z) &= \lim_{\epsilon\to 0} \psi_{[t/\epsilon]}^\epsilon ([y/\epsilon], z) = e^{t\lambda}\E\left[i^{N_t} f(y - z T_t, z(-1)^{N_t})\right] \\
    &= e^{t\lambda} \E\left[i^{N_t} f(Y_t, Z_t)\mid Y_0 = y, Z_0 = z)\right] . 
\end{align*}
\end{proof}

It is now very natural to write a system of quantum transport PDEs solved by $\psi$, for which we have the probabilistic representation \eqref{eqns: scaling_quantum_expectation}. In fact, this type of PDEs, known as Dirac PDEs, has been studied in the Physics literature and we refer to the papers \cite{MS4-2018} and \cite{MS-2020} where nonlinear PDEs are also discussed.

\begin{proposition} \label{prop: 4.3}
   The function $\psi(t, y, z)$ defined in Theorem \ref{thm: quantum_scaling_limit} is a solution to the linear system of PDEs 
\begin{align}\label{PDE1}
\left\{
\begin{array}{ccc}
    \partial_t \psi_+&=&-\partial_y \psi_++i\lambda \psi_-, \\
    \partial_t \psi_-&=&\partial_y \psi_-+i\lambda\psi_+
    \end{array}
    \right.
\end{align}
with $\psi(0,y,z)=f(y,z)$ and where $\psi_{\pm}(t,y):=\psi(t,y,\pm 1)$. Additionally, we have the conservation property: 
\begin{equation}\label{conservationlaw}
\int_{-\infty}^\infty(|\psi_+(t,y)|^2+|\psi_-(t,y)|^2) \mathrm d y=1.
\end{equation}
for every $t>0$ whenever $\int_{-\infty}^\infty \abs{f(y, 1)}^2 + \abs{f(y, -1)}^2\mathrm d y  = 1$. 
\end{proposition}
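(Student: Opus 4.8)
The plan is to derive the system \eqref{PDE1} directly from the probabilistic representation \eqref{eqns: scaling_quantum_expectation} by conditioning on the first jump of the Poisson process, and then to deduce the conservation law \eqref{conservationlaw} from the resulting PDE by an energy identity. \emph{Step 1 (a Duhamel equation).} Let $\tau_1\sim\mathrm{Exp}(\lambda)$ be the first jump time of the Poisson process $N_\cdot$ appearing in \eqref{eqns: scaling_quantum_expectation} and condition on it. On $\{\tau_1>t\}$ one has $N_t=0$, hence $i^{N_t}=1$, $Z_t=z$ and $Y_t=y-zt$; since the prefactor $e^{t\lambda}$ cancels $\P(\tau_1>t)=e^{-\lambda t}$, this event contributes exactly $f(y-zt,z)$. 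On $\{\tau_1=s\}$ with $s\in(0,t)$, the process is at $(Y_s,Z_s)=(y-zs,-z)$ with $i^{N_s}=i$, and by the strong Markov property of the piecewise-deterministic process $(Y_\cdot,Z_\cdot)$ together with the independence of $N_\cdot-N_s$ from $\mathcal F_s$, the remaining evolution is an independent copy of the representation started from $(y-zs,-z)$ and run for time $t-s$. Collecting prefactors yields
\begin{equation}\label{eq: mild}
 \psi(t,y,z)=f(y-zt,z)+i\lambda\int_0^t \psi(t-s,\,y-zs,\,-z)\,\mathrm d s .
\end{equation}
The initial condition $\psi(0,y,z)=f(y,z)$ is read off from \eqref{eq: mild} (or from \eqref{eqns: scaling_quantum_expectation} at $t=0$). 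Finally, take $z=\pm1$ and restrict \eqref{eq: mild} to the characteristic $y\mapsto y\mp t$: the integral then becomes $i\lambda\int_0^t\psi_\mp(r,\cdot)\,\mathrm d r$ along that characteristic, so differentiating in $t$ gives $\partial_t\psi_\pm\pm\partial_y\psi_\pm=i\lambda\psi_\mp$, which is precisely \eqref{PDE1}.

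\emph{Step 2 (energy identity).} Assume $f\in L^2(\R\times\{\pm1\})$, so the right-hand side of \eqref{conservationlaw} is finite, with enough regularity and decay that $\psi_\pm(t,\cdot)$ and $\partial_y\psi_\pm(t,\cdot)$ lie in $L^2(\R)$ and vanish at $\pm\infty$ for each $t$. From \eqref{PDE1} and $\partial_t|\psi_\pm|^2=2\Re(\overline{\psi_\pm}\,\partial_t\psi_\pm)$, together with $2\Re(\overline{\psi_\pm}\,\partial_y\psi_\pm)=\partial_y|\psi_\pm|^2$, one obtains $\partial_t|\psi_+|^2=-\partial_y|\psi_+|^2+2\lambda\Re(i\overline{\psi_+}\psi_-)$ and $\partial_t|\psi_-|^2=\partial_y|\psi_-|^2+2\lambda\Re(i\overline{\psi_-}\psi_+)$. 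Adding these and integrating in $y$, the transport terms $\mp\int_\R\partial_y|\psi_\pm|^2\,\mathrm d y$ vanish by decay, leaving
\begin{align*}
 \frac{\mathrm d}{\mathrm d t}\int_\R\big(|\psi_+(t,y)|^2+|\psi_-(t,y)|^2\big)\,\mathrm d y
 &=2\lambda\,\Re\!\Big(i\int_\R\big(\overline{\psi_+}\psi_-+\overline{\psi_-}\psi_+\big)\,\mathrm d y\Big)\\
 &=2\lambda\,\Re\!\Big(2i\,\Re\!\int_\R\overline{\psi_+}\psi_-\,\mathrm d y\Big)=0,
\end{align*}
since $i$ times a real number is purely imaginary. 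Hence this $L^2$-mass is constant in $t$ and equals its value $\int_\R(|f(y,1)|^2+|f(y,-1)|^2)\,\mathrm d y=1$ at $t=0$.

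\emph{Main obstacle.} The genuinely delicate point is regularity: Theorem \ref{thm: quantum_scaling_limit} only produces $\psi$ as a pointwise limit, whereas the classical reading of \eqref{PDE1} and the integration by parts in Step 2 require $\psi$ to be $C^1$ in $(t,y)$. This is handled either by strengthening the hypothesis --- take $f$ continuously differentiable with $f$ and $\partial_y f$ vanishing at $\pm\infty$, propagate $C^1$-regularity through the Volterra-type equation \eqref{eq: mild} by successive approximations, and use that \eqref{PDE1} has propagation speed $\le 1$ (so supports, hence the integrability of $\psi_\pm,\partial_y\psi_\pm$, stay controlled when $f$ is compactly supported; a density argument then covers the general case) --- or by reading \eqref{PDE1} in the mild sense, in which case \eqref{eq: mild} is itself the assertion. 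For the conservation law alone there is an alternative that bypasses the PDE entirely: since $W=SC$ is unitary, $\|\psi_n^\epsilon\|_{\ell^2(\Z\times\{\pm1\})}=\|\psi_0^\epsilon\|_{\ell^2}$ for every $n$, and $\epsilon\|\psi_0^\epsilon\|_{\ell^2}^2=\epsilon\sum_{x,z}|f(\epsilon x,z)|^2\to\int_\R(|f(y,1)|^2+|f(y,-1)|^2)\,\mathrm d y=1$ as a Riemann sum, whence $\epsilon\sum_{x,z}|\psi_{[t/\epsilon]}^\epsilon(x,z)|^2=1+o(1)$; passing to the limit $\epsilon\to0$ with $\psi_{[t/\epsilon]}^\epsilon([y/\epsilon],z)\to\psi(t,y,z)$ then gives \eqref{conservationlaw}, the only gap being a no-escape-of-mass (uniform integrability) argument to justify interchanging the limit with the sum.
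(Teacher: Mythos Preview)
Your proof is correct and the conservation argument (Step 2) is essentially the paper's: both compute $\partial_t(|\psi_+|^2+|\psi_-|^2)=\partial_y(|\psi_-|^2-|\psi_+|^2)$, integrate in $y$, and use decay at infinity. The paper justifies that decay directly from the probabilistic representation \eqref{eqns: scaling_quantum_expectation} and the hypothesis $f\in C_0$, which you might cite instead of imposing extra assumptions.

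Your derivation of the PDE, however, takes a genuinely different route. The paper uses a \emph{small-time} expansion: it writes $\psi(t+h,y,z)$ via the Markov property at the intermediate time $h$, conditions on the events $\{N_h=0\}$ and $\{N_h=1\}$ (discarding $\{N_h\ge 2\}$ as $O(h^2)$), exploits that the single jump time given $N_h=1$ is uniform on $[0,h]$, and then subtracts $\psi(t,y,z)$, divides by $h$, and passes to the limit---an infinitesimal-generator style argument. You instead condition on the \emph{first jump time} $\tau_1$, obtain the exact Duhamel (mild) equation \eqref{eq: mild}, and then differentiate along characteristics. Your route has the advantage of producing the integral equation, which is both the natural mild formulation and a clean starting point for the regularity bootstrap you sketch; the paper's route is slightly more direct in landing on the differential form but tacitly assumes the $C^1$-regularity needed to take the limit $h\to 0$. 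Your ``Main obstacle'' paragraph makes this gap explicit and offers two remedies (propagating $C^1$ through the Volterra equation, or reading \eqref{PDE1} in mild form), neither of which the paper discusses; this is a worthwhile addition. Your alternative conservation argument via the discrete unitarity of $W$ is also not in the paper and is a nice observation, though, as you note, it still requires a tightness step.
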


\begin{proof}
 Let $h>0$ and denoting by $\E_{0, y, z}$ the expectation over the initial condition $(N_0, Y_0, Z_0) = (0, y, z)$, we have 
\begin{align}
\label{eqns: pde_proof_1} 
     & \qquad \qquad \psi(t+h, y, z) 
    = e^{(t+h)\lambda}\E_{0, y, z}\Big[ i^{N_{t+h}} f\Big(y - z\int_0^{t+h}(-1)^{N_s}\mathrm d s, z(-1)^{N_{t+h}} \Big) \Big]\\ 
    &= e^{(t+h)\lambda}\E_{0, y, z}\Big[i^{N_h} \E\Big[i^{N_{t+h} - N_h} f\Big(Y_h- Z_h\int_h^{t+h} \!\!\! (-1)^{N_s - N_h}\mathrm d s, Z_h(-1)^{N_{t+h} - N_h}\Big) \Big \vert  \F_h \Big]\Big],\notag
\end{align}
where $\F_h:= \sigma(N_s,\,s\leq h)$.  Using the fact that $(M_t):=\{N_{t+h} - N_h\}_{t\geq 0}$ is a Poisson process with parameter $\lambda$ and initial condition $M_0 = 0$, and is independent of $\F_h$, we can compute: 
\begin{align}\label{eqns: pde_proof_2}
    &\E\left[i^{N_{t+h} - N_h} f\Big(Y_h - Z_h\int_h^{t+h} (-1)^{N_s - N_h}\mathrm d s, Z_h(-1)^{N_{t+h} - N_h}\Big)\Big \vert  \F_h\right]\\ \notag 
    &= \E\left[i^{M_t} f\Big(Y_h - Z_h\int_h^{t+h} (-1)^{M_{s-h}}\mathrm d s, Z_h(-1)^{M_s} \Big)\right] = e^{-t\lambda}\psi(t, Y_h, Z_h). \notag
\end{align}
Substituting \eqref{eqns: pde_proof_2} into  \eqref{eqns: pde_proof_1} gives 
\begin{equation*}\label{eqns:pdelemma}
\begin{split}
     \psi(t+h, y, z) &= e^{h\lambda}\E_{0,y,z}(i^{N_h} \psi(t, Y_h, Z_h))\\ \notag 
    & =  e^{h\lambda}\P(N_h=0)\E_{0, y, z}(i^{N_h} \psi (t, Y_h, Z_h)\mid N_h = 0) \\
    & \qquad {} + e^{h\lambda}\P(N_h=1)\E_{0, y, z}(i^{N_h} \psi(t, Y_h, Z_h)\mid N_h = 1) + O(h^2)\\ \notag 
    &= \psi(t, y-hz, z) + ih\lambda \E_{0, y, z}\Big[\psi \Big(t, y - \int_0^h(-1)^{N_s}\mathrm d s, -z \Big) \Big \vert N_h =1 \Big] + O(h^2).
    \end{split}
\end{equation*}
Using the fact that the first jump time $T$ of $(N_t)$ follows the conditional distribution $T\mid N_h = 1\sim U[0, h]$, which is the uniform distribution over $[0, h]$, we have:
\begin{equation} 
\begin{split} \label{eqns: pde_proof_3}
    & \qquad \E_{0, y, z}\Big[\psi(t, y - z\int_0^h(-1)^{N_s}\mathrm d s, -z)\mid N_h=1\Big] \\
    & = \E_{0, y, z}[\psi(t, y-z(T - (h-T)), -z)\mid T\sim U[0, h]]
    \\
    &=\frac{1}{h}\int_0^h \psi(t, y+zh-2zs, -z) \mathrm d s.  
\end{split}
\end{equation}
Substituting \eqref{eqns: pde_proof_3} into  \eqref{eqns: pde_proof_2}, we have:
\begin{align}
    \psi(t+h, y, z) = \psi(t, y-hz, z) + i\lambda \int_0^h \psi(t, y+zh-2zs, -z)\mathrm d s + O(h^2).
\end{align}

Subtracting $\psi(t, y, z)$ on both sides, dividing by $h$, and sending $h\to 0$ gives
\begin{align}
    \p_t \psi(t, y, z) = -z\p_y\psi (t, y, z) + i\lambda \psi(t, y, -z),
\end{align}
for all $t>0, y\in \R$ and $z= \pm 1$, which is \eqref{PDE1}.

Regarding the conservation property \eqref{conservationlaw},   one first observes that 
\begin{equation}\label{limits}
\lim_{\abs{y}\to\infty}\psi_{+}(t,y)=\lim_{\abs{y}\to \infty}\psi_{-}(t, y) = 0,
\end{equation}
which follows from the probabilistic representation \eqref{eqns: scaling_quantum_expectation} by the assumption $f\in C_0$ and the fact that $\lim_{y\to \pm \infty}Y_t = \pm \infty$  almost surely using the definition \eqref{cvgD3} of $Y_t$.  Then by a direct computation using \eqref{PDE1}, one gets
\begin{align}
\partial_t(|\psi_+|^2+|\psi_-|^2)
&=\partial_y(|\psi_-|^2-|\psi_+|^2), 
\\
\mbox{which implies}\quad \p_t \int_{-\infty}^\infty (|\psi_+|^2+|\psi_-|^2)dy&=\int_{-\infty}^\infty \partial_y(|\psi_-|^2-|\psi_+|^2)dy. \notag 
    \end{align}
Using integration by part and \eqref{limits} gives that 
  $\int_{-\infty}^\infty(|\psi_+(t,y)|^2+|\psi_-(t,y)|^2)\mathrm d y$ is constant in time. Therefore, \eqref{conservationlaw} follows from $\int_{-\infty}^\infty \abs{f(y, 1)}^2 + \abs{f(y, -1)}^2 \mathrm d y  = 1$ at time $t=0$.
\end{proof}

\begin{remark}
If  we define $u(t,y,z)=e^{-i\lambda t}\psi(t,y,z)$ by removing a global phase $e^{i\lambda t}$, then we can obtain that: 
\begin{align}
    u(t,y,z)=e^{\lambda(1-i) t}\E[i^{N_t} f(Y_t, Z_t)\mid (N_0, Y_0, Z_0) = (0, y, z)],
\end{align}
which satisfies the system of PDEs
\begin{align}\label{PDE2}
\left\{
\begin{array}{ccc}
    \partial_t u_+&=&-\partial_y u_+-i\lambda (u_+-u_-)\\
    \partial_t u_-&=&\partial_y u_-+i\lambda(u_+-u_-)
    \end{array}\right.
\end{align}
with $u(0,y,z)=f(y,z)$ and where $u_{\pm}(t,y):=u(t,y,\pm 1)$. 
This system of equations is similar to a linear Boltzmann transport system (see \cite{Kac_1974}),  except that the interaction term is now a phase interaction term; it is termed a system of "quantum transport equations" in \cite{Molfetta_Debbasch_2012}. Our novelty here is to provide a probabilistaic formulation of the solution to the PDE. 
\end{remark}

\begin{remark}
    By rescaling time, $t\to vt$, with a velocity parameter $v>0$, and rescaling the Poisson intensity, $\lambda\to \frac{\lambda}{v}$, the system \eqref{PDE1} becomes
    \begin{align}\label{PDE3}
\left\{
    \begin{array}{ccc}
    \partial_t \psi_+&=&-v\partial_y \psi_++i\lambda \psi_-, \\
    \partial_t \psi_-&=&v\partial_y \psi_-+i\lambda\psi_+. 
    \end{array}
    \right.
\end{align}
\end{remark}

\begin{remark} In contrast to the discrete analogue \eqref{eqns: Feynman Formula} (see the discussion in section \ref{sec: MonteCarloSimulations}), the limit \eqref{eqns: scaling_quantum_expectation} is suitable for the Monte Carlo simulation: for $t \ge 0 $, $y \in \mathbb R$ 
\begin{align}  \label{eq: MonteCalro2}
\psi(t, y, \pm 1) = \psi_{\pm} (t, y) \approx e^{n\theta}\frac{1}{M}\sum_{k=1}^M i^{N^{(k)}_t} f\Big( y \mp \int^{t}_{0} (-1)^{N^{(k)}_{s}} \mathrm d s, \pm (-1)^{N^{(k)}_{t}}\Big), 
\end{align}
where $(N^{(k)}_\cdot, k = 1, \ldots , M)$ are independent Poisson processes with intensity $\lambda$. With $f(y, 1) = f(y,-1) = e^{- \lvert y\rvert/2}/2$, $y \in \mathbb R$, $\lambda = 0.1$, $M=5000$, the estimated probability density function $\lvert \psi_+(t,y)\rvert^2 + \lvert \psi_-(t,y) \rvert^2$ from \eqref{eq: MonteCalro2} is given in Figure \ref{fig: Telegrapher} for $t = 5$ and $t = 10$. 
\end{remark}
Clearly, our probabilistic representation \eqref{eqns: scaling_quantum_expectation} offers a novel way to compute by Monte Carlo technique the solution to the quantum transport PDE \eqref{PDE1}.

\begin{figure}
\begin{tabular}{cc}
\includegraphics[scale=.4]{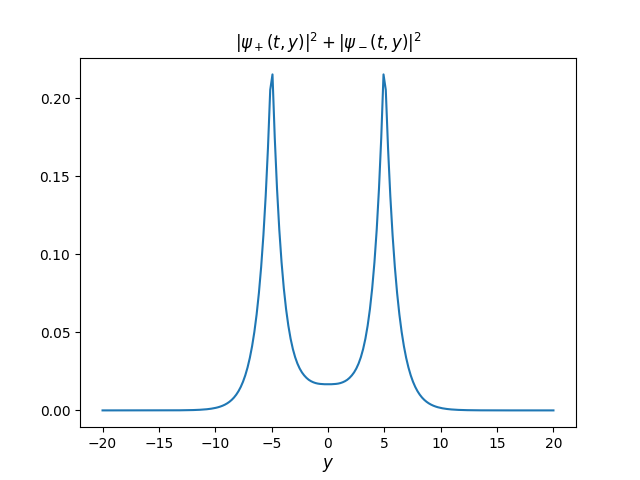}
 & 
\includegraphics[scale=.4]{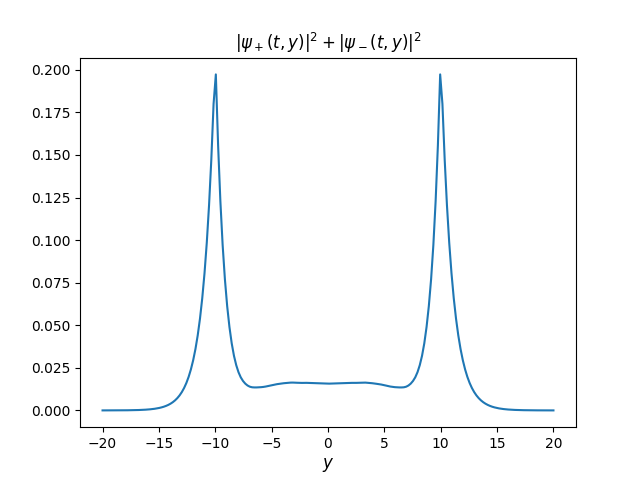} 
\end{tabular}
\caption{Monte Carlo simulation of \eqref{eq: MonteCalro2} for $t = 5$ (left) and $t = 10$ (right).}\label{fig: Telegrapher}
\end{figure}

 \section{Extensions to other quantum walks}\label{sec: extensions}
In this section, we propose generalization of the Feynman formula (Theorem \ref{thm: Feynman_Formula}) by considering one-dimensional quantum walks with other coin operators, which include more general homogeneous coins, time-dependent coins (see, for example, \cite{Banuls2006}) and site-dependent coins (see, for example, \cite{WojcikEtAl2004}, \cite{MS-2020}, \cite{MS4-2018} and \cite{LindenSharam2009}), and discuss, if possible, their asymptotic behavior and limiting PDEs. We show that all such quantum walks admit a Feynman-type formula, which justifies the robustness of writing a Feynman-type formula. 

In the following $(S_n, Z_n, X_n)_{n\geq 1}$ are always the discrete-time stochastic processes defined in Definition \ref{defn: MA Model} which are driven by the sequence of random variables $(N_j)_{j=1}^\infty$. To write more general Feynman formulae, the sequence $(N_j)_{j=1}$ does not necessarily consist of independent Poisson variables and we shall specify our assumptions on $(N_j)$ for each Feynman formula we derive. Meanwhile we define the filtration $\{\F_n:= \sigma(N_1, \cdots, N_n)\}_{n\geq 1}$ and denote $\E_{s, x, z}$ the expectation conditional on the initial value $(S_0, X_0,Z_0) = (s, x, z)$ for every $(s,x, z)\in \{0, 1, 2, 3\}\times \{\pm 1\}\times \Z $. 
Furthermore, given a sequence of integers $(k_n)_{n\geq 1}$, we define as in Lemma \ref{lemma: Poisson-like_coefficients} the sequences of real numbers $(z_n, x_n)$ by $z_m:= (-1)^{k_m}z_{m-1}$ and $x_m:= x_0-\sum_{j=0}^{m-1} z_j $ for $m\geq 1$ with $(z_0, x_0):=(z, x)$. 
\subsection{General unitary coins}
General unitary matrices can be written in the form 
\begin{align}\label{eqns: general_coin_defn}
    e^{i\phi}Z_\beta e^{i\theta\sigma_1}Z_\alpha &= 
e^{i\phi}\begin{bmatrix}
    \beta & 0\\
    0 & \beta^{-1}
\end{bmatrix}
\begin{bmatrix}
    \cos \theta & i \sin \theta \\
    i \sin \theta & \cos\theta
\end{bmatrix}
\begin{bmatrix}
    \alpha & 0 \\
    0 & \alpha^{-1}
\end{bmatrix}, \\
\quad \mbox{with}\quad  
Z_\omega&:=
\begin{bmatrix}
    \omega & 0\\
    0 & \omega^{-1}
\end{bmatrix}\quad \forall \omega\in \C,
\end{align}
where  $(\alpha, \beta)\in \C^2, \,\abs{\alpha}=\abs{\beta}=1 $ and $ \phi, \theta\in \R$. In this section we are generalizing the Feynman formula in Theorem \ref{thm: Feynman_Formula} to the case of arbitrary general unitary matrices.

In the following, we write $\psi_n^{\phi; \beta, \alpha; \theta}$ to denote the nth step state of the homogeneous quantum walk using the coin $e^{i\phi}Z_\beta e^{i\theta \sigma_1} Z_\alpha$, and $\psi_n^{\theta}:= \psi_n^{0; 1, 1; \theta}$ is the nth step state of the quantum walk with homogeneous coin $e^{i\theta \sigma_1}$. 

\begin{theorem}[Feynman formula for general coins]
For every $x\in \Z, z\in \{\pm 1\}$ and initial state $\psi_0 \in \ell_2(\Z)\otimes \ell_2(\{\pm 1\})$, we have 
\begin{align}\label{eqns: Feynman_Forumla_general_coin}
    \psi_n^{\phi; \beta, \alpha ;\theta}(x, z) = e^{in\phi}{e^{n\theta}} \E_{0,z,x}\left[i^{S_n}(\alpha \beta)^{x-X_n}\alpha^{Z_n-{z}} \psi_0(X_n, Z_n)\right], 
\end{align} 
where the driving random variables are such that $N_j\sim Poi(\theta)$ for every $j\geq 1$ and are independent. 
\end{theorem}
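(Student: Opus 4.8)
The plan is to mimic the two-step proof of Theorem~\ref{thm: Feynman_Formula}: first establish a deterministic Feynman-path expansion for $\psi_n^{\phi;\beta,\alpha;\theta}$ in the spirit of Lemma~\ref{lemma: Poisson-like_coefficients}, and then recognise the Poisson normalisation to rewrite the resulting path sum as a conditional expectation over $(S_n,X_n,Z_n)$.

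First I would work out the one-step transfer operator for the general coin $C=e^{i\phi}Z_\beta e^{i\theta\sigma_1}Z_\alpha$. Since $Z_\omega e_z=\omega^{z}e_z$ for $z=\pm1$ and $e^{i\theta\sigma_1}e_z=\sum_{k\ge0}\frac{(i\theta)^k}{k!}e_{(-1)^kz}$, one gets $We_xe_z=e^{i\phi}\alpha^{z}\sum_{k\ge0}\frac{(i\theta)^k}{k!}\beta^{(-1)^kz}e_{x+(-1)^kz}e_{(-1)^kz}$; expanding an arbitrary state in the standard basis and collecting the coefficient of $e_xe_z$ exactly as in the derivation of \eqref{eqns: one-step-proof} gives
\[
(W\psi)(x,z)=e^{i\phi}\beta^{z}\sum_{k=0}^\infty\frac{(i\theta)^k}{k!}\,\alpha^{(-1)^kz}\,\psi\big(x-z,(-1)^kz\big).
\]
Iterating this identity $n$ times on $\psi_n=W^n\psi_0$ along the path $(z_0,z_1,\dots,z_n)$ associated with the summation indices $(k_1,\dots,k_n)$ (so $z_m=(-1)^{k_m}z_{m-1}$, $x_m=x_0-\sum_{j=0}^{m-1}z_j$), the $\ell$-th application contributes a global phase $e^{i\phi}$, a factor $\beta^{z_{\ell-1}}$ from the outer diagonal block $Z_\beta$ and a factor $\alpha^{z_{\ell}}$ from the inner block $Z_\alpha$, so the total coefficient in front of $\psi_0(x_n,z_n)$ is $e^{in\phi}\frac{(i\theta)^{k_1+\cdots+k_n}}{k_1!\cdots k_n!}\,\beta^{\sum_{j=0}^{n-1}z_j}\,\alpha^{\sum_{j=1}^{n}z_j}$. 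The \emph{key simplification} is that these boundary sums telescope: $\sum_{j=0}^{n-1}z_j=x_0-x_n$ directly from the definition of $x_n$, hence $\sum_{j=1}^{n}z_j=(x_0-x_n)-z_0+z_n$, and the phase product collapses to $(\alpha\beta)^{x_0-x_n}\alpha^{z_n-z_0}$. This yields
\[
\psi_n^{\phi;\beta,\alpha;\theta}(x,z)=e^{in\phi}\sum_{k_1,\dots,k_n=0}^\infty\frac{(i\theta)^{k_1+\cdots+k_n}}{k_1!\cdots k_n!}\,(\alpha\beta)^{x-x_n}\alpha^{z_n-z}\,\psi_0(x_n,z_n).
\]

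Finally, as in the proof of Theorem~\ref{thm: Feynman_Formula}, I would insert the factors $e^{-\theta}\theta^{k_j}/k_j!$ and compensate with $e^{n\theta}$, so that the multiple sum becomes the expectation over the i.i.d.\ $\mathrm{Poisson}(\theta)$ increments $(N_j)$ of Definition~\ref{defn: MA Model}, giving precisely \eqref{eqns: Feynman_Forumla_general_coin}; the rearrangement of sums is legitimate because $|\alpha|=|\beta|=1$ and $\psi_0$ is bounded (it lies in $\ell_2$), so the series converges absolutely just as in Lemma~\ref{lemma: Poisson-like_coefficients}. The only genuinely delicate point is the exponent bookkeeping --- keeping track of whether each $Z_\alpha$ or $Z_\beta$ factor is evaluated at the direction before or after the coin at a given step, and then spotting that the accumulated sums $\sum z_j$ telescope into $x-X_n$ and $Z_n-z$; everything else is a routine repetition of the rotation-coin argument.
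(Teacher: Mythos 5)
Your proof is correct and follows the same two-step strategy as the paper: first derive the Feynman-path expansion analogous to Lemma~\ref{lemma: Poisson-like_coefficients} (your one-step transfer formula and iteration recover exactly the paper's \eqref{eqns: general_coin_expansion}), then factor out $e^{n\theta}$ to produce the Poisson expectation. The explicit telescoping $\sum_{j=0}^{n-1}z_j = x - X_n$ and $\sum_{j=1}^{n}z_j = (x-X_n)+Z_n-z$ is precisely the bookkeeping the paper leaves implicit when asserting the result ``follows from Definition~\ref{defn: MA Model},'' so you have filled in the gap cleanly rather than deviated from it.
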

\begin{proof}
Similarly to the proof of Lemma \ref{lemma: Poisson-like_coefficients}, we can derive that 
\begin{equation}\label{eqns: general_coin_expansion}
\begin{split}
   & \psi_n^{\phi; \beta, \alpha; \theta}(x, z) \\
    & =e^{in\phi}\sum_{k_1, \cdots, k_n=0}^\infty \frac{(i\theta)^{k_1+\cdots + k_n}}{k_1 ! \cdots k_n!} \alpha^{(-1)^{k_1}z + \cdots + (-1)^{k_1+\cdots + k_n}z} \\
    & \qquad\qquad\qquad\times \beta ^{z+(-1)^{k_1}z+\cdots + (-1)^{k_1+\cdots + k_{n-1}}}\\
     & \qquad\qquad\qquad\times \psi_0(x - z -(-1)^{k_1}z - \cdots -(-1)^{k_1+\cdots + k_{n-1}}z, z(-1)^{k_1 + \cdots + k_n}z),  
\end{split}    
\end{equation}
    for every $x\in \Z$ and $z\in \{\pm 1\}$. The result follows by taking out a factor or $e^{n\theta}$ as in the proof of Theorem \ref{thm: Feynman_Formula} and from Definition \ref{defn: MA Model}. 
\end{proof}
As in \eqref{eqns: Feynman_Formula_Probability_difference}, if we denote $\psi_n^{\xi; \phi; \beta, \alpha; \theta}$ the nth step state of the respective quantum walk with initial condition $\psi^\xi_0(x, z):= \I_{x=0, z = \xi}$ for every $\xi \in \{\pm 1\}$, then \eqref{eqns: Feynman_Forumla_general_coin} further reduces to
\begin{align}\label{eqns: general_rotation_coin_relation}
    \psi_{n}^{\xi; \phi; \beta, \alpha ;\theta}(x, z)& = e^{in\phi}e^{n\theta} (\alpha\beta)^x \alpha^{\xi-z}\E_{0,x,z}(i^{S_n}\I_{X_n =0, Z_n = 0}) \\
    &= e^{in\phi}(\alpha\beta)^x \alpha^{\xi-z} \psi_n^{\xi; \theta}(x, z). \notag 
\end{align}
This immediately gives us a way to extend the ballistic weak limit of Theorem  \ref{thm: ballistic_limit_for_rotation} from rotation coins $e^{i\theta \sigma_1}$ to general coins:
\begin{corollary}
 With the initial state, 
\begin{align}
    \psi_0(x, z):= R\I_{x=0, z=1} + L\I_{x=0, z= -1}  \quad \mbox{where}\quad (R, L)\in \C^2 \quad \mbox{with}\quad \abs{R}^2 + \abs{L}^2=1,
\end{align}
we have $\frac{1}{n}\Xi_n \Rightarrow Z$ where $Z$ is a real-valued random variable with probability density defined by
\begin{align}
    f_K(a; \theta) \{1+a[\abs{R}^2 - \abs{L}^2+2\tan \theta\Im(\alpha^{2}R\overline{L})a ]\}
\end{align}
\end{corollary}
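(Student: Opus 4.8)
The plan is to reduce the statement to Theorem \ref{thm: ballistic_limit_for_rotation} by exploiting the identity \eqref{eqns: general_rotation_coin_relation}. First I would note that, because $\abs{\alpha}=\abs{\beta}=1$ and $\abs{e^{i\phi}}=1$, the prefactor $e^{in\phi}(\alpha\beta)^x\alpha^{\xi-z}$ appearing there is unimodular; consequently $\abs{\psi_n^{\xi;\phi;\beta,\alpha;\theta}(x,z)}^2=\abs{\psi_n^{\xi;\theta}(x,z)}^2$ for every $\xi,x,z$ and every $n$. Thus for the pure initial states $\psi_0^{\pm}$ the position distribution $\Xi_n$ is identical to the one of the $e^{i\theta\sigma_1}$ walk, and only the cross term arising from the mixed initial condition can carry a new contribution.

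Next, for $\psi_0=R\psi_0^{+}+L\psi_0^{-}$ I would use linearity of $W$ to write $\psi_n=R\psi_n^{+;\phi;\beta,\alpha;\theta}+L\psi_n^{-;\phi;\beta,\alpha;\theta}$ and expand $\abs{\psi_n(x,z)}^2$ exactly as in \eqref{eqns: decomposing_cdf}. The pure terms $\abs{R}^2\abs{\psi_n^{+;\ldots}(x,z)}^2$ and $\abs{L}^2\abs{\psi_n^{-;\ldots}(x,z)}^2$ are unchanged by the previous paragraph. For the cross term, applying \eqref{eqns: general_rotation_coin_relation} with $\xi=+1$ and $\xi=-1$ makes the factors $e^{in\phi}$ and $(\alpha\beta)^x$ cancel against their conjugates, while $\overline{\alpha}=\alpha^{-1}$ gives $\alpha^{1-z}\,\overline{\alpha^{-1-z}}=\alpha^{1-z}\alpha^{1+z}=\alpha^{2}$. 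Hence
\[
\psi_n^{+;\phi;\beta,\alpha;\theta}(x,z)\,\overline{\psi_n^{-;\phi;\beta,\alpha;\theta}(x,z)}=\alpha^{2}\,\psi_n^{+;\theta}(x,z)\,\overline{\psi_n^{-;\theta}(x,z)},
\]
and, importantly, the residual phase $\alpha^{2}$ is independent of $z$, so it factors out of the sum over $z=\pm1$ in \eqref{eqns: decomposing_cdf}.

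Finally I would substitute this into \eqref{eqns: quantum_walk_cdf}–\eqref{eqns: decomposing_cdf} and invoke the limits \eqref{eqns: quantum_sum_limit_0}, \eqref{eqns: quantum_sum_limit_1}, \eqref{eqns: quantum_sum_limit_cross} obtained in the proof of Theorem \ref{thm: ballistic_limit_for_rotation}, which remain valid verbatim (including the change of variable $a\mapsto -a$) since the extra factors all have modulus one. The pure-term contributions reproduce $f_K(a;\theta)\big[\abs{R}^2(1+a)+\abs{L}^2(1-a)\big]$ as before, and the cross-term contribution becomes $2\Re\!\big(\alpha^{2}R\overline{L}\,\tan\theta\,a/i\big)f_K(a;\theta)=2\tan\theta\,\Im(\alpha^{2}R\overline{L})\,a\,f_K(a;\theta)$; collecting terms yields Konno's weight with $R\overline{L}$ replaced by $\alpha^{2}R\overline{L}$, which is the announced density. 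I do not expect a genuine obstacle here: the only care needed is tracking the unimodular prefactors through complex conjugation and checking that $\alpha^{2}$ is $z$-independent so it survives the sum over coin states, after which the hypotheses of Lemma \ref{lemma: integral_asymptotics} apply unchanged.
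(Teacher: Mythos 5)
Your proposal is correct and follows the paper's own proof essentially verbatim: reduce to the $e^{i\theta\sigma_1}$ walk via \eqref{eqns: general_rotation_coin_relation}, observe the pure terms are unchanged in modulus while the cross term picks up the constant factor $\alpha^{2}$ (your computation $\alpha^{1-z}\overline{\alpha^{-1-z}}=\alpha^{2}$ is exactly the one the paper relies on), and then reuse the limits \eqref{eqns: quantum_sum_limit_0}--\eqref{eqns: quantum_sum_limit_cross}. The only difference is that you spell out the cancellation of the unimodular prefactors in more detail than the paper's terse two-line derivation.
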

\begin{proof}
From \eqref{eqns: general_rotation_coin_relation}, we have 
\begin{align}
    \abs{\psi_n^{\xi; \phi; \beta, \alpha; \theta}(x, z)}^2 &= \abs{\psi_n^{\xi; \theta}(x, z)}^2 \quad \mbox{for every}\quad \xi = \pm 1\\
    \psi_n^{+;\phi; \beta, \alpha; \theta}(x, z) \overline{  \psi_n^{-; \phi; \beta, \alpha; \theta}(x, z)} &=\alpha^2 \psi_n^{+; \theta}(x, z) \overline{  \psi_n^{-;  \theta}(x, z)}. 
\end{align}
Using \eqref{eqns: quantum_sum_limit_0}, \eqref{eqns: quantum_sum_limit_1} and \eqref{eqns: quantum_sum_limit_cross} and substituting the respective result in \eqref{eqns: decomposing_cdf} gives the limit  $\lim_n \P(y_1< \frac{\Xi_n}{n} <y_2)$ to be: 
\begin{align}
   \int_{y_1}^{y_2} f_K(a; \theta)\{1+a[\abs{R}^2 - \abs{L}^2 +2\tan\theta \Im (\alpha^2 R\overline{L})]da\},
\end{align}
which completes the proof. 
\end{proof}
\begin{remark}
    Observe that the limiting distribution is independent of the parameters $\beta$ and $\phi$ of the coin. 
\end{remark}

Next, using the stochastic processes $(S_n, T_n)$ with the specific initial conditions $(S_0, T_0) = (0, 0)$ as in \eqref{eqns: Feynman_Formula_Tn}, we can rewrite the Feynman formula for the general coins \eqref{eqns: Feynman_Forumla_general_coin} as:
\begin{align}\label{eqns: Feynman_Forumla_general_coin_T_n}
    \psi_n^{\phi; \beta, \alpha ;\theta}(x, z) = e^{in\phi}{e^{n\theta}} \E\left[i^{S_n}(\alpha \beta)^{zT_n}\alpha^{z(-1)^{S_n}-z} \psi_0(x-zT_n,z(-1)^{S_n})\mid (S_0, T_0) = (0, 0)\right].
\end{align}
Since $\abs{\alpha} = \abs{\beta} = 1$, we can write $\alpha = e^{iA}$ and $\beta := e^{iB}$ for $A, B\in [0, 2\pi)$. By additionally rescaling $A, B$ and $\phi$, we can obtain: 
\begin{theorem}
Let $f:\R\times \{\pm 1\}\to \C$ and $\psi_0^{\epsilon}$ be the same as those in Theorem \ref{thm: quantum_scaling_limit}. Denoting $\psi_n^{\phi; \beta, \alpha; \theta, \epsilon}(x, z)$ the $n$th step state using the coin $e^{i\phi}Z_\beta e^{i\theta \sigma_1}Z_\alpha$ on the initial condition $\psi_0^\epsilon$, we have the point-wise limit:
\begin{align}
\psi(t, y, z,\eta,  B, A, \lambda) &:= \lim_{\epsilon\to 0^+} \psi_{[t/\epsilon]}^{\epsilon \eta; e^{i\epsilon B}, e^{i\epsilon A}; \epsilon \lambda, \epsilon}([y/\epsilon], z) \\
& \, = e^{it\eta +t\lambda} \E\left[i^{N_t} e^{i(A+B)zT_t}f(y-zT_t, z(-1)^{N_t})\right],
\end{align}
for every $t, \eta, B, A, \lambda>0, y\in \R, z = \pm 1$. The limit $\psi_{\pm} (t, y) := \psi(t, y, \pm 1, \eta,B, A, \lambda)$ satisfies the system of PDEs: 

\begin{equation}
\left \{ \begin{array}{ccl} 
\partial_t \psi_+ & = &  -\p_y \psi_+ + i\lambda \psi_- + i(A+B)\psi_+ +i\eta\psi_+, \\
\partial_t \psi_- & = & \p_y \psi_- + i\lambda \psi_+ - i(A+B)\psi_-+i\eta\psi_-  
\end{array}
\right . 
\end{equation}
with $\psi_\pm(0, y) = f(y, \pm 1)$. 
\end{theorem}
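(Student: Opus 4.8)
The plan is to prove the two assertions in turn — the probabilistic representation of $\psi(t,y,z,\eta,B,A,\lambda)$ as a scaling limit, and then the PDE system it satisfies — following closely the patterns of Theorem~\ref{thm: quantum_scaling_limit} and Proposition~\ref{prop: 4.3}.

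For the representation, I would begin from the general-coin Feynman formula written with the pair $(S_n,T_n)$, namely \eqref{eqns: Feynman_Forumla_general_coin_T_n}, and substitute the rescaled data $\phi=\epsilon\eta$, $\theta=\epsilon\lambda$, $\beta=e^{i\epsilon B}$, $\alpha=e^{i\epsilon A}$ (so $\alpha\beta=e^{i\epsilon(A+B)}$), $n=[t/\epsilon]$, $x=[y/\epsilon]$, and $\psi_0^\epsilon(\cdot,z)=f(\epsilon\,\cdot,z)$. Writing $(S_n^\epsilon,T_n^\epsilon)$ for the processes of Definition~\ref{defn: MA Model} driven by i.i.d.\ Poisson variables of parameter $\epsilon\lambda$, this gives
\begin{align*}
\psi_{[t/\epsilon]}^{\epsilon\eta;\,e^{i\epsilon B},e^{i\epsilon A};\,\epsilon\lambda,\,\epsilon}([y/\epsilon],z)
&= e^{i[t/\epsilon]\epsilon\eta}\,e^{[t/\epsilon]\epsilon\lambda}\\
&\quad\times\E\big[\,i^{S_{[t/\epsilon]}^{\epsilon}}\,e^{i\epsilon(A+B)zT_{[t/\epsilon]}^{\epsilon}}\,e^{i\epsilon A(z(-1)^{S_{[t/\epsilon]}^{\epsilon}}-z)}\,f\big(\epsilon[y/\epsilon]-z\epsilon T_{[t/\epsilon]}^{\epsilon},\,z(-1)^{S_{[t/\epsilon]}^{\epsilon}}\big)\big],
\end{align*}
using that $i^{S}$ and $(-1)^{S}$ depend only on $S\bmod 4$, so that working with $S_n^\epsilon$ unreduced is harmless. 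I would then note that the deterministic prefactor tends to $e^{it\eta+t\lambda}$; that $e^{i\epsilon A(z(-1)^{S^\epsilon}-z)}\to 1$ uniformly since its exponent is $\epsilon$ times the bounded quantity $A(z(-1)^{S^\epsilon}-z)\in\{0,\pm 2A\}$; and that $\epsilon[y/\epsilon]\to y$. For the remaining expectation I would invoke the joint weak convergence $(S_{[t/\epsilon]}^\epsilon,\epsilon T_{[t/\epsilon]}^\epsilon)\Rightarrow(N_t,T_t)$ supplied by Lemma~\ref{lemma: classical_scaling_limit}, pass to a Skorokhod coupling, and apply bounded convergence: the integrand is dominated by $\sup|f|<\infty$ because $f\in C_0$, and it is almost surely continuous in the limiting pair since $f(\cdot,z)$ is continuous. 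This produces $\psi(t,y,z,\eta,B,A,\lambda)=e^{it\eta+t\lambda}\E[i^{N_t}e^{i(A+B)zT_t}f(y-zT_t,z(-1)^{N_t})]$, and evaluating at $t=0$ (where $N_0=T_0=0$) gives $\psi(0,y,z)=f(y,z)$.

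For the PDE system I would mimic the proof of Proposition~\ref{prop: 4.3}. With $\psi(t,y,z):=\psi(t,y,z,\eta,B,A,\lambda)$, condition on $\F_h:=\sigma(N_s,s\le h)$, split $\int_0^{t+h}=\int_0^h+\int_h^{t+h}$, and put $M_u:=N_{u+h}-N_h$, a Poisson process of intensity $\lambda$ independent of $\F_h$. From $(-1)^{N_{t+h}}=(-1)^{N_h}(-1)^{M_t}$, $i^{N_{t+h}}=i^{N_h}i^{M_t}$, $\int_h^{t+h}(-1)^{N_s}\mathrm d s=(-1)^{N_h}\int_0^t(-1)^{M_u}\mathrm d u$, together with $Y_h:=y-z\int_0^h(-1)^{N_s}\mathrm d s$, $Z_h:=z(-1)^{N_h}$ and the tower property — recognising the inner $M$-expectation as $e^{-it\eta-t\lambda}\psi(t,Y_h,Z_h)$ via the representation just established — one obtains the semigroup identity
\begin{align*}
\psi(t+h,y,z)=e^{ih\eta+h\lambda}\,\E_{0,y,z}\big[\,i^{N_h}\,e^{i(A+B)z\int_0^h(-1)^{N_s}\mathrm d s}\,\psi(t,Y_h,Z_h)\,\big].
\end{align*}
Expanding to first order in $h$ — using $\P(N_h=0)=1-\lambda h+O(h^2)$, $\P(N_h=1)=\lambda h+O(h^2)$, $\P(N_h\ge 2)=O(h^2)$; that on $\{N_h=0\}$ one has $Y_h=y-zh$, $Z_h=z$ with phase $e^{i(A+B)zh}=1+i(A+B)zh+O(h^2)$ and $e^{h\lambda}\P(N_h=0)=1+O(h^2)$; and that on $\{N_h=1\}$ the jump time is uniform on $[0,h]$, so $\E[\psi(t,Y_h,Z_h)\mid N_h=1]=\frac{1}{h}\int_0^h\psi(t,y+zh-2zs,-z)\mathrm d s\to\psi(t,y,-z)$ with the accompanying phase equal to $1+O(h)$ — then subtracting $\psi(t,y,z)$, dividing by $h$ and letting $h\to0$, with $\frac{\psi(t,y-zh,z)-\psi(t,y,z)}{h}\to-z\partial_y\psi(t,y,z)$, yields
\begin{align*}
\partial_t\psi(t,y,z)=-z\partial_y\psi(t,y,z)+i(A+B)z\,\psi(t,y,z)+i\eta\,\psi(t,y,z)+i\lambda\,\psi(t,y,-z),
\end{align*}
which is precisely the claimed system upon setting $z=\pm1$.

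The main obstacle is not conceptual but bookkeeping: one must keep careful track that every auxiliary phase factor is of the form $e^{i\epsilon(\cdot)}$ with a bounded exponent in the scaling step (so that it contributes $1$ in the limit) and of the form $e^{ih(\cdot)}=1+O(h)$ in the PDE step (so that only the first-order contribution $i(A+B)z$ survives), and one must justify the interchange of limit and expectation in the scaling step — for which the boundedness of $f$ and the joint weak convergence of Lemma~\ref{lemma: classical_scaling_limit} are exactly what is required.
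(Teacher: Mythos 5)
Your proposal is correct and follows essentially the same route as the paper, which merely gestures at the argument: the paper establishes the semigroup identity \eqref{eqns: general_coin_pde} ``similarly to the proof of Theorem~\ref{thm: quantum_scaling_limit}'' and then says the rest follows the lines of that proof (the PDE step mirrors Proposition~\ref{prop: 4.3}). You fill in exactly the details the paper leaves implicit: the substitution of $\alpha=e^{i\epsilon A}$, $\beta=e^{i\epsilon B}$ into \eqref{eqns: Feynman_Forumla_general_coin_T_n}, the observation that the stray factor $\alpha^{z(-1)^{S_n}-z}=e^{i\epsilon A(z(-1)^{S_n}-z)}$ has a uniformly bounded exponent and hence tends to $1$, the passage $\epsilon T^\epsilon_{[t/\epsilon]}\to T_t$ from Lemma~\ref{lemma: classical_scaling_limit} combined with bounded convergence, and the first-order expansion in $h$ of the phase factor $e^{i(A+B)zT_h}$ on the events $\{N_h=0\}$ and $\{N_h=1\}$, yielding the additional drift term $i(A+B)z\psi$ and the global phase term $i\eta\psi$ alongside the transport and interaction terms already present in Proposition~\ref{prop: 4.3}. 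No gaps.
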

\begin{proof}
Similarly to the proof of Theorem \ref{thm: quantum_scaling_limit}, one can show that 
\begin{align}\label{eqns: general_coin_pde}
    &\psi(t+h, y, z, \eta, B, A , \lambda) \\
    =& e^{h\lambda + ih \eta} \E\left[i^{N_h} e^{i(A+B)zT_h} \psi(t, y-zT_h, z(-1)^{N_h}, \eta, B, A, \lambda)\right], 
\end{align}
and the remainder of the proof follows the lines of the proof of Theorem \ref{thm: quantum_scaling_limit}.  
\end{proof}

\begin{remark}
    Note that \eqref{eqns: general_coin_pde} differs from \eqref{PDE1} only by phases driven by $A, B$ and $\eta$ in the general coin $\eqref{eqns: general_coin_defn}$, while the interaction term remains the same driven by the component $e^{i\theta \sigma_1}$ of the coin. 
\end{remark}

\subsection{Site-dependent coins}
A quantum walk is space-inhomogeneous when the coin operator is site-dependent, which can be written as a controlled operator:
\begin{align}
    C = \sum_{x\in \Z} e_xe_x^* \otimes  C_x, \quad C_x\in U(\{\pm 1\}) \quad \mbox{for all}\quad x\in \Z. 
\end{align}
In the following we consider two types of site-dependent coins: a rotation coin with site-dependent global phase and a rotation coin with site-dependent angles of rotation. 
\subsubsection{Rotation coin with site-dependent global phase} 
The coin under consideration in this section is 
\begin{align}
    C = \sum_{x\in \Z} e_x e_x^* \otimes e^{iV(x)}e^{i\theta \sigma_1}, 
\end{align}
where $V:\Z\to \R$ is some real-valued function and $\theta >0$. Denoting by $\psi_n^{V; \theta}$ the $n$th step state of the quantum walk with this coin, we have: 
\begin{theorem}
For every $x\in \Z, z\in \{\pm 1\}$ and initial state $\psi_0 \in \ell_2(\Z)\otimes \ell_2(\{\pm 1\})$:
\begin{align}
    \psi_n^{V; \theta}(x, z) = e^{n\theta} \E_{0,x,z}\left[ i^{S_n} e^{i[V(X_1)+\cdots +V(X_n)]} \psi_0(X_n, Z_n)\right], 
\end{align}
where the driving random variables are independent with $N_j\sim Poi(\theta)$ for every $j\geq 1$. 
\end{theorem}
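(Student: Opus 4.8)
The plan is to reproduce, with one extra factor, the two‑stage route used for Lemma~\ref{lemma: Poisson-like_coefficients} and Theorem~\ref{thm: Feynman_Formula}: first unfold $W^n\psi_0$ into a Feynman‑path sum, then recognise the coefficients as products of Poisson probabilities. The new ingredient is the position‑dependent global phase, which gets accumulated along the path.

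First I would compute the one‑step action of $W=SC$ on a basis vector. Since $C=\sum_{x'}e_{x'}e_{x'}^*\otimes e^{iV(x')}e^{i\theta\sigma_1}$, the scalar $e^{iV(x)}$ factors out before the shift acts, so that
\begin{align*}
We_xe_z = e^{iV(x)}\,S\,e^{i\theta\sigma_1}e_xe_z = e^{iV(x)}\sum_{k=0}^\infty \frac{(i\theta)^k}{k!}\, e_{x+(-1)^kz}\,e_{(-1)^kz},
\end{align*}
which is exactly the computation in Lemma~\ref{lemma: Poisson-like_coefficients} carrying the additional factor $e^{iV(x)}$. Writing this out for a general state $\psi$ and reading off the coefficient of $e_xe_z$ gives the one‑step identity
\begin{align*}
(W\psi)(x,z) = \sum_{k=0}^\infty \frac{(i\theta)^k}{k!}\, e^{iV(x-z)}\,\psi\bigl(x-z,\,(-1)^k z\bigr),
\end{align*}
the phase being evaluated at the pre‑shift position $x-z$.

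Next I would iterate this identity $n$ times on $\psi_n^{V;\theta}=W^n\psi_0$. Using the conventions $(x_0,z_0):=(x,z)$, $z_m:=(-1)^{k_m}z_{m-1}$, $x_m:=x_0-\sum_{j=0}^{m-1}z_j$ from Lemma~\ref{lemma: Poisson-like_coefficients}, an induction on $n$ yields
\begin{align*}
\psi_n^{V;\theta}(x,z) = \sum_{k_1,\dots,k_n=0}^\infty \frac{(i\theta)^{k_1+\cdots+k_n}}{k_1!\cdots k_n!}\, e^{i[V(x_1)+\cdots+V(x_n)]}\,\psi_0(x_n,z_n).
\end{align*}
The content of the induction is the bookkeeping of the phase: the first application of $W$ contributes $e^{iV(x_1)}$ with $x_1=x-z$, and the remaining $n-1$ applications act as the sub‑walk started at $(x_1,z_1)$, whose own path positions are $x_2,\dots,x_n$; adding these gives the total phase $V(x_1)+\cdots+V(x_n)$. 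The interchange of summations is harmless since $\sum_k\theta^k/k!=e^\theta<\infty$ and $\psi_0\in\ell_2(\Z)\otimes\ell_2(\{\pm1\})$ is bounded pointwise.

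Finally, exactly as in the proof of Theorem~\ref{thm: Feynman_Formula}, I would insert a Poisson density for each index via $(i\theta)^{k_1+\cdots+k_n}/(k_1!\cdots k_n!)=e^{n\theta}\,i^{k_1+\cdots+k_n}\prod_{j=1}^n e^{-\theta}\theta^{k_j}/k_j!$. Since $i^{k_1+\cdots+k_n}$ depends only on $(k_1+\cdots+k_n)\bmod 4$, and $(x_m,z_m)$ coincides with $(X_m,Z_m)$ when $N_j=k_j$ and $(S_0,X_0,Z_0)=(0,x,z)$ as in Definition~\ref{defn: MA Model}, the sum becomes $e^{n\theta}\,\E_{0,x,z}\!\bigl[i^{S_n}e^{i[V(X_1)+\cdots+V(X_n)]}\psi_0(X_n,Z_n)\bigr]$ with the $(N_j)$ i.i.d.\ $Poi(\theta)$, which is the claim. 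The only subtlety is the index‑tracking in the third step — verifying that the accumulated phase is $V(X_1)+\cdots+V(X_n)$ with the indices matching Definition~\ref{defn: MA Model} (and not, say, $V(X_0)+\cdots+V(X_{n-1})$); everything else is a routine repetition of the earlier arguments.
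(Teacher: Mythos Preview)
Your proof is correct and follows essentially the same approach as the paper: expand $W^n\psi_0$ into a Feynman‑path sum as in Lemma~\ref{lemma: Poisson-like_coefficients} (now with the accumulated phase $e^{i[V(x_1)+\cdots+V(x_n)]}$), then factor out $e^{n\theta}$ to rewrite the coefficients as i.i.d.\ Poisson probabilities. Your write‑up is actually more detailed than the paper's, which simply states the path expansion and says ``the Feynman formula easily follows from factoring out $e^{n\theta}$''; in particular your explicit check that the phase lands at the pre‑shift positions $x_1,\dots,x_n$ (hence at $X_1,\dots,X_n$) is a useful addition.
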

\begin{proof}
Similar to the proof of Lemma \ref{lemma: Poisson-like_coefficients}, we can derive
\begin{align}
    \psi_n^{V; \theta}(x, z) = \sum_{k_1, \cdots, k_n=0}^\infty& \frac{(i\theta)^{k_1+\cdots + k_n}}{k_1 ! \cdots k_n!}\\
    & \times e^{i[V(x-z) +V(x-z-(-1)^{k_1}z)-\cdots -V(x-z-(-1)^{k_1}z-\cdots -(-1)^{k_{n-1}}z)]}\\
    \times & \psi_0(x-z-(-1)^{k_1}z-\cdots -(-1)^{k_1+\cdots +k_{n-1}}, z(-1)^{k_1+\cdots + k_n}). \notag 
\end{align}
for every $n\geq 1$. The Feynman formula easily follows from factoring out $e^{n\theta}$. 
\end{proof}

\begin{corollary}
    Let $V:\R \to \R$ be a real-valued continuous function. Let $f:\R\times \{\pm 1\}\to \C$ and $\psi_0^\epsilon$ be the same as those in Theorem \ref{thm: quantum_scaling_limit}. Denoting  $\psi_n^{V, \theta; \epsilon}(x, z)$ the nth step state using the initial state $\psi_0^\epsilon$, we have the point-wise limit:
\begin{align}
    \psi(t, y, z) & := \lim_{\epsilon\to 0} \psi_{[t/\epsilon]}^{\epsilon V(\cdot\epsilon ), \epsilon \lambda; \epsilon}([y/\epsilon], z) \\ 
    & = e^{t\lambda} \E\left[i^{N_t} e^{i\int_0^t V(y-zT_s) \mathrm d s  }f(y - zT_t, z(-1)^{N_t})\right], 
\end{align}
where $N_t$ is a Poisson process with parameter $\lambda$ and $T_t = \int_0^t (-1)^{N_s}\mathrm ds$, for every $t>0, y\in \R, z= \pm1, \lambda>0 $.
\end{corollary}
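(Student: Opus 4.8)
The plan is to follow the lines of the proof of Theorem~\ref{thm: quantum_scaling_limit}, the only genuinely new feature being the oscillatory phase factor $e^{i\sum_m V(X_m)}$. First I would apply the preceding Feynman formula for the site-dependent global-phase coin with the rescaled data: the potential $V$ replaced by $\epsilon V(\epsilon\,\cdot)$, the angle $\theta$ replaced by $\epsilon\lambda$, the time $n$ replaced by $[t/\epsilon]$, the site $x$ replaced by $[y/\epsilon]$, and the initial state $\psi_0$ replaced by $\psi_0^\epsilon$ with $\psi_0^\epsilon(x,z)=f(\epsilon x,z)$. Using the identity $X_m=[y/\epsilon]-zT_m^\epsilon$ that follows from Definition~\ref{defn: MA Model} (where $T_m^\epsilon:=\sum_{j=0}^{m-1}(-1)^{S_j^\epsilon}$ and the $S_j^\epsilon$ are driven by i.i.d.\ $\mathrm{Poi}(\epsilon\lambda)$ variables), this yields the prelimit identity
\begin{align*}
\psi_{[t/\epsilon]}^{\epsilon V(\epsilon\cdot),\,\epsilon\lambda;\,\epsilon}\big([y/\epsilon],z\big)
= e^{[t/\epsilon]\epsilon\lambda}\,\E\Big[\,i^{S_{[t/\epsilon]}^\epsilon}\exp\Big(i\epsilon\sum_{m=1}^{[t/\epsilon]}V\big(\epsilon[y/\epsilon]-z\epsilon T_m^\epsilon\big)\Big)\,f\big(\epsilon[y/\epsilon]-z\epsilon T^\epsilon_{[t/\epsilon]},\,z(-1)^{S^\epsilon_{[t/\epsilon]}}\big)\Big].
\end{align*}

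Next I would recognize the sum in the exponent as a Riemann sum: introducing the cadlag step interpolation $Y^\epsilon_s:=\epsilon[y/\epsilon]-z\epsilon T^\epsilon_{[s/\epsilon]}$, one has $\epsilon\sum_{m=1}^{[t/\epsilon]}V(\epsilon[y/\epsilon]-z\epsilon T_m^\epsilon)=\int_0^{\epsilon[t/\epsilon]}V(Y^\epsilon_s)\,\mathrm ds+o(1)$, where the $o(1)$ error is controlled by the modulus of continuity of $V$ on the compact interval $[y-t,y+t]$ (each consecutive increment of $\epsilon T^\epsilon$ is $\pm\epsilon$, so shifting the index by one costs at most $t\,\omega_V(\epsilon)$). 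By Lemma~\ref{lemma: classical_scaling_limit} (applied with the harmless extra factor $z$ in the $X$-component, so that the limit of $\epsilon X^\epsilon_{[\cdot/\epsilon]}$ is $y-zT_\cdot$), the triple $\big(S^\epsilon_{[\cdot/\epsilon]},\,Y^\epsilon_\cdot,\,z(-1)^{S^\epsilon_{[\cdot/\epsilon]}}\big)$ converges in law in $D(0,\infty)$ to $(N_\cdot,\,Y_\cdot,\,Z_\cdot)$ with $Y_s=y-zT_s$ and $Z_s=z(-1)^{N_s}$. Invoking the Skorokhod representation theorem I may assume this convergence is almost sure; then, since the limiting Poisson process $N$ almost surely has no jump at the fixed time $t$ and the path $s\mapsto Y_s$ is continuous, I obtain the almost sure convergences $i^{S^\epsilon_{[t/\epsilon]}}\to i^{N_t}$, $z(-1)^{S^\epsilon_{[t/\epsilon]}}\to z(-1)^{N_t}$, $Y^\epsilon_{[t/\epsilon]}\to Y_t$, and — using continuity of $V$ together with the deterministic bound $|\epsilon T^\epsilon_{[s/\epsilon]}|\le s\le t$, which keeps every argument inside the fixed compact $[y-t-1,y+t+1]$ for small $\epsilon$ — also $\int_0^{\epsilon[t/\epsilon]}V(Y^\epsilon_s)\,\mathrm ds\to\int_0^t V(Y_s)\,\mathrm ds$. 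Hence the integrand converges almost surely to $i^{N_t}e^{i\int_0^t V(Y_s)\mathrm ds}f(Y_t,Z_t)$.

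Finally, since $|i^{S^\epsilon_{[t/\epsilon]}}|=1$, $|\exp(i\epsilon\sum_m V(\cdot))|=1$ and $|f|\le\|f\|_\infty<\infty$, the integrands are uniformly bounded, so bounded convergence passes the limit through the expectation; combined with $e^{[t/\epsilon]\epsilon\lambda}\to e^{t\lambda}$ this gives the claimed pointwise limit $\psi(t,y,z)=e^{t\lambda}\E[i^{N_t}e^{i\int_0^t V(y-zT_s)\mathrm ds}f(y-zT_t,z(-1)^{N_t})]$, since $Y_s=y-zT_s$ and $Z_t=z(-1)^{N_t}$. I expect the only real obstacle to be the treatment of the phase term: one must verify that the functional $\omega\mapsto\int_0^t V(\omega(s))\,\mathrm ds$ is continuous at continuous paths in the Skorokhod topology (equivalently, that $J_1$-convergence forces convergence in Lebesgue measure on $[0,t]$) and that the discrete exponential sum is asymptotically this integral; the deterministic bound on the telegraph displacement is exactly what makes mere continuity of $V$ (rather than global boundedness) sufficient here. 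Everything else is routine once Lemma~\ref{lemma: classical_scaling_limit} is in hand.
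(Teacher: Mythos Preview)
Your proposal is correct and follows essentially the same route as the paper, which simply says ``It is similar to the proof of Theorem~\ref{thm: quantum_scaling_limit}.'' You have in fact supplied considerably more detail than the paper does anywhere---in particular, your treatment of the Riemann-sum phase $\epsilon\sum_m V(\epsilon X_m)$ via the modulus of continuity of $V$ on the compact $[y-t,y+t]$, and your explicit appeal to Skorokhod representation plus bounded convergence, are exactly the ingredients needed to make the one-line proof rigorous.
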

\begin{proof}
    It is similar to the proof of Theorem \ref{thm: quantum_scaling_limit}. 
\end{proof}
\begin{remark}
    We could derive a PDE similar to \eqref{PDE1} but with additional space-dependent phase terms. 
\end{remark}
\subsubsection{Rotation coin with site-dependent angles} 
In the following we consider the coin
\begin{align}
    C = \sum_{x\in \Z} e_x e_x^* \otimes e^{i\Theta(x)\sigma_1}, 
\end{align}
where $\Theta:\Z\to \R$ is some real-valued function. We denote by $\psi_n^{\Theta(\cdot)}$ the $n$th step state of the quantum walk with this coin. Unlike all other coins the Feynman formula for this coin is not driven by independent Poisson random variables:
\begin{theorem}\label{thm: site_dependent_angle_coin}
For every $x\in \Z, z = \pm 1$ and initial state $\psi_0\in \ell_2(\Z)\otimes \ell_2(\{\pm 1\})$, we have
\begin{align}
    \psi_n^{\Theta(\cdot)}(x, z) = \E_{0,x,z}\left[i^{S_n} e^{\Theta(X_1)+\cdots + \Theta(X_n)} \psi_0(X_n, Z_n)\right], 
\end{align}where  the driving random variables $(N_j)$ are defined by 
\begin{align}
    N_j\mid \F_{j-1}\sim Poi(\Theta(X_j)), \quad \F_{j}:= \sigma(N_1, \cdots, N_j)\quad \mbox{for every}\quad j\geq 1. 
\end{align} 
\end{theorem}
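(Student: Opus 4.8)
The plan is to follow the route of Lemma~\ref{lemma: Poisson-like_coefficients} and Theorem~\ref{thm: Feynman_Formula}, the only new feature being that the coin factor collected at each step is evaluated at the current lattice position. First I would compute the action of $W=SC$ on a basis vector, $W e_x e_z = S e^{i\Theta(x)\sigma_1} e_x e_z = \sum_{k=0}^\infty \frac{(i\Theta(x))^k}{k!} e_{x+(-1)^k z}\, e_{(-1)^k z}$, and read off the one-step recursion
\begin{align*}
(W\psi)(x,z) = \sum_{k=0}^\infty \frac{(i\Theta(x-z))^k}{k!}\,\psi\big(x-z,\,(-1)^k z\big),
\end{align*}
noting that $\Theta$ is evaluated at the pre-image position $x-z$. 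Iterating this identity $n$ times with exactly the bookkeeping of Lemma~\ref{lemma: Poisson-like_coefficients} gives the Feynman-path expansion
\begin{align*}
\psi_n^{\Theta(\cdot)}(x,z) = \sum_{k_1,\dots,k_n=0}^\infty \Big(\prod_{m=1}^n \frac{(i\Theta(x_m))^{k_m}}{k_m!}\Big)\,\psi_0(x_n,z_n),
\end{align*}
where $(x_m,z_m)$ are the sequences of Lemma~\ref{lemma: Poisson-like_coefficients}; the point to keep track of is that the $m$-th coin factor carries $\Theta(x_m)$ with $x_m = x_0 - \sum_{j=0}^{m-1} z_j$.

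The crucial structural observation is that $x_m$, and hence $\Theta(x_m)$, depends only on $k_1,\dots,k_{m-1}$. I would therefore insert the path-dependent normalizers $\prod_{m=1}^n e^{\Theta(x_m)}e^{-\Theta(x_m)}$ and use $i^{k_1+\dots+k_n}=i^{(k_1+\dots+k_n)\bmod 4}$ to rewrite the sum as
\begin{align*}
\psi_n^{\Theta(\cdot)}(x,z) = \sum_{k_1,\dots,k_n=0}^\infty \Big(\prod_{m=1}^n e^{\Theta(x_m)}\Big)\, i^{\,k_1+\dots+k_n}\Big(\prod_{m=1}^n \frac{e^{-\Theta(x_m)}\Theta(x_m)^{k_m}}{k_m!}\Big)\psi_0(x_n,z_n).
\end{align*}
Since $x_m = X_m$ along the path $(N_1,\dots,N_{m-1})=(k_1,\dots,k_{m-1})$, the last product is precisely the joint mass function $\P(N_1=k_1,\dots,N_n=k_n)$ of the sequence specified in the statement, $N_j\mid\F_{j-1}\sim\mathrm{Poi}(\Theta(X_j))$; and along that path $k_1+\dots+k_n\equiv S_n$, $x_n=X_n$, $z_n=Z_n$, and $\sum_{m=1}^n\Theta(x_m)=\Theta(X_1)+\dots+\Theta(X_n)$. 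Recognizing the sum over $(k_1,\dots,k_n)$ as $\E_{0,x,z}$ then yields the announced formula.

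The step I expect to require the most care --- rather than being a genuine obstacle --- is the consistency and well-posedness of the non-homogeneous specification $N_j\mid\F_{j-1}\sim\mathrm{Poi}(\Theta(X_j))$: one must check that $\Theta(X_j)$ is $\F_{j-1}$-measurable, which holds because $X_j = X_0 - \sum_{i=0}^{j-1} Z_i$ and each $Z_i$ is $\F_i$-measurable (for $j=1$, $X_1=x-z$ is deterministic given the initial condition), so that the conditional laws compose into a bona fide joint law whose mass function factorizes as displayed above. In parallel one should justify the rearrangement of the multi-series used when iterating the one-step recursion and when inserting the normalizers; for each fixed $n$ this is routine since the series converges absolutely, being dominated by $\prod_{m=1}^n e^{\lvert\Theta(x_m)\rvert}$ times $\sup_{x,z}\lvert\psi_0(x,z)\rvert<\infty$. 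Finally, implicit in the statement is that $\Theta$ takes values for which $\mathrm{Poi}(\Theta(\cdot))$ is meaningful; if $\Theta$ is permitted to be negative the identity is to be read formally, the algebraic content of the path expansion being unaffected.
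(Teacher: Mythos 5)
Your proof is correct and complete. The paper proves the formula by induction on $n$: it establishes the one-step identity \eqref{eqns: general_coin_base_case} and then, conditioning on $\F_{n-1}$ and using the tower property, peels off the last Poisson factor to reduce $\E_{0,x,z}[i^{S_n}e^{\sum_m\Theta(X_m)}\psi_0(X_n,Z_n)]$ to $(W^{n-1}(W\psi_0))(x,z)$. You instead unroll the one-step recursion $n$ times into the explicit Feynman-path multi-series $\sum_{k_1,\dots,k_n}\prod_m\frac{(i\Theta(x_m))^{k_m}}{k_m!}\psi_0(x_n,z_n)$, then insert the normalizers $\prod_m e^{\Theta(x_m)}e^{-\Theta(x_m)}$ and recognize the remaining product as the joint mass function of the dependent Poisson sequence. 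The two routes are logically equivalent --- unfolding the paper's induction reproduces your expansion --- but they emphasize different things. Your presentation mirrors Lemma~\ref{lemma: Poisson-like_coefficients} and makes the path-weight factorization visible up front; in particular it surfaces the essential structural fact, which the paper's tower-property step leaves implicit, that $x_m$ (hence $\Theta(x_m)$) depends only on $k_1,\dots,k_{m-1}$, so that $\prod_m e^{-\Theta(x_m)}\Theta(x_m)^{k_m}/k_m!$ really is the joint law of a well-defined adapted sequence with $N_j\mid\F_{j-1}\sim\mathrm{Poi}(\Theta(X_j))$. The cost is heavier multi-index bookkeeping; the paper's induction is shorter but requires having guessed the target formula in advance. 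You also rightly flag, which the paper does not, that the probabilistic reading requires $\Theta>0$, while the underlying algebraic identity is insensitive to the sign.
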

\begin{proof}
We provide a proof by induction: first, it is straight forward to see that 
\begin{align}\label{eqns: general_coin_base_case}
    (W \psi)(x, z) &= \sum_{k=0}^\infty \frac{i^k}{k!} (\Theta(x-z))^k \psi(x-z, (-1)^kz) \\
    &= e^{\Theta(x-z)}\E\left[i^{N} \psi(x-z, (-1)^Nz)\mid N\sim Poi(\Theta(x-z))\right], \notag 
\end{align}
where $W$ is the walk operator and $\psi\in \ell^2(\Z)\otimes \ell^2(\{\pm 1\})$ is any state. Then for every $n\geq 2$, we have:
\begin{align}
& \E\left[i^{S_n}e^{\Theta(X_1)+\cdots + \Theta(X_n)}\psi_0(X_n, Z_n)\mid \F_{n-1}\right]\\ \notag 
 =&i^{S_{n-1}}e^{\Theta(X_1)+\cdots + \Theta(X_n)} \E\left[ i^{N_n} \psi_0(X_n, Z_{n-1}(-1)^{N_n})\mid N_n\sim Poi(\Theta(X_n))\right] \\ \notag 
 =& i^{S_{n-1}}e^{\Theta(X_1)+\cdots + \Theta(X_n)} e^{-\Theta(X_n)} (W\psi_0)(X_n+Z_{n-1}, Z_{n-1})\\
 =& i^{S_{n-1}}e^{\Theta(X_1)+\cdots +\Theta(X_{n-1})} (W\psi_0)(X_{n-1}, Z_{n-1}), \notag 
\end{align}
where the second equality follows from \eqref{eqns: general_coin_base_case}. Then we have from the tower property: 
\begin{align*}
   & \E_{0, x, z}\left[i^{S_n}e^{\Theta(X_1)+\cdots + \Theta(X_n)}\psi_0(X_n, Z_n)\right] \\
   &= \E_{0, x, z}\left[\E\left[i^{S_n}e^{\Theta(X_1)+\cdots +\Theta(X_n)}\psi_0(X_n, Z_n)\mid \F_{n-1}\right]\right] \\ \notag 
    &= \E_{0, x, z}(i^{S_{n-1}}e^{\Theta(X_1)+\cdots +\Theta(X_{n-1})} (W\psi_0)(X_{n-1}, Z_{n-1}))\\
    &= (W^{n-1} W\psi_0)(x, z) = (W^n\psi_0)(x, z) \notag, 
\end{align*}
where the second last equality follows from the induction hypothesis: for every $k\leq n$, we have
\begin{align}
    (W^k\psi)(x, z) = \E_{0, x, z}(i^{S_k}e^{\Theta(X_1)+\cdots + \Theta(X_k)}\psi(X_k, Z_k))
\end{align}
for every $x\in \Z$  and $z = \pm 1$ and every state $\psi\in \ell^2(\Z)\otimes \ell^2(\{\pm 1\})$.

\end{proof}

\subsection{Time-dependent coins}
A quantum walk is time-inhomogeneous when the coin operator is time-dependent; its dynamics is given by 
\begin{align}
    \psi_{n} = W_{n} \psi_{n-1}\quad \mbox{for}\quad n\geq 1; \quad \psi_0 \in \ell_2(\Z)\ox \ell_2(\pm 1), 
\end{align}
in which the walk operator is defined by $W_n:= SC_n$ where $(C_n)_{n=1}^\infty$ is a sequence of coin operators. 

In the following we consider the cases both with and without an additional site dependence. 
\subsubsection{Time-dependent, site-homogeneous coins}
The most general time-dependent, site-homogeneous  is given by the sequence
\begin{align}
    C_n = I\otimes e^{i\phi_n}Z_{\beta_n} e^{i\theta_n \sigma_1}Z_{\alpha_n},
\end{align} 
where $(\alpha_n, \beta_n)\in \C^2$ with $\abs{\alpha_n} = \abs{\beta_n}=1$ and $\theta_n, \phi_n\in (0, 2\pi)$ is a real sequence. Denote the $n$th step state $\psi_n^{(\theta_j); (\beta_j), (\alpha_j), (\phi_j)}$ of the quantum walk with these coins. Its Feynman formula is given by: 
\begin{theorem}
For every $x\in \Z, z\in \{\pm 1\}$ and initial state $\psi_0\in \ell_2(\Z)\otimes \ell_2(\{\pm 1\})$, 
\begin{align}
    \psi_n^{(\theta_j); (\beta_j), (\alpha_j), (\phi_j)}(x, z) = e^{i\sum_{j=1}^n \phi_j +\sum_{j=1}^n \theta_j} \E_{0, x, z}\Big[i^{S_n} \prod_{j=1}^n \alpha_j^{Z_j} \prod_{j=1}^n \beta_j^{Z_{j-1}}\psi_0(X_n, Z_n)\Big] , 
\end{align}
where the driving process is given by independent $N_j\sim Poi(\theta_j)$  for every $j\geq 1$. 
\end{theorem}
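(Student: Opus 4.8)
The plan is to replay the argument behind Lemma~\ref{lemma: Poisson-like_coefficients} and the general-coin Feynman formula \eqref{eqns: Feynman_Forumla_general_coin}, now carrying a time index on every coin parameter. First I would work out one step: with $C_j = I\otimes e^{i\phi_j}Z_{\beta_j}e^{i\theta_j\sigma_1}Z_{\alpha_j}$ and an arbitrary state $\varphi$, using $Z_\omega e_z=\omega^z e_z$ for $z=\pm1$, the expansion $e^{i\theta_j\sigma_1}e_z=\sum_{k\ge0}\frac{(i\theta_j)^k}{k!}e_{(-1)^kz}$, and $Se_xe_w=e_{x+w}e_w$, one obtains the local recursion
\begin{align*}
(W_j\varphi)(x,z)=e^{i\phi_j}\sum_{k=0}^\infty\frac{(i\theta_j)^k}{k!}\,\alpha_j^{(-1)^kz}\,\beta_j^{\,z}\,\varphi\bigl(x-z,(-1)^kz\bigr),
\end{align*}
the time-dependent analogue of \eqref{eqns: one-step-proof}; relative to the bare rotation coin it carries the extra phases $e^{i\phi_j}$, $\alpha_j^{(-1)^kz}$ and $\beta_j^{\,z}$.

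Next I would iterate this identity $n$ times by induction on $n$, peeling one walk operator at a time off $\psi_n=W_n\psi_{n-1}$. Each peel introduces a fresh summation index, a factor $e^{i\phi_j}(i\theta_j)^{k}/k!$, the two phase factors of $\alpha_j$ and $\beta_j$, and advances the running site by minus the current coin state; after $n$ peels one lands on $\psi_0$ evaluated at the endpoint $(X_n,Z_n)$ of the same path as in Lemma~\ref{lemma: Poisson-like_coefficients}. Collecting everything gives a Feynman-path sum
\begin{align*}
\psi_n(x,z)=e^{i\sum_{j=1}^n\phi_j}\sum_{k_1,\dots,k_n=0}^\infty\Big(\prod_{j=1}^n\frac{(i\theta_j)^{k_j}}{k_j!}\Big)\Big(\prod_{j=1}^n\alpha_j^{Z_j}\Big)\Big(\prod_{j=1}^n\beta_j^{Z_{j-1}}\Big)\psi_0(X_n,Z_n),
\end{align*}
where $(S_m,Z_m,X_m)$ are read off from the multi-index $(k_1,\dots,k_n)$ exactly as in Definition~\ref{defn: MA Model}. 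Then, as in the proof of Theorem~\ref{thm: Feynman_Formula}, I would pull $i^{k_1+\dots+k_n}$ out of $\prod_j(i\theta_j)^{k_j}$ and write $\prod_j\theta_j^{k_j}/k_j!=e^{\sum_j\theta_j}\prod_j\P(N_j=k_j)$ for independent $N_j\sim Poi(\theta_j)$; since $i^{k_1+\dots+k_n}=i^{S_n}$, the $\theta$-free remainder is exactly $e^{i\sum_j\phi_j+\sum_j\theta_j}\E_{0,x,z}\bigl[i^{S_n}\prod_j\alpha_j^{Z_j}\prod_j\beta_j^{Z_{j-1}}\psi_0(X_n,Z_n)\bigr]$, the asserted identity.

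The delicate point, which I expect to be the main obstacle, is organizational rather than analytic: because the $C_j$ no longer commute, one must keep track of which summation index is weighted by which $\theta_j$ as the operators are peeled off in order, and correspondingly which of the $Z_m$ is seen by each $\alpha_j$ and $\beta_j$; the cleanest route is to fix once and for all the convention identifying the index attached to $\theta_j$ with the Poisson clock $N_j$ and then to check the path factors against Definition~\ref{defn: MA Model}. As a final consistency check I would verify that setting $\phi_j\equiv\phi$, $\theta_j\equiv\theta$, $\alpha_j\equiv\alpha$, $\beta_j\equiv\beta$ collapses the formula, via $\sum_{j=0}^{n-1}Z_j=X_0-X_n$ and $\sum_{j=1}^nZ_j=(X_0-X_n)-Z_0+Z_n$, back to the general-coin Feynman formula \eqref{eqns: Feynman_Forumla_general_coin}; beyond this bookkeeping the analytic content is the same Poissonization already used for Theorem~\ref{thm: Feynman_Formula}.
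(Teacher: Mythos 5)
Your one-step recursion $(W_j\varphi)(x,z)=e^{i\phi_j}\sum_{k\geq 0}\frac{(i\theta_j)^k}{k!}\alpha_j^{(-1)^kz}\beta_j^z\,\varphi(x-z,(-1)^kz)$ and the Poissonization step are both correct, and the overall plan (iterate, then factor the Poisson weights) is the same as the paper's. The gap is precisely in the pairing you wrote into the displayed path sum. Peeling $\psi_n=W_nW_{n-1}\cdots W_1\psi_0$ from the outside means the \emph{first} peel is of $W_n$: by Definition~\ref{defn: MA Model}, the summation index it introduces is the one that becomes $N_1$ (it sends $(X_0,Z_0)$ to $(X_1,Z_1)$), but it is weighted by $\theta_n$, not $\theta_1$, and carries the phases $\alpha_n^{Z_1}\beta_n^{Z_0}$, not $\alpha_1,\beta_1$. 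Carrying the iteration through, the honest output is
\[
\psi_n(x,z)=e^{i\sum_{j}\phi_j}\sum_{k_1,\dots,k_n\ge 0}\ \prod_{j=1}^n\frac{(i\theta_{n+1-j})^{k_j}}{k_j!}\ \prod_{j=1}^n\alpha_{n+1-j}^{Z_j}\ \prod_{j=1}^n\beta_{n+1-j}^{Z_{j-1}}\ \psi_0(X_n,Z_n),
\]
so the Poissonized identity requires $N_j\sim Poi(\theta_{n+1-j})$ and the reversed coin indices inside the products --- not the statement you (and the theorem as printed) wrote down. Your closing consistency check only specializes to $j$-constant coins, which is exactly the case insensitive to this reversal, so it cannot catch the discrepancy.

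That the two expressions genuinely differ can be seen already at $n=2$ with $\psi_0=e_0e_1$: applying $W_1$ and then $W_2$ directly gives
\[
\psi_2(0,-1)=i\,e^{i(\phi_1+\phi_2)}\,\alpha_1\alpha_2\,\beta_1\beta_2^{-1}\,\cos\theta_1\sin\theta_2,
\]
whereas the stated formula (the only contributing Markov path has $Z_0=-1$, $Z_1=Z_2=1$, so $N_1$ odd, $N_2$ even) evaluates to $i\,e^{i(\phi_1+\phi_2)}\,\alpha_1\alpha_2\,\beta_1^{-1}\beta_2\,\sin\theta_1\cos\theta_2$, with both the $\theta$- and $\beta$-indices transposed. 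This is exactly where the step ``fix the convention identifying the $\theta_j$-index with $N_j$ and check the path factors against Definition~\ref{defn: MA Model}'' fails: with that convention, the process $(Z_m,X_m)$ built by Definition~\ref{defn: MA Model} is no longer the sequence of sites actually visited along the backward Feynman path, because the increments are accumulated in the wrong order. The remedy is to reverse the coin index throughout the products and Poisson parameters (equivalently, drive Definition~\ref{defn: MA Model} with the increment sequence read in reverse time); in particular the theorem statement as printed needs the same correction.
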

\begin{proof}
    The proof follows from a slight modification to \eqref{eqns: general_coin_expansion}. 
\end{proof}
A scaling limit can be derived when we restrict to the case that only $(\theta_j)$ is time-varying:

\begin{corollary}
   Let $\lambda:[0, \infty)\to (0, \infty)$ be a positive, continuous function. Let $f:\R\times \{\pm 1\}\to \C$ and $\psi_0^\epsilon$ be the same as those in Theorem \ref{thm: quantum_scaling_limit}. Denoting  $\psi_n^{(\theta_j); \epsilon}(x, z)$ the nth step state using the time-dependent coins $C_n:= I\otimes e^{i\theta_n \sigma_1}$ and the initial state $\psi_0^\epsilon$, we have the point-wise limit
\begin{align}
    \psi(t, y, z, \lambda):= \lim_{\epsilon\to 0^+} \psi_{[t/\epsilon]}^{(\epsilon \lambda(\epsilon j))_{j\in \Z}}([y/z], z) = e^{\int_0^t\lambda(s)\mathrm ds} \E\left[i^{N_t} f(y-zT_t, z(-1)^{N_t})\right],
\end{align}
where $(N_t)_{t\geq 0}$ is a Poisson process with intensity level $\lambda (t)$ and $T_t:= \int_0^t (-1)^{N_s}\mathrm ds$. 
\end{corollary}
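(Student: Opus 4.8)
The argument follows the template of Theorem~\ref{thm: quantum_scaling_limit}; the only new ingredient is a mild, time-inhomogeneous strengthening of Lemma~\ref{lemma: classical_scaling_limit}. First I would apply the Feynman formula for time-dependent site-homogeneous coins stated just above, specialized to $\phi_j\equiv 0$, $\alpha_j\equiv\beta_j\equiv 1$ and $\theta_j=\epsilon\lambda(\epsilon j)$, with $n=[t/\epsilon]$, $x=[y/\epsilon]$, and $\psi_0^\epsilon(x,z)=f(\epsilon x,z)$. Writing $S^\epsilon_n:=\sum_{j=1}^n N^\epsilon_j$ with $N^\epsilon_j\sim\mathrm{Poi}(\epsilon\lambda(\epsilon j))$ independent (since $i^{S}$ and $(-1)^{S}$ depend only on $S\bmod 4$, we may and do work with the unreduced partial sums), and $T^\epsilon_n:=\sum_{j=0}^{n-1}(-1)^{S^\epsilon_j}$, this gives
$$\psi_{[t/\epsilon]}^{(\epsilon\lambda(\epsilon j));\epsilon}([y/\epsilon],z)=e^{\sum_{j=1}^{[t/\epsilon]}\epsilon\lambda(\epsilon j)}\,\E\!\left[i^{S^\epsilon_{[t/\epsilon]}}\,f\!\left(\epsilon[y/\epsilon]-z\epsilon T^\epsilon_{[t/\epsilon]},\,z(-1)^{S^\epsilon_{[t/\epsilon]}}\right)\right].$$
Since $\lambda$ is continuous, the deterministic prefactor is a Riemann sum and $\sum_{j=1}^{[t/\epsilon]}\epsilon\lambda(\epsilon j)\to\int_0^t\lambda(s)\,\mathrm ds$, while $\epsilon[y/\epsilon]\to y$.

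Next I would record the inhomogeneous version of Lemma~\ref{lemma: classical_scaling_limit}: $(S^\epsilon_{[t/\epsilon]})_{t\geq 0}$ converges weakly in $D(0,\infty)$ to the inhomogeneous Poisson process $(N_t)_{t\geq 0}$ of intensity $\lambda(\cdot)$. For the finite-dimensional distributions one uses that, over $(s,t]$, the increment $\sum_{[s/\epsilon]<j\leq[t/\epsilon]}N^\epsilon_j$ is Poisson with parameter $\sum_{[s/\epsilon]<j\leq[t/\epsilon]}\epsilon\lambda(\epsilon j)\to\int_s^t\lambda(u)\,\mathrm du$, together with independence of the increments over disjoint blocks; tightness follows exactly as in Lemma~\ref{lemma: classical_scaling_limit} through the jump-time bookkeeping — the times of increase $n^\epsilon_j$ of $S^\epsilon_{[\cdot/\epsilon]}$ satisfy $\epsilon n^\epsilon_j\to T_j$, the $j$-th jump time of $(N_t)$, and increments by more than $1$ are of higher order in $\epsilon$. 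As in the homogeneous case this yields $(-1)^{S^\epsilon_{[t/\epsilon]}}z\Rightarrow(-1)^{N_t}z=:Z_t$ and $\epsilon T^\epsilon_{[t/\epsilon]}\Rightarrow\int_0^t(-1)^{N_s}\,\mathrm ds=:T_t$ in $D(0,\infty)$.

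Finally I would pass to the limit inside the expectation. Because $|i^{S^\epsilon_{[t/\epsilon]}}|=1$ and $f$ is bounded (being $C_0$ in its first argument), the integrand is uniformly bounded; the functional $\omega\mapsto i^{N_t(\omega)}f(y-zT_t(\omega),z(-1)^{N_t(\omega)})$ is a.s.\ continuous for the Skorokhod topology under the limit law — evaluation at the fixed time $t$ is continuous at paths with no jump at $t$, which holds $(N_\cdot)$-almost surely, $\omega\mapsto T_t(\omega)$ is continuous, and $f$ is continuous in its first variable — so the continuous mapping theorem together with bounded convergence gives
$$\psi(t,y,z)=e^{\int_0^t\lambda(s)\,\mathrm ds}\,\E\!\left[i^{N_t}\,f\!\left(y-zT_t,\,z(-1)^{N_t}\right)\right],$$
which is the claimed identity. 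The only point requiring care is the tightness in the non-stationary setting; this is routine, since the jump-time argument of Lemma~\ref{lemma: classical_scaling_limit} never used that the $N^\epsilon_j$ are identically distributed (alternatively one may quote the standard functional limit theorem for row-wise independent null arrays of integer-valued increments).
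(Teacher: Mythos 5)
Your proof is correct and takes essentially the same approach as the paper, whose own proof of this corollary is a one-line pointer to the proof of Theorem~\ref{thm: quantum_scaling_limit}. The steps you spell out — specializing the time-dependent Feynman formula to $\phi_j\equiv 0$, $\alpha_j\equiv\beta_j\equiv 1$, $\theta_j=\epsilon\lambda(\epsilon j)$; the Riemann-sum prefactor; and the time-inhomogeneous strengthening of Lemma~\ref{lemma: classical_scaling_limit} yielding an inhomogeneous Poisson process — are exactly what that reference implicitly requires.
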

\begin{proof}
    Similar to the proof of Theorem \ref{thm: quantum_scaling_limit}. 
\end{proof}

\subsubsection{Time-dependent, site-inhomogeneous coins}
We consider the time-dependent coins
\begin{align}
    C_n:= \sum_{x\in \Z} e_xe_x^* \otimes e^{i\Theta_n(x) \sigma_1} \quad \mbox{for all}\quad n\geq 1,
\end{align}
where $\Theta_n:\Z\to \R$ is a real-valued function for $n \ge 1$. Denoting the $n$th step state $\psi_n^{(\Theta_j(\cdot))}$ of the quantum walk with this coin, we have
\begin{theorem}
For every $x\in \Z, z = \pm 1$ and initial state $\psi_0\in \ell_2(\Z)\otimes \ell_2(\{\pm 1\})$, we have
\begin{align}
    \psi_n^{(\Theta_j(\cdot))}(x, z) = \E_{0, x, z}[i^{S_n} e^{\Theta_1(X_1)+\cdots + \Theta_n(X_n)} \psi_0(X_n, Z_n)], 
\end{align}
where  the driving random variables $(N_j)$ are defined by 
\begin{align}
    N_j\mid \F_{j-1}\sim Poi(\Theta_j(X_j)), \quad \F_{j}:= \sigma(N_1, \cdots, N_j)\quad \mbox{for every}\quad j\geq 1. 
\end{align} 
\end{theorem}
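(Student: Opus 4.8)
The plan is to run the same induction on $n$ that underlies the proof of Theorem \ref{thm: site_dependent_angle_coin}, the only change being that the coin angle seen at each step now carries an additional time label. The starting point is the one-step action of the walk operator $W_n=SC_n$: exactly as in \eqref{eqns: general_coin_base_case}, for any state $\psi$ one has
\[
(W_n\psi)(x,z)=\sum_{k=0}^\infty\frac{i^k}{k!}\,\big(\Theta_n(x-z)\big)^k\,\psi\big(x-z,(-1)^kz\big)=e^{\Theta_n(x-z)}\,\E\left[\,i^{N}\psi\big(x-z,(-1)^Nz\big)\mid N\sim\mathrm{Poi}(\Theta_n(x-z))\,\right].
\]
Specialising to $n=1$ and using that $(S_0,X_0,Z_0)=(0,x,z)$ forces $X_1=x-z$, $S_1=N_1$ and $Z_1=(-1)^{N_1}z$ yields the base case of the asserted identity.

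For the inductive step I would peel one walk step off the product $\psi_n=W_n\cdots W_1\psi_0$ and condition the right-hand side of the claimed formula on $\F_{n-1}$. The quantities $S_{n-1}$, $Z_{n-1}$ and $X_n=X_{n-1}-Z_{n-1}$ are all $\F_{n-1}$-measurable, while the remaining increment is conditionally Poisson; by the displayed one-step formula the inner conditional expectation therefore collapses to $e^{-\Theta_\bullet(X_n)}$ times a single application of the relevant walk operator evaluated at $(X_{n-1},Z_{n-1})$, where $\Theta_\bullet$ is the coin angle governing the step being removed. This factor cancels the matching exponential inside $e^{\Theta_1(X_1)+\cdots+\Theta_n(X_n)}$, so that taking the outer expectation leaves an expression of exactly the same shape but for an $(n-1)$-step walk whose initial datum is $\psi_0$ advanced by one walk step. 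Checking that the truncated position process and the truncated conditional Poisson parameters of that $(n-1)$-step walk coincide with $(S_j,X_j,Z_j)_{j\le n-1}$ makes the induction hypothesis applicable, and unwinding the telescoping recursion $X_m=X_{m-1}-Z_{m-1}$ rebuilds $W_n\cdots W_1\psi_0=\psi_n$.

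The step I expect to demand the most care is precisely this bookkeeping: because the coin is simultaneously time- and site-dependent, one has to track which time index $\Theta_j$ attaches to which step of the Markov additive process, so that each conditional Poisson parameter is the angle of the coin used at the walk step handled at that stage of the recursion, evaluated at the then-current position; this in turn dictates the order in which the steps must be peeled off so that the parameters line up with the product $e^{\Theta_1(X_1)+\cdots+\Theta_n(X_n)}$ and with the dynamics $\psi_n=W_n\psi_{n-1}$. Once this correspondence is pinned down the remainder is the routine Poisson-series manipulation already carried out in the proofs of Lemma \ref{lemma: Poisson-like_coefficients} and Theorem \ref{thm: site_dependent_angle_coin}; one could equally well start from the explicit path-sum expansion of $\psi_n^{(\Theta_j(\cdot))}$, in the spirit of Lemma \ref{lemma: Poisson-like_coefficients}, and recognise the resulting multiple series as the claimed conditional expectation through the tower property.
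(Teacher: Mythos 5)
Your plan — peel off one Markov-chain step by conditioning on $\F_{n-1}$, recognise the inner conditional expectation as one application of a walk operator, and then invoke the induction hypothesis — is exactly the structure of the paper's proof of Theorem \ref{thm: site_dependent_angle_coin}, which is what the paper says to mimic, so in broad outline the route is right. The trouble is precisely the step you single out as needing ``the most care'' and then defer as routine bookkeeping: it does not, in fact, line up. Conditioning on $\F_{n-1}$ isolates $N_n$ with conditional rate $\Theta_n(X_n)$ and collapses the inner expectation to $e^{-\Theta_n(X_n)}(W_n\psi_0)(X_{n-1},Z_{n-1})$. Thus $W_n$ emerges as the \emph{innermost} operator acting on $\psi_0$, and the induction hypothesis applied to the initial datum $W_n\psi_0$ with coin sequence $(\Theta_1,\dots,\Theta_{n-1})$ unwinds to $W_1W_2\cdots W_{n-1}(W_n\psi_0)=W_1\cdots W_n\psi_0$, not to $\psi_n=W_nW_{n-1}\cdots W_1\psi_0$ as you assert. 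Already at $n=2$ a direct expansion of the displayed right-hand side gives $(W_1W_2\psi_0)(x,z)$, which differs from $\psi_2=W_2W_1\psi_0$ by a swap of $\Theta_1$ and $\Theta_2$.

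The source of the mismatch is that the Markov-chain time index and the quantum-walk time index run in opposite directions: in the Feynman path expansion (cf.\ the proof of Lemma \ref{lemma: Poisson-like_coefficients}), the first Poisson variable $N_1$ unfolds the \emph{outermost} walk operator, which is the \emph{last}-in-time coin $C_n$, so its rate should be $\Theta_n(X_1)$ rather than $\Theta_1(X_1)$. To make your induction close, the coin labels must be time-reversed: replace the exponent by $\Theta_n(X_1)+\Theta_{n-1}(X_2)+\cdots+\Theta_1(X_n)$ and take $N_j\mid\F_{j-1}\sim\mathrm{Poi}(\Theta_{n+1-j}(X_j))$ (equivalently, interpret $\psi_n$ as $W_1\cdots W_n\psi_0$). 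This reversal is invisible in Theorem \ref{thm: site_dependent_angle_coin} because there the coin is time-homogeneous, which is exactly why the ``slight modification'' the paper appeals to is less slight than it looks; your write-up should surface and carry this reversal explicitly rather than claim the peeling order makes the parameters match the dynamics $\psi_n=W_n\psi_{n-1}$, since no peeling order does.
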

\begin{proof}
    It follows from a slight modification to the proof of Theorem \ref{thm: site_dependent_angle_coin}. 
\end{proof}

\subsection{Yet another walk}
Let $M$ be the transition matrix of a classical symmetric nearest-neighbor random walk on $\Z$: that is $M(x,y)=\frac{1}{2}$ if $y=x\pm1$ and $0$ if not for every $x, y\in \Z$. Note that the following discussion generalizes easily to the multi-dimensional case, but for simplicity we stay with the one-dimensional case.  Indeed, $M$ is not  unitary but $e^{i\lambda M}$ is and we consider the amplitude dynamics:
\[
\psi_{n+1}(x)=e^{i(\lambda M+V(x))}\psi_n(x),
\]
where $V$ is a bounded potential and $\psi_0$ is an initial quantum state, $\psi_0=\delta_0$ for instance. 
\begin{theorem}\label{MolchanovDiscrete}
For a sequence of i.i.d. Poisson random variables $(N_1,N_2,\cdots)$ with parameter $\lambda>0$, we denote $S_n=N_1+\cdots+N_n$ and, given a classical random walk $(Y_n)$ with transition matrix $M$, we define $Y_0=X_0=x$ and $X_n=Y_{S_n}$ for all $n\geq 1$.
We have the following Feynman formula:
 \begin{align}  
 \psi_n(x)&=e^{\lambda n}\E_{0,x}\left[i^{S_n}e^{i(V(X_1)+\cdots+V(X_n))}\psi_0(X_n)\right].
 \end{align}
\end{theorem}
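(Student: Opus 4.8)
The plan is to mimic the proof of Theorem~\ref{thm: site_dependent_angle_coin}, proceeding by induction on $n$ and using the tower property over the filtration $\F_n$, since once again the coin is a matrix exponential $e^{i\lambda M}$ whose Taylor expansion in powers of $M$ produces Poisson-like coefficients. First I would establish the one-step identity: since $e^{i(\lambda M + V(x))}$ acts diagonally-in-$x$ only through the phase $e^{iV(x)}$ while $e^{i\lambda M}$ moves mass to neighbors according to $M$, expanding $e^{i\lambda M} = \sum_{k\geq 0} \frac{(i\lambda)^k}{k!} M^k$ and recognizing $M^k(x,\cdot)$ as the $k$-step transition law of the classical walk $(Y_n)$ gives, for any state $\psi$,
\begin{align*}
(e^{i(\lambda M + V(\cdot))}\psi)(x)
&= e^{iV(x)} \sum_{k=0}^\infty \frac{(i\lambda)^k}{k!} (M^k \psi)(x)
= e^{iV(x)} e^{\lambda} \E\big[ i^{N} \psi(Y_N) \,\big|\, N\sim \mathrm{Poi}(\lambda),\, Y_0 = x \big],
\end{align*}
where $Y_N$ denotes the classical walk run for a Poisson-many steps independent of everything else. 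This is the analogue of \eqref{eqns: general_coin_base_case}, and the factor $e^{\lambda}$ comes from normalizing $\frac{(i\lambda)^k}{k!}$ into $i^k \cdot e^{-\lambda}\lambda^k/k!$.

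Next I would iterate. Writing $\psi_n = e^{i(\lambda M + V(\cdot))}\psi_{n-1}$ and applying the one-step identity $n$ times, one unwinds the composition into a sum over $(k_1,\dots,k_n)$ of products of Poisson weights times a deterministic phase $e^{i(V(x_0) + V(x_1) + \cdots)}$ evaluated along the path of the classical walk subordinated by the partial sums $S_j = k_1 + \cdots + k_j$. The cleanest way to package this is the induction hypothesis
\begin{align*}
(e^{in(\lambda M + V(\cdot))}\psi)(x) = e^{\lambda n}\,\E_{0,x}\big[ i^{S_n} e^{i(V(X_1) + \cdots + V(X_n))} \psi(X_n) \big]
\end{align*}
for every state $\psi$, where $X_j = Y_{S_j}$ with $S_j = N_1 + \cdots + N_j$ and the $N_j$ are i.i.d.\ $\mathrm{Poi}(\lambda)$ independent of $(Y_m)$. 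The inductive step: condition on $\F_{n-1} = \sigma(N_1,\dots,N_{n-1})$ (together with the classical walk increments used up to that point), apply the one-step identity to the innermost application $e^{i(\lambda M + V(\cdot))}$ acting on $\psi$ at the current location $X_{n-1}$, absorb the resulting $e^{\lambda}$ and $i^{N_n}$ and $e^{iV(X_n)}$ factors, and invoke the tower property exactly as in the display chain at the end of the proof of Theorem~\ref{thm: site_dependent_angle_coin}. Taking $\psi = \psi_0$ and $x = x_0$ yields the claimed formula with $e^{\lambda n}$ in front.

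The one genuinely new bookkeeping point — and the place I expect to spend the most care — is the subordination structure: here the spatial motion between quantum-walk steps is not a deterministic shift by $\pm 1$ but a random classical walk run for $N_j$ steps, so $X_n = Y_{S_n}$ is a Markov chain sampled at the Poisson-subordinated times, and one must check that the phase accumulates as $V(X_1) + \cdots + V(X_n) = V(Y_{S_1}) + \cdots + V(Y_{S_n})$ rather than along every intermediate classical step. This is consistent because the potential phase $e^{iV(x)}$ is applied once per quantum-walk step (after the $e^{i\lambda M}$ factor), evaluated at the position reached after that step, which is precisely $X_j = Y_{S_j}$; the intermediate classical positions $Y_m$ for $S_{j-1} < m < S_j$ carry no phase. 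A secondary technical point is justifying the interchange of the infinite Taylor sum with the expectation, which is immediate here since $V$ is bounded and $M$ is a contraction (so $\|e^{i\lambda M}\| = 1$ and all series converge absolutely in operator norm), making the manipulations legitimate whenever $\psi_0 \in \ell^2$; for $\psi_0 = \delta_0$ everything is even finite at each stage. No stationary-phase or spectral input is needed — the argument is purely the algebra of matrix exponentials plus the probabilistic repackaging of Poisson weights, so modulo the subordination bookkeeping it is routine.
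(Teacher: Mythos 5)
Your overall strategy --- induction via a one-step identity plus the tower property, mirroring the proof of Theorem~\ref{thm: site_dependent_angle_coin} --- is the same as the paper's, and the subordination bookkeeping you flag is indeed the delicate point. But your one-step identity has the operator order reversed, and this changes the accumulated phase. You wrote $(W\psi)(x) = e^{iV(x)}\sum_k\frac{(i\lambda)^k}{k!}(M^k\psi)(x)$, i.e.\ $W = \mathrm{diag}(e^{iV})\circ e^{i\lambda M}$: kinetic kick first, phase applied afterward. Unrolled into the Feynman sum over paths $x = X_0 \to X_1 \to \cdots \to X_n$, this places the phase factor of the $j$th step at $X_{j-1}$, so after $n$ steps you accumulate $e^{i(V(X_0)+\cdots+V(X_{n-1}))}$ --- not the theorem's $e^{i(V(X_1)+\cdots+V(X_n))}$. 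The two differ by the nontrivial factor $e^{i(V(x)-V(X_n))}$ whenever $V$ is nonconstant. The paper's dynamics (the notation $e^{i(\lambda M + V(x))}$ is a mild abuse, since $\lambda M$ and the multiplication operator $V$ do not commute) is the opposite composition $W = e^{i\lambda M}\circ\mathrm{diag}(e^{iV})$, and the correct one-step identity is
\[
(W\psi)(x) = e^{\lambda}\,\E\big[\, i^N e^{iV(Y_N)}\psi(Y_N) \,\big|\, N\sim\mathrm{Poi}(\lambda),\ Y_0 = x\,\big],
\]
with the phase evaluated at the random arrival point $Y_N$, not the deterministic starting point $x$.

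There is a related internal inconsistency worth flagging: you correctly note that the phase should be evaluated at ``the position reached after that step, which is precisely $X_j = Y_{S_j}$,'' yet you also claim the phase is applied ``after the $e^{i\lambda M}$ factor'' --- the latter would actually place it at the position \emph{before} the kinetic kick. The resolution is that $\mathrm{diag}(e^{iV})$ acts \emph{before} (innermost to) $e^{i\lambda M}$ in each $W$; expanding $W^n$ as a matrix product, the factor in $W(X_{j-1}, X_j) = (e^{i\lambda M})(X_{j-1}, X_j)\,e^{iV(X_j)}$ attaches to the second index, which is exactly how $V$ ends up sampled at $X_1,\ldots,X_n$. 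With the one-step identity corrected accordingly, the rest of your inductive argument, the absolute-convergence observations, and the appeal to the tower property all go through and recover the paper's proof.
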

\begin{proof}
    By induction, conditioning on $N_1$, and using time-homogeneity of the chain $(X_n)$, one has:
    \begin{align*}
       \psi_{n+1}(x)&=e^{\lambda (n+1)}\E_{0,x}\left[i^{S_{n+1}}e^{i(V(X_1)+\cdots+V(X_{n+1}))}\psi_0(X_{n+1})\right] \\
       &=e^{\lambda (n+1)}\sum_{k=0}^\infty\frac{ e^{-\lambda}\lambda^k}{k!}i^k
\E_{0,x}\left[i^{N_2+\cdots+N_{n+1}}e^{i(V(Y_k)+V(X_2)\cdots+V(X_{n+1}))}\psi_0(X_{n+1})\right] \\       &=e^{\lambda (n+1)}\sum_{k=0}^\infty\frac{ e^{-\lambda}\lambda^k}{k!}i^k M^ke^{iV(x)}\E_{0,x}\left[i^{S_{n}}e^{i(V(X_1)+\cdots+V(X_{n}))}\psi_0(X_{n})\right] \\
&=e^{\lambda n}e^{i\lambda M}e^{iV(x)}\E_{0,x}\left[i^{S_{n}}e^{i(V(X_1)+\cdots+V(X_{n}))}\psi_0(X_{n})\right]\\
&=e^{i(\lambda M+V(x))}\psi_n(x).
    \end{align*}
\end{proof}
The previous result has a continuous-time version which was known for some time as  Molchanov formula (see \cite{Carmona-StFlour}). Let $(X_t)_{t\geq 0}$ be the continuous-time Markov chain with transition matrix $M$ and $(N_t)_{t\geq 0}$ be the Poisson process with intensity $\lambda>0$ describing the jump times of the chain. Given a bounded potential $V(x)_{x\in \Z}$, we have the following:
\begin{proposition}\label{Molchanov}
   Consider the continuous-time discrete-space Schr\"odinger equation
    \begin{align}
        i\frac{\partial\psi}{\partial t}= -(\lambda M+V(x))\psi.
    \end{align}
     For a given function $\psi(0,x)$, its solution $\psi(t,x)=e^{it(\lambda M+V(x))}\psi(0,x)$
 admits the probabilistic representation:
 \begin{align}
 \psi(t,x)
 &=e^{\lambda t}\E_{0,x}\left[i^{N_t}e^{i\int_0^tV(X_s)\mathrm ds}\psi(0,X_t)\right].
 \label{eq:Molchanov}
 \end{align}
\end{proposition}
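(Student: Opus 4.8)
\textbf{Proof proposal for Proposition \ref{Molchanov}.} The plan is to verify directly that the right-hand side of \eqref{eq:Molchanov} solves the Cauchy problem and then invoke uniqueness. Put
\[
u(t,x) := e^{\lambda t}\,\E_{0,x}\!\left[\,i^{N_t}\,e^{i\int_0^t V(X_s)\,\mathrm ds}\,\psi(0,X_t)\,\right],
\]
and note that, since the integrand has modulus at most $\sup_y|\psi(0,y)|$ and $V$ is bounded, $u(t,\cdot)$ is well defined and locally bounded in $t$. Because $M$ is symmetric and $V$ is real, the operator $H:=\lambda M+V$ on $\ell^2(\Z)$ (with $V$ acting by multiplication) is bounded and self-adjoint, so the equation $i\partial_t\psi=-H\psi$ with datum $\psi(0,\cdot)$ has the unique solution $\psi(t,\cdot)=e^{itH}\psi(0,\cdot)$; it therefore suffices to show that $u$ satisfies the same equation and initial condition $u(0,\cdot)=\psi(0,\cdot)$.

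First I would condition on the first jump time $\tau_1$ of the chain $(X_t)$, which is exponential with rate $\lambda$ and, by the strong Markov property, independent of the post-jump evolution. On $\{\tau_1>t\}$ (probability $e^{-\lambda t}$) one has $N_t=0$, $X_s\equiv x$ on $[0,t]$ and $\int_0^t V(X_s)\,\mathrm ds=tV(x)$; on $\{\tau_1=r\le t\}$ the chain jumps from $x$ to a neighbour $y$ with probability $M(x,y)$ and restarts afresh from $(N=1,X=y)$ over the remaining time $t-r$, contributing one extra unit to the exponent of $i$ and the increment $\int_0^{t-r}V\,\mathrm ds$ to the exponential. Collecting the two cases and using $\E_{0,y}[\,i^{N_{t-r}}e^{i\int_0^{t-r}V(X_s)\mathrm ds}\psi(0,X_{t-r})\,]=e^{-\lambda(t-r)}u(t-r,y)$, a short computation gives the renewal (Duhamel) identity
\[
u(t,x)=e^{itV(x)}\psi(0,x)+i\lambda\int_0^t e^{i(t-s)V(x)}\,(Mu(s,\cdot))(x)\,\mathrm ds,\qquad x\in\Z,\ t\ge 0 .
\]

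From this identity the right-hand side is continuously differentiable in $t$, hence so is $u$, and differentiating under the integral sign yields $\partial_t u(t,x)=iV(x)\,u(t,x)+i\lambda\,(Mu(t,\cdot))(x)=i\,(Hu(t,\cdot))(x)$, i.e. $i\partial_t u=-Hu$, while clearly $u(0,x)=\psi(0,x)$. Uniqueness then forces $u=\psi$, which is \eqref{eq:Molchanov}. The only genuinely delicate points are the justification of the first-jump/strong-Markov decomposition and the interchange of the expectation with the time integral, both of which are routine given the boundedness of $V$ and $\psi(0,\cdot)$ and the exponential tail of $\tau_1$. I would also remark that \eqref{eq:Molchanov} can alternatively be obtained by passing to the limit in the discrete Feynman formula of Theorem \ref{MolchanovDiscrete}: applying it to the walk with one-step operator $e^{i\epsilon(\lambda M+V)}$ and running $\lfloor t/\epsilon\rfloor$ steps gives $\psi_{\lfloor t/\epsilon\rfloor}(\cdot)=e^{i\epsilon\lfloor t/\epsilon\rfloor(\lambda M+V)}\psi(0,\cdot)\to e^{itH}\psi(0,\cdot)$, while the rescaled Poisson sum and embedded walk converge (as in Lemma \ref{lemma: classical_scaling_limit}) to $N_t$ and to $X_t=Y_{N_t}$, the Riemann sum $\epsilon\sum_{j}V(X^\epsilon_j)$ converges to $\int_0^t V(X_s)\,\mathrm ds$, and dominated convergence recovers the right-hand side of \eqref{eq:Molchanov}.
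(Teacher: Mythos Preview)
Your proof is correct. Both your argument and the paper's verify that the right-hand side of \eqref{eq:Molchanov} satisfies the Schr\"odinger equation with the given initial datum, and then appeal (explicitly in your case, implicitly in the paper's) to uniqueness for the Cauchy problem. The difference lies in how the time derivative is obtained. You condition on the first jump time $\tau_1$ of the chain and derive a Duhamel/renewal identity for $u$, then differentiate; this has the advantage of producing a global Volterra-type equation for $u$ and making the $C^1$ regularity in $t$ transparent before any differentiation is attempted. The paper instead performs a direct short-time expansion: starting from the representation at time $t+h$, it conditions on $N_h\in\{0,1,\ge 2\}$ and reads off $\partial_t u$ from the incremental ratio $(u(t+h,\cdot)-u(t,\cdot))/h$ as $h\to 0$. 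The paper's route is shorter but leaves the $O(h^2)$ control somewhat informal; your route is a little longer but more self-contained on the analytic side. Your explicit observation that $H=\lambda M+V$ is bounded and self-adjoint on $\ell^2(\Z)$, which underwrites uniqueness, is a clarification the paper does not spell out.
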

\begin{proof}
    Starting from  formula \eqref{eq:Molchanov} at $t+h$, conditioning on $N_h$, and using time-homogeneity, we have:
    \begin{align*}
        \psi(t+h,x)&=e^{\lambda(t+h)}\E_{0,x}\left[i^{N_{t+h}}e^{i\int_0^{t+h}V(X_s)ds}\psi(0,X_{t+h})\right]\\
        &=e^{\lambda(t+h)}e^{-\lambda h}e^{ihV(x)}
\E_{0,x}\left[i^{N_{t}}e^{i\int_0^tV(X_s)ds}\psi(0,X_{t})\right]\\
&\quad +e^{\lambda(t+h)}e^{-\lambda h}(\lambda h)iM\E_{0,x}\left[i^{N_{t}}e^{i\int_0^tV(X_s)ds}\psi(0,X_{t})\right]+O(h^2),
    \end{align*}
    where we only considered $N_h=0$ and $N_h=1$  since $\Pr[N_h\geq 2]=O(h^2)$. Therefore,
    \begin{align}
      \psi(t+h,x)-\psi(t,x)=ih\left( (V(x)+ \lambda M)\psi(t,x)\right) +O(h^2),
    \end{align}
    and dividing by $h$ and taking the limit $h\to 0$ concludes the proof.
\end{proof}
\newpage

\section{Conclusion}
\subsection{Summary}
We introduced a Feynman-type (discrete path-integral) representation that links one-dimensional quantum walks on $\mathbb{Z}$ to a four-state Markov
additive process. This representation provides a unified probabilistic handle on the
dynamics: it isolates a tilted-kernel structure whose spectral decomposition explains
the ballistic regime, and it yields a transparent route to continuum limits. Using this
framework, we demonstrated the derivation of the well-known ballistic weak limit for the position distribution, and we
derived a probabilistic representation to the quantum transport PDE, which is a system of transport PDEs with phase interaction term and is the hyperbolic scaling
limit of quantum walks. Meanwhile, the Feynman-type representation is robust across different variants of one-dimensional quantum walks, which include both time-inhomogeneous and site-inhomogeneous coins. Also we noted that our probabilistic representation offers a Monte Carlo way to compute the solution for this type of PDE. 

A distinctive feature of our approach is that both asymptotic descriptions --- ballistic
limits of quantum walks and large deviation tails of Markov additive processes---arise from the same tilted kernel machinery. The spectral branch that is responsible for the large deviation properties is removed when applying the Feynman formula to quantum walks; the asymptotic analysis then follows from a typical stationary-phase analysis of the remaining spectral branch. On the continuum side, our Feynman formula naturally provides a probabilistic formulation of the solution to the quantum PDE using also  hyperbolic scaling limit of the Markov additive process. 

\subsection{Outlook}

In Section \ref{sec: extensions}, we demonstrated the robustness of the Feynman formula on inhomogeneous coins; yet these coins can yield non-ballistic behaviors, including localized and hybrid behaviors, as seen in the numerical experiments by \cite{AhmadSajjadSajid2020} for site-dependent coins and \cite{Panahiyan2018} for time-dependent coins. It is natural to ask how these properties are reflected in the associated Markov additive process. Meanwhile, quantum walks with higher-dimensional state space and chirality space allow one to study entanglement effects involved in a quantum walk (see \cite{WKKK_2008}  which discussed quantum walks on $\Z^2$ and \cite{VenegasAndraca2005} which discussed one-dimensional quantum walks with higher dimensional, entangled coins). 
It is natural to extend the robustness of the Feynman formula to these higher-dimensional generalizations. Lastly, we are also currently working on probabilistic representations for the nonlinear quantum walks and their associated Dirac PDEs described in \cite{MS4-2018} and \cite{MS-2020}.

\appendix

\section*{Sketch of proof of Lemma \texorpdfstring{\ref{lemma: integral_asymptotics}}{}}\label{appendix: fourier} \setcounter{section}{1}
The method of stationary phase (see for instance \cite{Olver_1997} and \cite{Wong_saddle_2001} for details) entails that Fourier integrals of the form
\begin{align}\label{eqns: Fourier_integrals}
    \int_a^b g(k) e^{in\Theta(k)}\mathrm dk, 
\end{align}
where $\Theta$ is real-valued and $g$ is either real or complex-valued,
admit the asymptotic relation\footnote{We say that two sequences $f, g:\N\to \R$ are asymptotically equivalent, written $f(n)\sim g(n)$ as $n\to \infty$, if $f(n) = g(n)(1+o(n))$ as $n\to\infty$; equivalently $\lim_{n\to \infty}\frac{f(n)}{g(n)}=1$.} 
\begin{align}\label{eqns: stationary_phase_heuristic}
      \int_a^b g(k) e^{in\Theta(k)}\mathrm dk \sim \sum_{j=1}^N g(k_j)e^{in\Theta(k_j)}\sqrt{\frac{2\pi i }{n\Theta''(k_j)}}\quad \mbox{as}\quad n\to \infty,
\end{align}
where $\{k_j\}_{j=1}^N$ are critical points to $\Theta(k)$ that are interior to $[a, b]$ for some $N\in \mathbb{N}$ under certain regularity assumption on $g$ and $\Theta$. In this section we would use the method of stationary phase to both derive \eqref{eqns: integral_quantum_limit_0}, \eqref{eqns: integral_quantum_limit_1} and \eqref{eqns: integral_quantum_limit_cross} formally and to justify all assumptions we made in the formal computation in \ref{sec: rigorous_justificaiton}.
\subsection{The limit \texorpdfstring{\eqref{eqns: integral_quantum_limit_0}}{in Lemma 3.6}}
\subsubsection{Formal computation of single application of stationary phase}
We start with  by formally applying the method of stationary phase for a fix $a\in (-\cos \theta + \epsilon, \cos\theta -\epsilon )$ on \eqref{eqns: integral_quantum_limit_0},  assuming that we can replace $\ceil{an}$ with $an$, when $n$ is large enough (see \ref{sec: rigorous_justificaiton} for a justification):  
\begin{align}\label{eqns: replace_with_an}
    e^{n\theta} \sum_{j=1, 2}I_n(0, an, B_j)
   & =e^{n\theta}\Big(\frac{1}{2\pi} \int_{-\pi}^\pi r_{B_1}(k, 0)\ell_{B_1}(k, 0) e^{-n\theta +in\cos^{-1}(\cos k \cos \theta )-iank}\mathrm dk \\ 
    & \qquad {}+ \frac{1}{2\pi}\int_{-\pi}^\pi r_{B_2}(k, 0) \ell_{B_2}(k, 0) e^{-n\theta -in\cos^{-1} (\cos k \cos\theta)-iank} \mathrm dk \Big)\notag \\ \label{eqns: Fourier_integrals_no_theta}
    &=\frac{1}{2\pi} \int_{-\pi}^\pi r_{B_1}(k, 0)\ell_{B_1}(k, 0) e^{ in[\cos^{-1}(\cos k \cos \theta )-ak]}\mathrm dk \\
    & \qquad {}+ \frac{1}{2\pi}\int_{-\pi}^\pi r_{B_2}(k, 0) \ell_{B_2}(k, 0) e^{-in[\cos^{-1} (\cos k \cos\theta)-ak]}\mathrm dk \notag, 
\end{align}
which is the sum of two Fourier integrals of the form \ref{eqns: Fourier_integrals}. 

Defining 
\begin{align*}
    \Theta_{B_j}(k; y):=
\begin{cases}
    \cos^{-1}(\cos k \cos \theta ) - ky,\quad j = 1\\
  -   \cos^{-1}(\cos k \cos \theta ) - ky,\quad j = 2
\end{cases}
\end{align*}
and applying the method of stationary phase, that is the asymptotic relation \eqref{eqns: stationary_phase_heuristic},  on \eqref{eqns: Fourier_integrals_no_theta} gives the asymptotic relation: 

\begin{align} \notag 
  e^{n\theta} \sum_{j=1, 2}I_n(0, an, B_j) &\sim \frac{1}{2\pi} r_{B_1} (k_1, 0) \ell_{B_1}(k_1, 0) e^{in\Theta_{B_1}(k_1; a)} \sqrt{\frac{2\pi i}{\Theta_{B_1}''(k_1, a)}}\\ \label{eqns: four_term_decomposition}
  & \qquad {} +\frac{1}{2\pi} r_{B_1} (k_2, 0) \ell_{B_1}(k_2, 0) e^{in\Theta_{B_1}(k_2; a)} \sqrt{\frac{2\pi i}{\Theta_{B_1}''(k_2, a)}} \\ \notag 
  & \qquad {}+\frac{1}{2\pi} r_{B_2} (k_3, 0) \ell_{B_2}(k_3, 0) e^{in\Theta_{B_2}(k_3; a)} \sqrt{\frac{2\pi i}{\Theta_{B_2}''(k_3, a)}} \\
  &\qquad {}+\frac{1}{2\pi} r_{B_2} (k_4, 0) \ell_{B_2}(k_4, 0) e^{in\Theta_{B_2}(k_4; a)} \sqrt{\frac{2\pi i}{\Theta_{B_2}''(k_4, a)}},\notag 
\end{align}
where $k_1 = k_1(a), k_2= k_2(a)$ are critical points of $\Theta_{B_1}$ with respect to $k$, and $k_3=k_3(a), k_4=k_4(a)$ are critical points of $\Theta_{B_2}$ with respect to $k$. In fact, the four critical points satisfy the  relations 
\begin{align}\label{eqns: critical_point_relation}
    k_2 = \pi - k_1, k_3 = -k_2, k_4 = -k_1.
\end{align} 
Using the critical point equation $\Theta'(k_1, a) = 0,$ we obtain 
\begin{align}\label{eqns: critical_point_equation}
    \frac{\sin k_1 \cos \theta}{\sqrt{1-\cos^2 k_1 \cos^2 \theta}} = a,\quad \mbox{and thus} \quad \sin^2 k_1=\frac{a^2}{1-a^2}\tan ^2\theta. 
\end{align}
We specify $k_1$ to be the unique element in $(-\pi ,\pi)$ such that 
\begin{align}\label{eqns: critical_point_definition}
    \sin k_1 = \frac{a}{\sqrt{1-a^2}}\tan \theta, && \cot k_1 = \sqrt{\csc^2 k_1 - 1} = \frac{\sqrt{\cos^2 \theta - a^2}}{\abs{a}\sin \theta }.
\end{align}
Meanwhile, the second derivatives in \ref{eqns: four_term_decomposition} are taken with respect to the parameter $k$ and are given by
\begin{align}\label{eqns: 2nd_derivative_0}
    \Theta''_1(k, a) = \frac{\cos k \cos \theta \sin^2 \theta}{(1-\cos^2 k \cos^2 \theta)^{3/2}}, && \Theta''_2(k, a) = -\frac{\cos k \cos \theta \sin^2 \theta}{(1-\cos^2 k \cos^2 \theta)^{3/2}}, 
\end{align}
for every $k\in \R$. Their values at the saddle points could be computed by noting the identity: 
\begin{align}\label{eqns: 2nd_derivative_at_saddle_0}
    \frac{\cos k_1 \cos \theta \sin^2\theta}{(1-\cos^2 k \cos^2 \theta)^{3/2}} = \sgn(a) \frac{(1-a^2) \sqrt{\cos^2 \theta - a^2}}{\sin \theta } = \frac{\sgn(a)}{\pi f_K(a; \theta)}, 
\end{align}
where $f_K$ is defined in \eqref{eqns: Konnos_distribution}. In addition, the eigenvector terms in \ref{eqns: four_term_decomposition} are given by: 
\begin{equation}\label{eqns: eigenvectors_0}
   \begin{split}
       r_{B_1}(k, 0)\ell_{B_1}(k, 0) &= \frac{1}{4}\Big(1+ \frac{\sin k \cos \theta }{\sqrt{1-\cos^2\theta \cos^2 k}}\Big), \\ r_{B_2}(k, 0)\ell_{B_2}(k, 0) &= \frac{1}{4}\Big(1- \frac{\sin k \cos \theta }{\sqrt{1-\cos^2\theta \cos^2 k}}\Big),  
   \end{split} 
\end{equation}
which can be further simplified using \eqref{eqns: critical_point_equation}. Substituting \eqref{eqns: 2nd_derivative_0} and \eqref{eqns: eigenvectors_0} into \eqref{eqns: four_term_decomposition} with the corresponding values at saddle points given by \eqref{eqns: 2nd_derivative_at_saddle_0} and \eqref{eqns: critical_point_equation}  shows that $e^{n\theta}\sum_{j=1, 2}I_n(0, an, B_j)$ is asymptotically equivalent to:
\begin{align}
    \label{eqns: first_simplification}
    & \frac{1}{2\pi}\frac{1+a}{4}\sqrt{\frac{2\pi^2}{n} f_K(a; \theta)}\\
\times & \{e^{in \cos^{-1}(\cos k_1 \cos \theta)-ik_1an + i\sgn(a)\frac{\pi}{4}} + e^{in \cos^{-1}(\cos k_2 \cos \theta)-ik_2an - i\sgn(a)\frac{\pi}{4}} \\
&+e^{-in \cos^{-1}(\cos k_3 \cos \theta)-ik_3an + i\sgn(a)\frac{\pi}{4}} + e^{-in \cos^{-1}(\cos k_4 \cos \theta)-ik_4an - i\sgn(a)\frac{\pi}{4}}\}.  
\end{align}
Using the fact that $k_4 = -k_1$ and $k_3 = -k_2$ as in \ref{eqns: critical_point_relation}, we can simplify the sum of four exponential terms in \eqref{eqns: first_simplification} into
\begin{align}
\label{eqns: oscillation_terms_simplification}
& \qquad 2\cos (n\Theta_{B_1}(k_1, a) + \sgn (a)\frac{\pi}{4}) + 2\cos(n\Theta_{B_1}(k_2, a) -\sgn(a)\frac{\pi}{4})\\ 
=& 4 \cos (\frac{n}{2}[\Theta_{B_1}(k_1, a) + \Theta_{B_1}(k_2, a)]) \cos (\frac{n}{2}[\Theta_{B_1}(k_1, a) -\Theta_{B_2}(k_2, a)] + \sgn(a)\frac{\pi}{4})\\ \notag 
=&4 \cos (\frac{n\pi (1-a)}{2})\cos(n \Theta_{B_1}(k_1, a)-\frac{n\pi (1-a)}{2} +\sgn(a)\frac{\pi}{4})\\ \notag 
 =&\frac{4}{\sqrt{2}}\cos(\frac{n\pi (1-a)}{2})\Big[\cos(n\Theta_{B_1}(k_1, a) - \frac{n\pi(1-a)}{2}) \\
 & \qquad \qquad \qquad \qquad  \qquad \qquad {} - \sgn(a)\sin(n\Theta_{B_1}(k_1, a) - \frac{n\pi(1-a)}{2})\Big], \notag
\end{align}
where we used the sum-to-product formula in the first equality, and the following identity for the second equality:
\begin{align}\label{eqns: angle_identity}
    \Theta_{B_1}(k_1, a) + \Theta_{B_1}(k_2, a) =n\pi(1-a),
\end{align}
which follows from the relation $k_2 = \pi - k_1$ (see again \ref{eqns: critical_point_relation}) and the identity $\cos^{-1}(-z) = \pi -  \cos ^{-1}z$ for every $z\in (-1, 1)$. Putting \eqref{eqns: oscillation_terms_simplification} into \eqref{eqns: first_simplification}, we have:
\begin{equation}\label{eqns: full_single_stationary_phase}
\begin{split}
    &\frac{1+a}{2\sqrt{n}}\sqrt{f_K(a; \theta)}\cos(\frac{n\pi (1-a)}{2})\\ 
\times & 
\Big[\cos(n\Theta_{B_1}(k_1, a) - \frac{n\pi(1-a)}{2}) - \sgn(a)\sin(n\Theta_{B_1}(k_1, a) - \frac{n\pi(1-a)}{2})\Big]. 
\end{split}
\end{equation}

\subsubsection{Formal computation for integrating leading terms}
In the previous section, we formally showed that $e^{n\theta} \sum_{j=1,2}I_n(0, an, B_j)$ is asymptotically equivalent to \eqref{eqns: full_single_stationary_phase}; in this section we continue from there to formally verify \eqref{eqns: integral_quantum_limit_0}. To begin with, taking modulus square on \eqref{eqns: full_single_stationary_phase} gives the asymptotic relation:
\begin{align}
    &e^{2n\theta} \abs{\sum_{j=1,2}I_n(0, an, B_j)}^2\\
    &\sim \frac{(1+a)^2}{4n }f_K(a; \theta)\cos^2(\frac{n\pi (1-a)}{2})[1-\sgn(a)\sin(2n\Theta_{B_1}(k,a) - n\pi (1-a))] \notag\\
    &= \frac{(1+a)^2}{8n }f_K(a; \theta)(1+\cos(n\pi(1-a))[1-\sgn(a)\sin(2n\Theta_{B_1}(k,a) - n\pi (1-a))]. \notag
\end{align}
It follows that we have the asymptotic relation:
\begin{align} \label{eqns: case_0_integral_asymptotic_computation}
    & \qquad n\int_{\frac{1}{n}\floor{y_1 n}}^{\frac{1}{n}\floor{y_2 n}} e^{2n\theta} \abs{\sum_{j=1,2}I_n(0, an, B_j)}^2 \mathrm d a\\ \notag
\sim& \int_{\frac{1}{n}\floor{y_1 n}}^{\frac{1}{n}\floor{y_2 n}} \frac{1}{8} f_K(a; \theta)(1+a)^2da+\int_{\frac{1}{n}\floor{y_1 n}}^{\frac{1}{n}\floor{y_2 n}}\frac{1}{8} f_K(a; \theta)(1+a)^2\cos(n\pi(1-a)) \mathrm d a\\ \notag
&-\frac{1}{8}\int_{\frac{1}{n}\floor{y_1 n}}^{\frac{1}{n}\floor{y_2 n}}f_K(a; \theta)(1+a)^2\sgn(a) \sin (2n\Theta_{B_1}(k_1, a)-n\pi(1-a))\mathrm d a\\ \notag
&-\frac{1}{8}\int_{\frac{1}{n}\floor{y_1 n}}^{\frac{1}{n}\floor{y_2 n}}f_K(a; \theta)(1+a)^2\sgn(a) \sin (2n\Theta_{B_1}(k_1, a)-n\pi(1-a))\cos(n\pi(1-a)) \mathrm d a.
\end{align}
Note that this asymptotic relation is valid because the domain of integral is  contained in $(-\cos\theta+\epsilon, \cos\theta-\epsilon)$ which is away from the problematic boundary points $\pm \cos\theta$. Due to the existence of fast oscillation terms, as $n\to \infty$, the last three integrals in the asymptotic equivalence converge to $0$ by both the Riemann-Lebesgue Lemma and the method of stationary phase (see \ref{sec: rigorous_justificaiton} for details). The first term converges and we have
\begin{align}
    \lim_{n\to \infty} n\int_{\frac{1}{n}\floor{y_1 n}}^{\frac{1}{n}\floor{y_2 n}} e^{2n\theta} \abs{\sum_{j=1,2}I_n(0, an, B_j)}^2 \mathrm da =  \frac{1}{8}\int_{y_1}^{y_2} (1+a^2) f_K(a; \theta) \mathrm d a.
\end{align}
\subsubsection{Rigorous justification}\label{sec: rigorous_justificaiton}

\paragraph{Justifying the replacement of \texorpdfstring{$\ceil{an}$}{[an]} with \texorpdfstring{$an$}{an}.}
 Because of the assumption that $a\in (-\cos \theta +\epsilon, \cos\theta -\epsilon)$, we can in fact have the large $n$ asymptotic relation \eqref{eqns: four_term_decomposition} with $a$ replaced by $a_n:=\ceil{an}/n$, which is uniform across all choices of $a$. The precise argument has been made in Proposition 2.2 of \cite{SunadaTate_2012}.  Nonetheless, the formal procedure of using $a$ and $a_n$ yield similar computations.

\paragraph{Justifying the limits in \texorpdfstring{\eqref{eqns: case_0_integral_asymptotic_computation}}{}.}
The convergence 
\begin{align}
     \lim_{n\to \infty}\int_{\frac{1}{n}\floor{y_1 n}}^{\frac{1}{n}\floor{y_2 n}}f_K(a; \theta)(1+a)^2\cos(n\pi(1-a)) \mathrm d a=0,
\end{align}
is a direct application of the Riemann-Lebesgue Lemma by noting that $f_K(a; \theta)(1+a)^2$ is integrable over $[y_1, y_2]$. 
Next, for the convergence:
\begin{align}\label{eqns: 2nd_term_convergence_RL}
  && \lim_{n\to \infty}\int_{\frac{1}{n}\floor{y_1 n}}^{\frac{1}{n}\floor{y_2 n}}f_K(a; \theta)(1+a)^2\sgn(a) \sin (2n\Theta_{B_1}(k_1, a)-n\pi(1-a)) \mathrm d a=0, 
\end{align}
 without loss of generality, we consider $0\leq y_1 < y_2$ or $y_1< y_2\leq 0$ (otherwise we split into two integrals). Then the Riemann-Lebesgue Lemma follows because of the integrability of $f_K(a; \theta)(1+a)^2\sgn(a)$ and the fact that
\begin{align}
 &\inf_{a\in (y_1, y_2)}  \abs{\frac{d}{da}[2\Theta_{B_1}(k_1, a) - \pi(1-a)]} \\ \notag 
   &=\inf_{a\in (y_1, y_2)}\abs{2\Big[\frac{\cos \theta \sin k_1}{\sqrt{1-\cos^2\theta \cos^2k_1 }}k_1'(a) - ak_1'(a) -k_1(a)\Big] + \pi}\\
&= 2\inf_{a\in (y_1, y_2)}\abs{\frac{\pi}{2}-k_1(a)}>0,  \notag 
\end{align}
 since $y_1, y_2\in (-\cos \theta+\epsilon , \cos \theta - \epsilon)$. For the last convergence in \eqref{eqns: case_0_integral_asymptotic_computation}, by the sum-to-product formula, we have
 \begin{align}
&\lim_{n\to \infty}\int_{\frac{1}{n}\floor{y_1 n}}^{\frac{1}{n}\floor{y_2 n}}f_K(a; \theta)(1+a)^2\sgn(a) \sin (2n\Theta_{B_1}(k_1, a)-n\pi(1-a))\cos(n\pi(1-a))\mathrm d a\\ \label{eqns: RL_more_difficult_term1}
&\qquad =\frac{1}{2}\lim_{n\to \infty}\int_{\frac{1}{n}\floor{y_1 n}}^{\frac{1}{n}\floor{y_2 n}}f_K(a; \theta)(1+a)^2\sgn(a) \sin (2n\Theta_{B_1}(k_1, a))\mathrm d a\\ \label{eqns: RL_more_difficult_term2}
&\qquad +\frac{1}{2}\lim_{n\to \infty} \int_{\frac{1}{n}\floor{y_1 n}}^{\frac{1}{n}\floor{y_2 n}}f_K(a; \theta)(1+a)^2\sgn(a) \sin (2n\Theta_{B_1}(k_1, a)-2n\pi(1-a))\mathrm d a. 
\end{align}
In fact both limits \eqref{eqns: RL_more_difficult_term1} and \eqref{eqns: RL_more_difficult_term2} vanish due to similar reasons. We consider only the limit \eqref{eqns: RL_more_difficult_term1}; assume without loss of generality that $y_1\leq 0\leq y_2$ (the case $y_1, y_2$ with the same signs can be obtained by adding two integrals of the previous type). Next, we split the limit \eqref{eqns: RL_more_difficult_term1} into
\begin{align}\label{eqns: RL_stationary_phase_term}
    &\frac{1}{2}\lim_{n\to \infty}\int_{0}^{\frac{1}{n}\floor{y_2 n}}f_K(a; \theta)(1+a)^2\sgn(a) \sin (2n\Theta_{B_1}(k_1, a)) \mathrm d a\\ \label{eqns: RL_non_stationary_phase_term}
    & \, \qquad {} +  \frac{1}{2}\lim_{n\to \infty} \int_{\frac{1}{n}\floor{y_1 n}}^{0}f_K(a; \theta)(1+a)^2\sgn(a) \sin (2n\Theta_{B_1}(k_1, a))\mathrm d a. 
\end{align}
For $\eqref{eqns: RL_stationary_phase_term}$, we note that
\begin{align}
    \frac{d}{da}\Theta_{B_1}(k_1, a) = k_1(a) = 0\quad \mbox{for}\quad a\in [0, y_2) \quad \mbox{implies that}\quad a = 0.
\end{align}
Therefore, $a=0$ is the unique critical point to $\Theta_{B_1}(k_1(a), a)$ over the domain of integration. By the method of stationary phase \eqref{eqns: stationary_phase_heuristic}, the integral \eqref{eqns: RL_stationary_phase_term} is of order $O(n^{-1/2})$ so the limit vanishes. For $\eqref{eqns: RL_non_stationary_phase_term}$, the limit vanishes similarly to \eqref{eqns: 2nd_term_convergence_RL} by noting that 
\begin{align}
    \inf_{a\in {(y_2, 0)}}\abs{\frac{d}{da}\Theta_{B_1}(k_1, a)} =  \inf_{a\in {(y_2, 0)}} \abs{k_1(a)}>0,
\end{align}
since $k_1(a)$ is in the third quadrant as seen in its definition in \ref{eqns: critical_point_definition}. 

\subsection{The remaining two limits}
The rigorous justifications of the limits  \eqref{eqns: integral_quantum_limit_1} and \eqref{eqns: integral_quantum_limit_cross} are similar to the case of \eqref{eqns: integral_quantum_limit_0}. In this section we will be only showing the additional formal computations that are needed to verify the two limits. 

\subsubsection{Formal computation for \texorpdfstring{\eqref{eqns: integral_quantum_limit_1}}{}}
The only difference between the asymptotic relation for $e^{n\theta}\sum_{j=1, 2}I_n(1, an, B_j)$ and that of $e^{n\theta}\sum_{j=1, 2}I_n(0, an, B_j)$ as seen in \eqref{eqns: four_term_decomposition} are the eigenvector terms, which are given by:
\begin{equation}\label{eqns: case1_eigenvectors}
\begin{split}
        r_{B_1}(k, 0) \ell_{B_1}(k, 0) &= \frac{1}{4i}\frac{\cos k \sin \theta+i\sin k \sin \theta}{\sqrt{1-\cos^2 k \cos^2 \theta}}, \\
      r_{B_2}(k, 0) \ell_{B_2}(k, 0) &= -\frac{1}{4i}\frac{\cos k \sin \theta+i\sin k \sin \theta}{\sqrt{1-\cos^2 k \cos^2 \theta}}. 
\end{split}
\end{equation}

Their values at the critical points could be simplified by noting that:

\begin{equation}\label{eqns: case_1_eigenvectors_critical_points}
\begin{split}
    \frac{\cos k_1\sin \theta}{\sqrt{1-\cos^2 k_1 \cos^2 \theta}} &= \sgn(a)\sec\theta \sqrt{\cos^2 \theta - a^2},\\  \frac{\sin k_1\sin \theta}{\sqrt{1-\cos^2 k_1 \cos^2 \theta}} &= a\tan\theta. 
\end{split}
\end{equation}
Using \eqref{eqns: 2nd_derivative_0} and \eqref{eqns: case1_eigenvectors}  with the corresponding values at the saddle points given by \eqref{eqns: 2nd_derivative_at_saddle_0} and \eqref{eqns: case_1_eigenvectors_critical_points} shows that $e^{n\theta}\sum_{j=1, 2}I_n(1, an, B_j)$ is asymptotically equivalent to the sum: 
\begin{align}\label{eqns: case_1_1st_part}
     & \qquad \frac{1}{2\pi}\frac{\sgn(a) \sqrt{\cos^2\theta - a^2}\sec \theta}{4i}\sqrt{\frac{2\pi^2}{n} f_K(a; \theta)}\\ \notag
\times & \{e^{in \cos^{-1}(\cos k_1 \cos \theta)-ik_1an + i\sgn(a)\frac{\pi}{4}} - e^{in \cos^{-1}(\cos k_2 \cos \theta)-ik_2an - i\sgn(a)\frac{\pi}{4}}\\ 
&-e^{-in \cos^{-1}(\cos k_3 \cos \theta)-ik_3an + i\sgn(a)\frac{\pi}{4}} + e^{-in \cos^{-1}(\cos k_4 \cos \theta)-ik_4an - i\sgn(a)\frac{\pi}{4}}\} \notag\\ \notag
+&\\ \label{eqns: case_1_2nd_part}
& \qquad \frac{1}{2\pi}\frac{a\tan \theta}{4}\sqrt{\frac{2\pi^2}{n} f_K(a; \theta)}\\\notag
\times & \{e^{in \cos^{-1}(\cos k_1 \cos \theta)-ik_1an + i\sgn(a)\frac{\pi}{4}} + e^{in \cos^{-1}(\cos k_2 \cos \theta)-ik_2an - i\sgn(a)\frac{\pi}{4}}\\
&+e^{-in \cos^{-1}(\cos k_3 \cos \theta)-ik_3an + i\sgn(a)\frac{\pi}{4}} + e^{-in \cos^{-1}(\cos k_4 \cos \theta)-ik_4an - i\sgn(a)\frac{\pi}{4}}\}.  \notag
\end{align}
The oscillation term in \eqref{eqns: case_1_1st_part} is given by 
\begin{equation*}
\begin{split}
  & 2 \cos (n\Theta_{B_1}(k_1, a) + \sgn(a)\frac{\pi}{4}) -  2\cos (n\Theta_{B_1}(k_2, a) -\sgn(a)\frac{\pi}{4})\\ \notag 
   =& -4 \sin (\frac{n\pi (1-a)}{2})\sin(n \Theta_{B_1}(k_1, a)-\frac{n\pi (1-a)}{2} +\sgn(a)\frac{\pi}{4})\\ \notag 
 =&{} -\frac{4}{\sqrt{2}}\sin(\frac{n\pi (1-a)}{2})\Big[\sin(n\Theta_{B_1}(k_1, a) - \frac{n\pi(1-a)}{2}) \\
 & {}+\sgn(a)\cos(n\Theta_{B_1}(k_1, a) - \frac{n\pi(1-a)}{2}) \Big], 
\end{split}
\end{equation*}
while the oscillation term of \eqref{eqns: case_1_2nd_part} is given by 
\begin{equation*}
\begin{split}
  & 2 \cos (n\Theta_{B_1}(k_1, a) + \sgn(a)\frac{\pi}{4}) +  2\cos (n\Theta_{B_1}(k_2, a)-\sgn(a)\frac{\pi}{4})\\  
   =&4 \cos (\frac{n\pi (1-a)}{2})\cos(n \Theta_{B_1}(k_1, a)-\frac{n\pi (1-a)}{2} +\sgn(a)\frac{\pi}{4})\\  
 =&\frac{4}{\sqrt{2}}\cos(\frac{n\pi (1-a)}{2})\Big[\cos(n\Theta_{B_1}(k_1, a) - \frac{n\pi(1-a)}{2}) \\ & \qquad \qquad \qquad \qquad \qquad \qquad {} - \sgn(a)\sin(n\Theta_{B_1}(k_1, a)  - \frac{n\pi(1-a)}{2})\Big].  
\end{split}
\end{equation*}
Therefore, $e^{n\theta}\sum_{j=1, 2} I_n(1, an, B_j))$ is in turn asymptotically equivalent to: 
\begin{equation}
\begin{split}\label{eqns: full_single_stationary_phase_case1}
\quad &i\frac{\sgn(a)\sqrt{\cos^2 \theta - a^2}\sec\theta}{2\sqrt{n}} \sqrt{f_K(a; \theta)} \sin(\frac{n\pi (1-a)}{2}) \\ 
    \quad &\quad \times \Big[\sin(n\Theta_{B_1}(k_1, a) - \frac{n\pi(1-a)}{2}) +\sgn(a)\cos(n\Theta_{B_1}(k_1, a) - \frac{n\pi(1-a)}{2})\Big]\\ 
    \quad &+\frac{a \tan \theta}{2\sqrt{n}}\sqrt{f_K(a; \theta)} \cos(\frac{n\pi (1-a)}{2}) \\
    \quad &\quad \times \Big[\cos(n\Theta_{B_1}(k_1, a) - \frac{n\pi(1-a)}{2}) - \sgn(a)\sin(n\Theta_{B_1}(k_1, a) - \frac{n\pi(1-a)}{2})\Big]. 
\end{split}
\end{equation}
Taking modulus square on \eqref{eqns: full_single_stationary_phase_case1} gives the asymptotic equivalence of $e^{2n\theta}\lvert{\sum_{j=1, 2}I_n(1, an, B_j)}\rvert$: 
\begin{align}\label{eqns: full_single_stationary_phase_case1_square}
    &\frac{(\cos^2 \theta - a^2)\sec^2\theta}{4n } f_K(a; \theta)\\ \notag 
    &\times \frac{1}{2}(1-\cos(n\pi(1-a)))[1+\sgn (a)\sin (2n\Theta_{B_1}(k_1, a)-n\pi(1-a))]
    \\ \notag 
    &+\frac{a^2 \tan^2 \theta}{4n}f_K(a; \theta)\\
    &\times \frac{1}{2}(1+\cos(n\pi(1-a)))[1-\sgn (a)\sin (2n\Theta_{B_1}(k_1, a)-n\pi(1-a))]. \notag 
\end{align}
Note that integrals preserve asymptotic equivalence just like the previous case \eqref{eqns: case_0_integral_asymptotic_computation}.  Using the Riemann-Lebesgue Lemma, we have similar to the formal computations in \eqref{eqns: case_0_integral_asymptotic_computation} that:
\begin{align*}
   & \lim_{n\to \infty}n \int _{\frac{1}{n}\floor{n y_1}}^{\frac{1}{n}\floor{n y_2}}e^{2n\theta}\abs{\sum_{j=1, 2}I_n(1, an, B_j)} \mathrm d a \\
   &= \int_{y_1}^{y_2}\frac{1}{8}f_K(a; \theta)((\cos^2\theta - a^2)\sec^2\theta + a^2 \tan^2\theta) \mathrm d a 
   =\frac{1}{8}\int_{y_1}^{y_2}(1-a^2)f_K(a; \theta) \mathrm d a.\notag 
\end{align*}
\subsubsection{Formal computation for \texorpdfstring{\eqref{eqns: integral_quantum_limit_cross}}{(3.36)}}
Using the asymptotic equivalence  obtained in the two equations \eqref{eqns: full_single_stationary_phase} and \eqref{eqns: full_single_stationary_phase_case1}, we can now deduce the asymptotic equivalence of the product $e^{2n\theta}(\sum_{j=1, 2}I_n(0, an, B_j))\overline{(\sum_{j=1, 2}I_n(1, an, B_j))}$ to be:
\begin{align*}
    &\frac{(1+a)a\tan\theta }{4n }f_K(a; \theta) [1-\sgn(a) \sin (2n\Theta_{B_1}(k_1, a))] \cos^2 \Big(\frac{n\pi(1-a)}{2}\Big)\\
    -&i\frac{(1+a)\sqrt{\cos^2\theta - a^2}\sec \theta}{4n}f_K(a; \theta) \cos(2n\Theta_{B_1}(k_1, a)-n\pi (1-a)). \notag 
\end{align*}
Again, by the preservation of asymptotic equivalence with integrals and using the Riemann-Lebesgue Lemma, we have similar to \eqref{eqns: case_0_integral_asymptotic_computation} that:
\begin{align*}
   &\lim_{n\to \infty}  n\int_{\frac{1}{n}\floor{ny_1}}^{\frac{1}{n}\floor{ny_2}}  e^{2n\theta}\left(\sum_{j=1, 2} I_n(0, an, B_j))(\overline{\sum_{j=1, 2} I_n(1, an, B_j)}\right)\mathrm d a \\
     &=  \frac{1}{8}\int_{y_1}^{y_2}(1+a)a\tan\theta f_K(a; \theta)\mathrm d a. \notag 
\end{align*}

\bibliographystyle{apalike}
\bibliography{bibtex}

\end{document}